\newcommand{\braces}[1]{\left\{ #1 \right \}}
\newcommand{\norm}[1]{\left \| #1 \right \|}
\newcommand{\conv}{\operatorname{conv}}
\DeclareMathOperator*{\argmin}{argmin}
\DeclareMathOperator*{\argmax}{argmax}
\newcommand{\TheTitle}{A parallelizable augmented Lagrangian method applied to large-scale non-convex-constrained optimization problems}
\begin{document}

\title{{\TheTitle}\thanks{This work was supported by the Australian Research Council (ARC) grant ARC DP140100985.}}
%\title{Insert your title here%\thanks{Grants or other notes
%%about the article that should go on the front page should be
%%placed here. General acknowledgments should be placed at the end of the article.}
%}
%\subtitle{Do you have a subtitle?\\ If so, write it here}

\titlerunning{A parallelizable augmented Lagrangian method...}        % if too long for running head

%\author{First Author         \and
%        Second Author %etc.
%}

\author{  
 Natashia Boland
    \and
  Jeffrey Christiansen
  \and
  Brian Dandurand
  \and
  Andrew Eberhard
  \and
  Fabricio Oliveira
}

%\authorrunning{Short form of author list} % if too long for running head

\institute{
%             \emph{Present address:} of F. Author  %  if needed
N. Boland \at Georgia Institute of Technology, Atlanta, , USA 
\and
J. Christiansen \at RMIT University, Melbourne, Victoria, Australia 
\and
B. Dandurand \at RMIT University, Melbourne, Victoria, Australia 
\and
A. Eberhard \at RMIT University, Melbourne, Victoria, Australia \\
              Tel.: +61-3-9925-2616\\
              Fax: +61-3-9925-1748\\
              \email{andy.eberhard@rmit.edu.au}           %  \\
\and
F. Oliveira \at RMIT University, Melbourne, Victoria, Australia 
}

%\institute{F. Author \at
%              first address \\
%              Tel.: +123-45-678910\\
%              Fax: +123-45-678910\\
%              \email{fauthor@example.com}           %  \\
%%             \emph{Present address:} of F. Author  %  if needed
%           \and
%           S. Author \at
%              second address
%}

\date{Received: date / Accepted: date}
% The correct dates will be entered by the editor

\maketitle

\begin{abstract}
We contribute improvements to a Lagrangian dual solution approach applied to large-scale optimization problems whose objective functions are convex,  continuously differentiable and possibly nonlinear, while the non-relaxed constraint set is compact but not necessarily convex. Such problems arise, for example, in the split-variable deterministic reformulation of stochastic mixed-integer optimization problems. The dual solution approach needs to address the nonconvexity of the non-relaxed constraint set while being efficiently implementable in parallel.
We adapt the augmented Lagrangian method framework to address the presence of nonconvexity in the non-relaxed constraint set and the need for efficient parallelization. The development of our approach is most naturally compared with the development of proximal bundle methods and especially with their use of serious step conditions. However, deviations from these developments allow for an improvement in efficiency with which parallelization can be utilized. 
Pivotal in our modification to the augmented Lagrangian method is the use of an integration of approaches based on the simplicial decomposition method (SDM) and the nonlinear block Gauss-Seidel (GS) method. 
An adaptation of a serious step condition associated with proximal bundle methods allows for the approximation tolerance to be automatically adjusted. 
Under mild conditions  optimal dual convergence is proven, and we report computational results on test instances from the stochastic optimization literature.
We demonstrate improvement in parallel speedup over a baseline parallel approach.

%\keywords{First keyword \and Second keyword \and More}
\keywords{augmented Lagrangian method \and proximal bundle method \and nonlinear block Gauss-Seidel method \and simplicial decomposition method \and parallel computing}
%proximal simplicial decomposition method, dual decomposition, mixed-integer optimization, large scale optimization
% \PACS{PACS code1 \and PACS code2 \and more}
% \subclass{MSC code1 \and MSC code2 \and more}
\subclass{90-08, 90C06, 90C11, 90C15, 90C25, 90C26, 90C30, 90C46}
%90-08, Computational methods 
%90C06, Large-scale problems
%90C11, Mixed-integer programming
%90C15, Stochastic programming
%90C25, Convex programming
%90C26, Nonconvex programming
%90C30, Nonlinear programming
%90C46 Optimality conditions, duality
\end{abstract}

%\section{Introduction}
%\label{intro}
%Your text comes here. Separate text sections with
%\section{Section title}
%\label{sec:1}
%Text with citations \cite{RefB} and \cite{RefJ}.
%\subsection{Subsection title}
%\label{sec:2}
%as required. Don't forget to give each section
%and subsection a unique label (see Sect.~\ref{sec:1}).
%\paragraph{Paragraph headings} Use paragraph headings as needed.
%\begin{equation}
%a^2+b^2=c^2
%\end{equation}
%
%% For one-column wide figures use
%\begin{figure}
%% Use the relevant command to insert your figure file.
%% For example, with the graphicx package use
%  %\includegraphics{example.eps}
%% figure caption is below the figure
%\caption{Please write your figure caption here}
%\label{fig:1}       % Give a unique label
%\end{figure}
%%
%% For two-column wide figures use
%\begin{figure*}
%% Use the relevant command to insert your figure file.
%% For example, with the graphicx package use
%  %\includegraphics[width=0.75\textwidth]{example.eps}
%% figure caption is below the figure
%\caption{Please write your figure caption here}
%\label{fig:2}       % Give a unique label
%\end{figure*}
%%
%% For tables use
%\begin{table}
%% table caption is above the table
%\caption{Please write your table caption here}
%\label{tab:1}       % Give a unique label
%% For LaTeX tables use
%\begin{tabular}{lll}
%\hline\noalign{\smallskip}
%first & second & third  \\
%\noalign{\smallskip}\hline\noalign{\smallskip}
%number & number & number \\
%number & number & number \\
%\noalign{\smallskip}\hline
%\end{tabular}
%\end{table}

\section{Introduction and Background}
%Motivated by applications in stochastic mixed-integer optimization, 
We develop a dual solution approach 
%which 1) benefits from the regularization of proximal point methods; and 2) is efficiently parallelizable in the sense of maximizing the computational work that can be parallelized, the memory usage that can be distributed, and minimizing the amount of parallel communication. %This sense of efficiency in parallelization is correlated with scalability, where increased parallel efficiency leads to improved benefit of using more processors.  
%Within the setting of proximal point methods, the nonconvexity of the constraint set brings up the issue of practical implementability, and the proximal term is the main source of impediment in the efficient parallelization.
%%while the issue of efficient parallelization is addressed by the integration of a nonlinear block Gauss-Seidel approach with the proximal point methodology while maintaining practical implementability. %We then specialize this approach to an application in stochastic mixed-integer programming for computing Lagrangian dual bounds efficiently and in parallel.
to
the problem of interest having the form
\begin{equation}\label{P1}
\zeta^*:=\min_{x,z} \braces{f(x) : Qx=z, x \in X, z \in Z},
\end{equation}
where $f$ is convex and continuously differentiable, $Q \in \mathbb{R}^{q \times n}$ is a block-diagonal matrix determining linear constraints $Qx=z$, $X\subset \mathbb{R}^n$ is a closed and bounded set, and $Z\subset \mathbb{R}^q$ is a linear subspace. The vector $x \in X$ of decision variables  is derived from the original decisions associated with a problem, while the vector $z \in Z$ of auxiliary variables are introduced to effect a decomposable structure in~\eqref{P1}. The block diagonal components of $Q$ are denoted $Q_i \in \mathbb{R}^{q_i \times n_i}$, $i=1,\dots,m$.
%Once the constraint $Qx=z$ is relaxed, the decomposable structure of~\eqref{P1} becomes evident due to the following structural assumptions:
%\begin{enumerate}
%\item $X=X_1 \times \cdots \times X_m \subset \mathbb{R}^{n_1} \times \cdots \times \mathbb{R}^{n_m}$ with $\sum_{i=1}^m n_i = n$, so that we may write $x=(x_1,\dots,x_m)$,
%\item $Q$ is a block diagonal matrix with block diagonal components $Q_i \in \mathbb{R}^{q_i \times n_i}$,
%\item $f(x)=\sum_{i=1}^m f_i(x_i)$ where $f_i : \mathbb{R}^{n_i} \mapsto \mathbb{R}$ are convex and continuously differentiable for $i=1,\dots,m$.
%\end{enumerate} 
Problem~\eqref{P1} is general enough to subsume, for example, the split-variable deterministic reformulation of a stochastic optimization problem with potentially multiple stages, as defined, for example, in~\cite{BirgeLouveaux2011}, while it can also model the case where $f$ is nonlinear (and convex) and/or $X$ is any compact (but not necessarily convex) set.

The Lagrangian dual function resulting from the relaxation of $Qx=z$ is 
\begin{equation}\label{PhiDefn}
\phi(\omega):= \min_{x,z} \braces{f(x) +\omega^\top( Qx-z), x \in X, z \in Z}. 
\end{equation}
Given that $X$ is compact and $f$ is continuous, in order for $-\infty < \phi(\omega)$ to hold,
it is necessary and sufficient that the following dual feasibility assumption be maintained:
\begin{equation}\label{EqDualFeasAssumpt}
\omega \in Z^\perp := \braces{\upsilon \in \mathbb{R}^{q} : \upsilon^\top z = 0 \;\text{for all}\; z \in Z},
\end{equation}
either by assumption or by construction. 
%Throughout this paper, the dual feasibility assumption $\omega \in Z^\perp$ is maintained, either by assumption or by construction. 
Under condition~\eqref{EqDualFeasAssumpt}, the $z$ term in definition~\eqref{PhiDefn} vanishes, and we may compute $\phi(\omega)=\min_{x} \braces{f(x) +\omega^\top Qx : x \in X}.$  Consequently, $\phi$ becomes separable as
$$
\phi(\omega) = \sum_{i=1}^m \phi_i (\omega_i)
$$
where $\phi_i(\omega_i) := \min_x \braces{f_i(x_i) + \omega_i^\top Q_i x_i : x_i \in X_i}$ and $\omega = (\omega_1,\dots,\omega_m) \in \mathbb{R}^{q_1} \times \cdots \times \mathbb{R}^{q_m}$ has a block structure compatible with the block diagonal structure of $Q$.
%While the above decomposable structure is important in motivation and in the implementations of the developed algorithm, its presence is not explicitly referred to in the subsequent convergence analysis of Section~\ref{Sect3}.
The Lagrangian dual problem is:
\begin{equation}\label{P2}
\zeta^{LD}:=\max_\omega \phi(\omega).
\end{equation}
In this paper, we develop, analyze, and apply an iterative solution approach to solving problem~\eqref{P2} subject to the following challenges:
\begin{description}
\item [{\bf Implementability:}] The set $X$ is not convex (for example, it may have mixed-integer constraints as part of its definition). 
Consequently the augmented Lagrangian method is not supported by the theory of proximal point methods. 
%(That is, practical implementability is absent.) 
%To recover this connection, knowledge of the convexifications of certain sets is required, and such knowledge is not available or computable a priori by assumption;
\item [{\bf Efficiency of parallelization:}]
%The problem associated with the definition of $\phi$ is of large scale, arising, for example, from a split-variable deterministic reformulation of a stochastic mixed-integer optimization problem. Therefore, 
The  solution approach should be amenable to efficient parallel computation, in the sense of maximizing the computational work that can be parallelized, the memory usage that can be distributed, and minimizing the amount of parallel communication. 
%Improvement in such efficiency leads to improvement in scalability, where the speedup due to the addition of processors is increased.
\end{description}

For the Lagrangian dual problem~\eqref{P2}, we note that the objective function $\phi$ is concave, even when $f$ and $X$ are not convex. 
We can apply a subgradient method (see e.g. \cite{Shor1985}; in textbooks~\cite{Bertsekas1999,Ruszczynski2006}) for solving~\eqref{P2} in an efficiently parallelizable manner. Such an approach is proposed in~\cite{caroe1999dual}.
However, it is preferable to make use of structural features of~\eqref{P2} that allow for smoothing or regularization, so that better convergence properties are realized. For this reason, we consider alternative developments based on proximal point methods that are modified to address both of the above two challenges.
 
%Given function $f : \mathbb{R}^n \mapsto \mathbb{R}$ and set $V \subseteq \mathbb{R}^n$, we define the function $f_V : \mathbb{R}^n \mapsto \mathbb{R}\cup \braces{\infty}$ by
%$$f_V := \left\{ \begin{array}{cl} f(x)& \text{if}\; x \in V \\ \infty &\text{otherwise}.\end{array}\right.$$
%Furthermore, we define for $s \in \mathbb{R}^n$ the conjugate function $f_V^*(s) := \sup_{x} \braces{s^\top x - f_V(x)}$ and the biconjugate function 
%$f_V^{**}(x) := \sup_s \braces{x^\top s - f_V^*(s)}$. It is well-known from convex analysis that when $f_V$ is proper, convex, and lower semicontinuous, then $f_V^{**}=f_V$.
%For practical implementations of the proximal point method, a well-known primal characterization of dual problem~\eqref{P2} is of interest,
%\begin{equation}\label{P1Conv}
%\zeta^{LD}=\min_{x,z} \braces{f_X^{**}(x) : Qx=z, z \in Z}.
%\end{equation}
%We assume that a computationally usable characterization of $f_X^{**}$ is not known a priori given that $X$ is not convex, and our developments need to address this issue.

As a starting point, we first consider the classical augmented Lagrangian method based on proximal point methods. 
%\subsection{The augmented Lagrangian method}
The augmented Lagrangian (AL) method (also known as the method of multipliers) is developed from proximal point methods, and references include~\cite{Hestenes1969,Powell1969,Bertsekas1982,Bertsekas1999}. 
The AL method typically has favorable convergence properties as a dual solution approach for convex problems (linear convergence rate under certain assumptions, see~\cite{Rockafellar1976MO,Bertsekas1982} and references cited therein). However, two issues arise: 1) 
the set $X$ is not convex, and so current theories of convergence are not applicable;
%a computationally practical  characterization of $f_X^{**}$ is required but is not available a priori; 
and 2) the primal subproblem associated with each iteration of the AL method is not separable due to the augmented Lagrange term, making efficient parallel implementations difficult to develop.

%\begin{remark}
%When $f$ is linear, then $f_X^{**} = f_{\conv(X)}$, and the only impediment to applying the iteration $k$ AL method  given by~\eqref{ALM3}--\eqref{ALM4} is the lack of an a priori known characterization of $\conv(X)$. Otherwise, for nonlinear $f$, we have $f_X^{**} \neq f_{\conv(X)}$ in general and the impediment of not having a known characterization of $f_X^{**}$ is even more severe.
%\end{remark}

We introduce modifications to the AL method that address both of these issues. In order to introduce computational tractability in light of the possible nonlinearity of $f$ and the nonconvexity of $X$, the modified AL method solves an alternative dual problem that can provide a weaker dual bound than that provided by the value of~\eqref{P2}.
In the case when $f$ is linear, the alternative dual problem is equivalent to~\eqref{P2}. This matter is explained in  more detail in Section~\ref{Sect3}.
The method that results from these modifications is most naturally compared with the proximal bundle method.%modifications to the AL method that lead to the proximal simplicial decomposition method.%, which is derived from proximal bundle methods in a manner analogous to the derivation of the AL method from the proximal point method.

%\subsection{The proximal simplicial decomposition method}

%The proximal simplicial decomposition method has been well-studied in its more well-known form of the proximal bundle method. We first provide a background of the the proximal bundle method and then turn the emphasis onto the simplicial decomposition form, as this is most apt for the comparison with our method.
The proximal bundle method initially appeared in~\cite{LemarechalWolfe1975}, and for a survey with history, see~\cite{OliveiraSagastizabal2014}. % and also textbooks such as~\cite{HirUrrLem1993,BonnansEtAl2006,Ruszczynski2006,Bertsekas2015Text}. % in Chapter 7 of~\cite{Ruszczynski2006} or Chapter 5 of~\cite{Bertsekas2015Text}.  
Use of inexact oracles for computing $\phi(\omega)$ and elements of the subdifferential set $\partial \phi(\omega)$ are studied 
in~\cite{OliveiraSagastizabal2014,OliveiraEtAl2014,HareEtAl2016} and references therein.
%in~\cite{CorreaLemarechal1993,Kiwiel1995,ZhaoLuh2002,KiwielLemarechal2009,OliveiraEtAl2011,OliveiraSagastizabal2014,OliveiraEtAl2014,HareEtAl2016}. %, the first two address the inexact vertex-finding and line-search subproblems in convex nonsmooth optimization, while the latter reference addresses the application to nonconvex problems and non-vanishing but bounded errors associated with the model functions constructed from a bundle. 
%An approach that develops a serious step condition not requiring monotonicity in the function value is developed in~\cite{AstorinoEtAl2013}.  
In its dual form, the bundle method may be referred to as the stabilized column generation method~\cite{BenAmor2009SCG} or the proximal simplicial decomposition method~\cite{Bertsekas2015}. In implementation, the developed algorithm more closely resembles the latter dual form.%It is this latter point of view that is of the most interest in this work.

For parallelization of the proximal bundle method, see~\cite{FischerHelmberg2014} and~\cite{LubinEtAl2013}. The approach developed in this paper is most naturally compared with~\cite{LubinEtAl2013}, as both approaches address the manner in which the same continuous master problem is approximately solved. The approach of~\cite{FischerHelmberg2014} uses a substantially different parallel computational paradigm based on subspace optimization. This approach, in which solution subspaces are assigned to processors based on periodically updated global state information, is not necessarily based on the problem's decomposable structure.

The proximal bundle method approach  requires modification for efficient parallelization. This matter is addressed in~\cite{LubinEtAl2013}, where a solution to the continuous master problem is obtained by primal dual interior point methods that exploit the decomposable structure present in the augmented Lagrangian term. We provide and analyze an alternative approach based on the use of:
\begin{enumerate}
\item the simplicial decomposition method (SDM)~\cite{Holloway1974,Hohenbalken1977,Bertsekas1999,Bertsekas2015Text}, which provides an alternative framework to the proximal bundle method to address the implementability of the proximal point method while allowing for the possibility that $f$ is nonlinear; and 
\item nonlinear block Gauss-Seidel (GS) %method~\cite{Hildreth1957,Warga1963,BezdekAtAl1987,GS2000,Tseng2001,BezdekHathaway2003,TsengYun2009,Bonettini2011,CassioliEtAl2013,DandurandWiecek2015} 
method~\cite{Hildreth1957,Warga1963,GS2000,Tseng2001,Bonettini2011} 
to approximate the solutions to the continuous master problem.
\end{enumerate}
Motivated by its constituent parts, the algorithm we develop is referred to as SDM-GS-ALM.

Algorithm SDM-GS-ALM addresses the solution to an alternative dual problem which is equivalent to~\eqref{P2} when $f$ is linear, but in general provides a weaker dual bound otherwise. This dual problem is used to address the more general setting where $f$ is convex but possibly nonlinear.%, so that $f_X^{**}$ never arises in the analysis or algorithm statement and need not be computed.

In an iteration of SDM-GS-ALM, the analog to the continuous master problem is not solved to (near) exactness; instead, approximate solutions based on possibly just one nonlinear block GS iteration are used. Due to the underlying need for convexification of the non-relaxed constraint set, implementability requires that the nonlinear block GS method must be integrated with the SDM so that optimal convergence of the resulting iterations can be established. In this way, a serious step condition similar to that used in proximal bundle methods is eventually satisfied after a finite number of such integrated SDM-GS iterations, and analogous dual optimal convergence of our approach is recovered even with the deviations from the proximal bundle method. In summary, we algorithmically integrate the AL method, the SDM, nonlinear block GS iterations, and the proximal bundle method serious step condition. A convergence analysis is also provided for SDM-GS-ALM. Such an integration allows for a considerable improvement in parallel efficiency with respect to maximizing the computational work that can be parallelized, the memory usage that can be distributed, and minimizing the amount of parallel communication.

%The SDM-GS-ALM algorithm also resembles 
%The alternating direction method of multipliers (ADMM)~\cite{GlowinskiMarrocco1975,GabayMercier1976} with inner approximated subproblem solutions, although the convergence analysis of the former is substantially different from that of the latter. The convergence analysis of SDM-GS-ALM naturally reflects a synthesis of SDM, the nonlinear block GS method, the proximal bundle method, and the AL method. In contrast, the convergence analysis of ADMM is most naturally developed in the context of the theory of maximal monotone operators and Douglas-Rachford  splitting methods,~\cite{EcksteinBertsekas1992,EcksteinYao2015} or as the proximal decomposition of the graph of a maximal monotone operator~\cite{MaheyEtAl1995}.

Other methods developed in the past that are related to aspects of our contribution  include the following. In terms of approximating within the AL method, we include reference to~\cite{Eckstein2003,EcksteinSilva2013}, where the research goal of developing implementable approximation criteria is addressed. 
The separable augmented Lagrangian (SALA) method~\cite{Hamdi1997}, which is an application of the alternating direction method of multipliers (ADMM)~\cite{GlowinskiMarrocco1975,GabayMercier1976,BoydEtAl2011} with a form of resource allocation decomposition and incorporates separability into the AL method. Other approaches to introducing separability into the AL method include~\cite{ChatzipanagiotisEtAl2014,TappendenEtAl2015}. Jacobi iterate approaches applied within either a proximal bundle method or an AL method framework are considered in~\cite{MulveyRuszczynski1992,Ruszczynski1995}; the accelerated distributed augmented Lagrangian method (ADAL) developed in~\cite{ChatzipanagiotisEtAl2014} is like a Jacobi-iterate analogue of ADMM with supporting convergence analysis. 
%(In~\cite{HeEtAl2016} it is shown that a Jacobi-iterate approach with a proximal term (needed for convergence) is actually equivalent to a formulation of two-block ADMM.) 
Other approaches to incorporating separability are found in the alternating linearization approaches~\cite{KiwielEtAl1999,LinEtAl2014} and the predictor corrector proximal multiplier (PCPM) %methods~\cite{ChenTeboulle1994,Tseng1997,KyonoFukushima2000,HeEtAl2002}. 
methods~\cite{ChenTeboulle1994,HeEtAl2002}. 
All of these methods provide implementable mechanisms for approximating primal subproblem solutions and effecting parallelism in a setting where $X$ is convex. However, they are not practically implementable in our setting where $X$ is not convex and its convex hull $\conv(X)$ is not given beforehand in a computationally useful closed-form description. 

Another recently developed algorithm, referred to as FW-PH~\cite{BolandEtAl2016}, is closely related to the SDM-GS-ALM algorithm developed in this paper. 
In terms of functionality, both appear as modifications to ADMM with inner approximated subproblem solutions.
While the algorithms differ only slightly in terms of functionality, there are substantial differences in the motivation and the convergence analysis. The convergence analysis of FW-PH interfaces with the convergence analysis for ADMM,
which is most naturally developed in the context of the theory of maximal monotone operators and Douglas-Rachford splitting methods~\cite{EcksteinBertsekas1992,EcksteinYao2015}, or as the proximal decomposition of the graph of a maximal monotone operator~\cite{MaheyEtAl1995}. In contrast, the convergence analysis of SDM-GS-ALM naturally reflects its synthesis of SDM, the nonlinear block GS method, the proximal bundle method, and the AL method. The convergence analysis of SDM-GS-ALM follows under more general assumptions than that for FW-PH. In particular, the convergence analysis of SDM-GS-ALM allows for trimming of the inner approximations, and it does not require the warm-starting required by FW-PH. The most important difference in functionality is due to the influence of ideas from proximal bundle methods in SDM-GS-ALM, where updates of $\omega$ are taken conditionally at each iteration, while such updates are taken unconditionally at each iteration of FW-PH. We shall see that these conditional updates help to mitigate performance problems that arise due to the seemingly inevitable use of suboptimal algorithm parameters.

In papers such as~\cite{GadeEtAl2016,FeizollahiEtAl2015}, ADMM is applied directly to the primal problem~\eqref{P1}. In both works, it is acknowledged that ADMM is not theoretically supported in optimal convergence due to the lack of convexity of $X$. Nevertheless,~\cite{GadeEtAl2016} reports the potential for Lagrangian dual bounds to be recovered at each iteration of ADMM even though it is applied to~\eqref{P1}. In~\cite{FeizollahiEtAl2015}, where ADMM is applied to nonconvex decentralized unit commitment problems, heuristic improvements to ADMM are introduced to address the lack of convexity due to the mixed-integer constraints. In contrast to both of these approaches, where ADMM is applied directly to the primal problem~\eqref{P1}, the approach developed in this paper, and its related approach~\cite{BolandEtAl2016}, both resemble ADMM but with application to a primal characterization of the dual problem.
In these two approaches, the challenge of not having an explicit form for this primal characterization is addressed. 

The remainder of the paper is organized as follows. In Section~\ref{Sect2}, a general algorithmic framework based on the AL method with approximate subproblem solutions is developed and analyzed. In Section~\ref{Sect3}, a specific implementation of the Section~\ref{Sect2} framework is posed based on the integration of SDM and GS methods, which addresses the aforementioned issues of implementability and efficiency of parallelization. In Section~\ref{Sect4}, computational experiments and their outcomes are described and interpreted. And at last, Section~\ref{SectConclusion} concludes the paper and provides avenues for future work. 

\section{An alternative AL approximation approach}\label{Sect2}
In the following development, we address the solution of a slightly different dual problem 
\begin{equation}\label{P2C}
\zeta^{CLD}:=\max_{\omega \in Z^\perp} \phi^C(\omega).
\end{equation}
based on the dual function $\phi^C\left( \omega \right) := \min_x \braces{ f(x) + \omega^\top Qx : x \in \conv(X)}$.
The only difference between $\phi^C$ and $\phi$ is the use of constraint set $\conv(X)$ in the former versus the use of $X$ in the latter. 
We assume as before that $\omega \in Z^\perp$.
\begin{remark}
Under the assumption that $\conv(X)$ is not known beforehand by any characterization, direct evaluation of $\phi^C$ or any of its subgradients at any $\omega \in Z^\perp$ is not possible. 
This dual function is not used in the proximal bundle method and is only treated indirectly in the current development.
%Nevertheless, the necessity of referring to  $\phi^C$  will become clear later in this section. %Also, problem~\eqref{P1ConvB} is the primal characterization to~\eqref{P2C}.
\end{remark}

The dual problem~\eqref{P2C} has the following primal characterization
\begin{equation}\label{P1ConvB}
\zeta^{CLD}=\min_{x,z} \braces{f(x) : Qx=z, x \in \conv(X), z \in Z},
\end{equation}
where $\conv(X)$ is the convex hull of $X$. In addition to generating a sequence $\braces{\omega^k}$ of dual solutions to~\eqref{P2C}, our algorithm will also generate a sequence of primal solutions $\braces{(x^k,z^k)}$ to~\eqref{P1ConvB}, and so reference to~\eqref{P1ConvB} will be useful. In applying the AL method to problem~\eqref{P1ConvB}, 
the continuous master problem for fixed $\omega \in Z^\perp$ takes the form
\begin{equation}\label{EqQMPConv}
\zeta_\rho^{AL}(\omega):=\min_{x,z} \braces{L_\rho(x,z,\omega), x \in \conv(X), z \in Z}
\end{equation}
where the augmented Lagrangian (AL) relaxes $Qx=z$ and is defined by
\begin{equation}\label{ALDefn}
L_\rho(x,z,\omega):= f(x) + \omega^\top Qx + \frac{\rho}{2}\norm{Qx-z}_2^2.
\end{equation}
\begin{lemma}\label{LemmaALOpt}
For any optimal solution $\omega^*$ to problem~\eqref{P2C}, we have $\zeta_\rho^{AL}(\omega^*) = \zeta^{CLD}$.
Additionally, any optimal solution $(x^*,z^*)$ to problem~\eqref{EqQMPConv} with $\omega=\omega^*$ is also optimal for problem~\eqref{P1ConvB}.
\end{lemma}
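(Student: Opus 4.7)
The plan is to prove the equality $\zeta_\rho^{AL}(\omega^*) = \zeta^{CLD}$ by two inequalities, then derive the second claim from tightness in the $\geq$ direction. Throughout I would use two key observations: (i) since $\omega^* \in Z^\perp$ and $z \in Z$, the cross term $\omega^{*\top}(Qx - z)$ equals $\omega^{*\top}Qx$; and (ii) strong duality holds between the convex primal~\eqref{P1ConvB} and its Lagrangian dual~\eqref{P2C}, since $f$ is convex and continuous, $\conv(X)$ is convex and compact, and the coupling constraints $Qx=z$ together with $z \in Z$ are affine. In particular $\phi^C(\omega^*) = \zeta^{CLD}$.

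For the $\leq$ direction, I would take any primal optimal $(\bar x, \bar z)$ for~\eqref{P1ConvB}. Feasibility gives $Q\bar x = \bar z$, killing the penalty term $\tfrac{\rho}{2}\|Q\bar x - \bar z\|_2^2$, while $\bar z \in Z$ and $\omega^* \in Z^\perp$ give $\omega^{*\top} Q\bar x = \omega^{*\top}\bar z = 0$. Hence $L_\rho(\bar x, \bar z, \omega^*) = f(\bar x) = \zeta^{CLD}$, and since $(\bar x, \bar z)$ is feasible for~\eqref{EqQMPConv}, this yields $\zeta_\rho^{AL}(\omega^*) \leq \zeta^{CLD}$.

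For the $\geq$ direction, for an arbitrary feasible $(x,z)$ of~\eqref{EqQMPConv} I would drop the nonnegative penalty and then minimize out $x$:
\begin{equation*}
L_\rho(x,z,\omega^*) \;\geq\; f(x) + \omega^{*\top} Qx \;\geq\; \min_{y \in \conv(X)} \braces{f(y) + \omega^{*\top} Qy} \;=\; \phi^C(\omega^*) \;=\; \zeta^{CLD},
\end{equation*}
where the last equality uses strong duality at $\omega^*$. Taking the infimum over $(x,z)$ gives $\zeta_\rho^{AL}(\omega^*) \geq \zeta^{CLD}$, completing the first claim.

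For the second claim, let $(x^*,z^*)$ be optimal for~\eqref{EqQMPConv} with $\omega = \omega^*$. Then the chain of inequalities in the preceding display, evaluated at $(x^*,z^*)$, must all be tight: the penalty term must vanish, forcing $Qx^* = z^*$, so $(x^*,z^*)$ is feasible for~\eqref{P1ConvB}; and $x^*$ must attain the minimum defining $\phi^C(\omega^*)$, so $f(x^*) + \omega^{*\top}Qx^* = \zeta^{CLD}$. Using $\omega^{*\top}Qx^* = \omega^{*\top}z^* = 0$ then gives $f(x^*) = \zeta^{CLD}$, so $(x^*,z^*)$ is optimal for~\eqref{P1ConvB}. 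The only nontrivial ingredient above is the strong duality $\phi^C(\omega^*) = \zeta^{CLD}$, which is the main place the convex structure of the alternative dual (as opposed to~\eqref{P2}) is used; everything else is a direct manipulation exploiting $\omega^* \in Z^\perp$ and the nonnegativity of the augmented penalty.
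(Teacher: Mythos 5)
Your proof is correct, but for the first claim it takes a genuinely different route from the paper's. The paper establishes, for arbitrary $\overline{\omega}$, the proximal-point identity
\[
\max_{\omega\in Z^\perp}\braces{\phi^C(\omega)-\tfrac{1}{2\rho}\norm{\omega-\overline{\omega}}_2^2}=\min_{x,z}\braces{L_\rho(x,z,\overline{\omega}): x\in\conv(X),\ z\in Z},
\]
via a min--max interchange justified by Sion's theorem, and then substitutes $\overline{\omega}=\omega^*$; you instead sandwich $\zeta_\rho^{AL}(\omega^*)$ between two elementary inequalities, the upper bound by evaluating $L_\rho$ at a primal optimizer of~\eqref{P1ConvB} (which exists by compactness of $\conv(X)$) and the lower bound by discarding the nonnegative penalty and minimizing over $x$, both exploiting $\omega^*\in Z^\perp$. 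One small mislabeling: the equality $\phi^C(\omega^*)=\zeta^{CLD}$ in your lower bound is just the definition of $\zeta^{CLD}$ in~\eqref{P2C} together with the optimality of $\omega^*$, not strong duality; the step that genuinely relies on strong duality is the upper bound, where you need the optimal value of~\eqref{P1ConvB} to equal $\zeta^{CLD}$ --- a fact the paper asserts without proof when introducing~\eqref{P1ConvB} and also uses implicitly in its own argument for the second claim, so leaning on it is consistent with the paper's logical structure. What the paper's route buys is the proximal-point identity itself, which is the conceptual backbone of the ALM development in Section~\ref{Sect2}; what your route buys is elementarity (no minimax theorem) at the cost of depending on the unproved primal characterization. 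Your treatment of the second claim --- tightness of the chain forces $Qx^*=z^*$ and then $f(x^*)=\zeta^{CLD}$ --- matches the paper's argument in substance.
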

\begin{proof}
%Using the same line of reasoning as in the developments~\eqref{ALM0}--\eqref{ALM2} 
%with the substitution of $\conv(X)$ for $X$ and the observation that $f_{\conv(X)}^{**}=f_{\conv(X)}$, we have
We specialize developments in, e.g., Section 4 of~\cite{Rockafellar1976ALMethod} or Section 6.4.3 of~\cite{Ruszczynski2006}. 
Due to the convexity of $f$, $\conv(X)$, and $Z$, we may compute
\begin{align}
&\max_{\omega \in Z^\perp} \phi^C(\omega) - \frac{1}{2\rho}\norm{\omega-\overline{\omega}}_2^2 \label{ProxPtProb}\\
&=\max_{\omega \in Z^\perp} \min_{x}  \braces{f(x)+\omega^\top Qx  -\frac{1}{2\rho} \norm{\omega - \overline{\omega}}_2^2 : x \in \conv(X)} \nonumber \\
&= \min_{x}  \braces{\begin{array}{l} f(x)+\overline{\omega}^\top Qx \\+\max_{\omega \in Z^\perp}\braces{ (\omega - \overline{\omega})^\top Qx-\frac{1}{2\rho} \norm{\omega - \overline{\omega}}_2^2} : x \in \conv(X) \end{array}} \nonumber \\ 
&= \min_{x}  \braces{f(x)+\overline{\omega}^\top Qx   +\frac{\rho}{2}\min_z\braces{\norm{Qx-z}_2^2   :  z \in Z} : x \in \conv(X)} \nonumber \\
%&= \min_{x,z}  \braces{f(x)+\overline{\omega}^\top Qx   +\frac{\rho}{2}\norm{Qx-z}_2^2   : x \in \conv(X), z \in Z} \nonumber \\
&= \min_{x,z} \braces{L_\rho(x,z,\overline{\omega}), x \in \conv(X), z \in Z}. \label{RHSProb}
\end{align}
%\begin{align}
%\max_{\omega \in Z^\perp} \phi^C(\omega) - \frac{1}{2\rho}\norm{\omega-\overline{\omega}}_2^2 = \min_{x,z} \braces{L_\rho(x,z,\overline{\omega}), x \in \conv(X), z \in Z},
%\end{align}
The switching of $\min$ and $\max$ is justified by the Sion min-max theorem.
In substituting $\overline{\omega}=\omega^*$, the value of the left-hand side maximization problem~\eqref{ProxPtProb} is clearly $\zeta^{CLD}$, while the same substitution on the right-hand side~\eqref{RHSProb} yields the value $\zeta_\rho^{AL}(\omega^*)$, from which we see that $\zeta^{CLD}=\zeta_\rho^{AL}(\omega^*)$. To prove the last claim,
we note that $L_\rho(x^*,z^*,\omega^*) = \zeta^{CLD}$ implies that $\norm{Qx^*-z^*}_2^2=0$. Otherwise, $\phi^C(\omega^*) < \zeta^{CLD}$, contradicting the dual optimality of $\omega^*$. Thus, $(x^*,z^*)$ is feasible and optimal for problem~\eqref{P1ConvB}.
\end{proof}

%In the general case where $f$ is convex and nonlinear, $\zeta^{CLD} \le \zeta^{LD}$, and problem~\eqref{P1ConvB} is different from the earlier primal characterization~\eqref{P1Conv}. 
%But when $f$ is linear, problems~\eqref{P1Conv} and~\eqref{P1ConvB} are equivalent and $\zeta^{LD}=\zeta^{CLD}$. 

It is straightforward from the definitions that $\phi^C(\omega) \le \phi(\omega)$ for all dual feasible $\omega \in Z^\perp$. 
In the case when $f$ is linear, we have $\phi^C(\omega) = \phi(\omega)$ for all $\omega \in Z^\perp$ and so $\zeta^{LD}=\zeta^{CLD}$.
%and the primal characterizations~\eqref{P1Conv} and~\eqref{P1ConvB} of the respective dual problems are equivalent.
But in the general case where $f$ is nonlinear, the dual~\eqref{P2C} can be ``weaker'' than~\eqref{P2}, where $\zeta^{CLD} < \zeta^{LD}$ can occur,
which we see in the following example.
%, andthe primal characterizations ~\eqref{P1Conv} and~\eqref{P1ConvB} of the respective dual problems are not equivalent. 
Let $f : \mathbb{R}^2 \mapsto \mathbb{R}$ be defined by $f(x)=(x_1-0.5)^2+(x_2-0.5)^2$, $X=\braces{0,1} \times \braces{0,1}$, and let $Qx=z$ be defined to model the constraints $x_1 -z_1 = 0$ and $x_2 - z_2 = 0$ where $Z = \braces{(z_1,z_2) : z_1=z_2} \subset \mathbb{R}^2$. We see trivially that $\zeta^{CLD} = 0$, which is verified with the saddle point $x_1^*=x_2^*=z_1^*=z_2^*=0.5$ and $\omega^*=(0,0)$. However,
$\zeta^{LD} = 0.5$, which is verified with either of the saddle points $x_1^*=x_2^*=z_1^*=z_2^*=0$ and $\omega^*=(0,0)$, or $x_1^*=x_2^*=z_1^*=z_2^*=1$ and $\omega^*=(0,0)$. Thus, $\zeta^{CLD} < \zeta^{LD}$.

In the proximal bundle method, the dual function $\phi$ is approximated by a cutting plane model function which majorizes $\phi$. 
In the next development, we use the following approximation $\widehat{\phi} : \mathbb{R}^q \times \mathbb{R}^n \times \mathbb{R}^q \mapsto \mathbb{R}$ of $\phi^C$ centered at $(x^k,z^k)$, $k \ge 0$, in place the cutting plane model:
\begin{align*}
\widehat{\phi}(\omega,x^k,z^k) := L_\rho(x^k,z^k,\omega) + \frac{\rho}{2} \norm{Qx^k-z^k}_2^2.
\end{align*}
%where the augmented Lagrangian (AL) $L_\rho(x,z,\omega)$ is defined as in~\eqref{ALDefn}. 
This approximation satisfies the following bounding relationship.
\begin{lemma}\label{LemmaMajBound}
For each $(x^k,z^k)$, $k \ge 0$, such that the $z$-optimality condition is satisfied:
\begin{equation}\label{ZOptCond}
z^k \in \argmin_z \braces{\norm{Qx^k-z} : z \in Z},
\end{equation} 
we have for each $\omega \in Z^\perp$
\begin{equation}\label{MajBound}
\widehat{\phi}(\omega,x^k,z^k)  \ge \phi^C\left( \omega + \rho(Qx^k-z^k)\right).%                  \phi^C \left( \omega + \rho(Qx^k-z^k)\right).
\end{equation}
\end{lemma}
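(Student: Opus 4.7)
The plan is to upper-bound $\phi^C(\omega + \rho(Qx^k-z^k))$ by its value at the feasible point $x = x^k$ and then simplify using the orthogonality consequence of the $z$-optimality condition~\eqref{ZOptCond}.

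First I would unwind the definition of $\widehat{\phi}$, which combines the two quadratic penalty terms into one:
\begin{align*}
\widehat{\phi}(\omega,x^k,z^k) = f(x^k) + \omega^\top Qx^k + \rho\,\|Qx^k-z^k\|_2^2.
\end{align*}
Next I would verify that the perturbed multiplier $\omega' := \omega + \rho(Qx^k-z^k)$ still lies in $Z^\perp$, so that the value $\phi^C(\omega')$ is finite and the comparison in~\eqref{MajBound} is meaningful. Because $Z$ is a linear subspace, the $z$-optimality condition~\eqref{ZOptCond} characterizes $z^k$ as the orthogonal projection of $Qx^k$ onto $Z$, which forces the residual $Qx^k - z^k$ to be orthogonal to $Z$. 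Combined with $\omega \in Z^\perp$, this yields $\omega' \in Z^\perp$.

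The core step is then a one-line estimate: since $x^k \in \conv(X)$ (as it arises in the algorithm as a candidate solution to the continuous master problem), the definition of $\phi^C$ as a minimum gives
\begin{align*}
\phi^C(\omega') \le f(x^k) + (\omega')^\top Qx^k = f(x^k) + \omega^\top Qx^k + \rho (Qx^k-z^k)^\top Qx^k.
\end{align*}
To finish, I would exploit the orthogonality $(Qx^k-z^k)\perp z^k$ (a consequence of $Qx^k-z^k \in Z^\perp$ and $z^k \in Z$) to rewrite
\begin{align*}
(Qx^k-z^k)^\top Qx^k = (Qx^k-z^k)^\top(Qx^k - z^k) + (Qx^k-z^k)^\top z^k = \|Qx^k-z^k\|_2^2,
\end{align*}
substitution of which yields exactly $\widehat{\phi}(\omega,x^k,z^k)$ on the right, establishing~\eqref{MajBound}.

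There is really no serious obstacle here; the argument is essentially a projection identity dressed up inside the AL framework. The only subtlety worth flagging is the implicit assumption $x^k \in \conv(X)$, without which the upper bound on $\phi^C(\omega')$ by evaluation at $x^k$ is invalid. Since the iterates generated by the algorithm of Section~\ref{Sect3} are produced from the inner approximation of $\conv(X)$, this membership will hold by construction in all applications of the lemma.
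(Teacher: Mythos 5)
Your proof is correct and follows essentially the same route as the paper's: both arguments rest on the orthogonality $Qx^k - z^k \in Z^\perp$ supplied by the $z$-optimality condition~\eqref{ZOptCond} and on bounding the minimum defining $\phi^C\left(\omega + \rho(Qx^k-z^k)\right)$ by its objective value at the feasible point $x^k \in \conv(X)$, yielding the identity $\widehat{\phi}(\omega,x^k,z^k) = f(x^k) + \left[\omega+\rho(Qx^k-z^k)\right]^\top Qx^k$. The only difference is cosmetic: the paper first derives the intermediate inequality for all $(x,z) \in \conv(X)\times Z$ via convexity of the quadratic term and then substitutes $(x,z)=(x^k,z^k)$ (where that inequality is an equality), whereas you evaluate at $(x^k,z^k)$ directly; your flag that $x^k \in \conv(X)$ is needed is apt and matches the paper's implicit assumption.
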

\begin{proof}
Via convexity of the term $\norm{Qx-z}_2^2$ over $(x,z) \in \conv(X) \times Z$, we may write the following inequalities that hold for $(x,z) \in \conv(X) \times Z$ and a fixed $\omega \in Z^\perp$:
\begin{align}
L_\rho(x,z,\omega) &\ge f(x) + \omega^\top Qx + \frac{\rho}{2}\norm{Qx^k-z^k}_2^2 \nonumber \\
&\quad\quad\quad+ \rho(Qx^k-z^k)^\top(Qx-z) - \rho(Qx^k-z^k)^\top(Qx^k-z^k) \nonumber\\
&= f(x) + \omega^\top Qx - \frac{\rho}{2}\norm{Qx^k-z^k}_2^2 + \rho(Qx^k-z^k)^\top(Qx-z) \nonumber\\
\Longrightarrow  L_\rho(x,z,\omega) &+ \frac{\rho}{2}\norm{Qx^k-z^k}_2^2 \ge f(x) + \left[\omega+\rho(Qx^k-z^k)\right]^\top Qx \label{MajBoundEq0}\\
&\ge \min_x \braces{ f(x) + \left( \omega + \rho(Qx^k-z^k)\right)^\top Qx : x \in \conv(X)}. \label{MajBoundEq}
\end{align}
Note that the term $-\rho(Qx^k-z^k) z$ vanishes due to the optimality condition associated with~\eqref{ZOptCond}.
Inequality~\eqref{MajBound} follows from the inequalities~\eqref{MajBoundEq0}--\eqref{MajBoundEq} once the substitution $(x,z)=(x^k,z^k)$ and the definition of $\widehat{\phi}(\omega,x^k,z^k)$ are applied to the left-hand side of~\eqref{MajBoundEq0}.
\end{proof}

The convex hull $\conv(X)$ is not known explicitly, and so $\phi^C$ cannot be evaluated directly. 
Consequently, we additionally make use of the following minorization $\widecheck{\phi}$ of $\phi^C$ that can be evaluated.
For $x^k \in \conv(X)$, $k \ge 0$, define $\widecheck{\phi}(\omega,x^k)$ as follows:
\begin{equation}\label{PhiCheck}
\widecheck{\phi}(\omega,x^k) := \min_x \braces{f(x^k) + \nabla_x f(x^k)(x-x^k) + \omega^\top Qx : x \in X}.
\end{equation}
%Note that the $\conv$ is dropped due to the linear form of the objective function, and so the evaluation of $\widecheck{\phi}$ is practically implementable. 
Observe that, due to the linearity of the objective function with respect to $x$ in~\eqref{PhiCheck}, the use of constraint sets $X$ and $\conv(X)$ are interchangeable, 
and so in evaluating $\widecheck{\phi}$, an explicit description of  $\conv(X)$ is not required. 
Furthermore, from the definition of $\phi^C$, the convexity of $f$ over $\mathbb{R}^n$, and the interchangeability of $X$ and $\conv(X)$ in~\eqref{PhiCheck}, it is clear that for all $x^k \in \mathbb{R}^n$, $k \ge 0$, we have $\phi^C(\omega) \ge \widecheck{\phi}(\omega,x^k)$. Furthermore, when $f$ is linear, we have $\phi^C(\omega) \equiv \widecheck{\phi}(\omega,x^k)$ for all $x^k$, $k \ge 0$; the two functions collapse into the same function with the centering at $x^k$ of the latter function now  irrelevant. 

%\begin{remark}
%The two functions $\widehat{\phi}$ and $\widecheck{\phi}$ are proxies for $\phi^k$ and $\phi$, respectively, because their evaluations are implementable. 
%\end{remark}

The first important property of $(\omega,x) \mapsto \widecheck{\phi}(\omega,x)$ is its continuity. 
\begin{lemma}\label{PhiCheckLemma1}
Let $X$ be compact, and $f$ be continuously differentiable. Then $(\overline{\omega},\overline{x}) \mapsto \widecheck{\phi}(\overline{\omega},\overline{x})$ is continuous over $(\overline{\omega},\overline{x}) \in  Z^\perp \times \mathbb{R}^n $.
\end{lemma}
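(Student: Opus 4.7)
The plan is to recognize the claim as a standard parametric optimization continuity result. Define the integrand
\[
g(x;\overline{\omega},\overline{x}) := f(\overline{x}) + \nabla f(\overline{x})^\top (x-\overline{x}) + \overline{\omega}^\top Q x,
\]
so that $\widecheck{\phi}(\overline{\omega},\overline{x}) = \min_{x\in X} g(x;\overline{\omega},\overline{x})$. The feasible set $X$ is fixed (independent of the parameters $(\overline{\omega},\overline{x})$) and compact by hypothesis, so the minimum is attained. The claim will follow from Berge's maximum theorem once we check that $g$ is jointly continuous in $(x,\overline{\omega},\overline{x})$.

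First I would verify this joint continuity directly. Since $f$ is continuously differentiable on $\mathbb{R}^n$, both $\overline{x}\mapsto f(\overline{x})$ and $\overline{x}\mapsto \nabla f(\overline{x})$ are continuous, and the remaining pieces (inner products, the linear map $Q$, and the affine displacement $x-\overline{x}$) are polynomial in their arguments. Hence $g$ is continuous jointly in $(x,\overline{\omega},\overline{x})$ on $\mathbb{R}^n\times Z^\perp\times\mathbb{R}^n$.

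Given joint continuity on a product of a compact set $X$ with the parameter space $Z^\perp\times\mathbb{R}^n$, Berge's maximum theorem delivers continuity of the parametric minimum value function, which is exactly $\widecheck{\phi}$. If I wanted to avoid invoking Berge's theorem, the argument can be made elementary: fix $(\overline{\omega}_0,\overline{x}_0)$, pick a minimizer $x_0\in X$, and for any nearby $(\overline{\omega},\overline{x})$ use
\[
\widecheck{\phi}(\overline{\omega},\overline{x}) - \widecheck{\phi}(\overline{\omega}_0,\overline{x}_0) \le g(x_0;\overline{\omega},\overline{x}) - g(x_0;\overline{\omega}_0,\overline{x}_0),
\]
and symmetrically with a minimizer $x_1$ for $(\overline{\omega},\overline{x})$; then uniform continuity of $g$ on compact subsets of $\mathbb{R}^n\times Z^\perp\times\mathbb{R}^n$ restricted to $x\in X$ closes the argument.

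I do not anticipate a genuine obstacle: the only mild care needed is in writing the uniform-continuity bound, since the $\nabla f(\overline{x})^\top x$ term couples the variable $x$ with the parameter $\overline{x}$, but the compactness of $X$ bounds $\|x\|$ uniformly so that continuity of $\nabla f$ at $\overline{x}_0$ is enough to control the perturbation. Everything else is linear and poses no difficulty.
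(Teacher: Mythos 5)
Your proof is correct, but it takes a different route from the paper's. The paper rewrites $\widecheck{\phi}(\overline{\omega},\overline{x}) = f(\overline{x}) -\nabla_x f(\overline{x})\overline{x} - \delta_{\conv(X)}^*\bigl(-\bigl[ \nabla_x  f(\overline{x}) + \overline{\omega}^\top Q \bigr] \bigr)$, i.e., it expresses the minimization as the evaluation of the support function $\delta_{\conv(X)}^*$ of the compact convex set $\conv(X)$ at an argument depending continuously on $(\overline{\omega},\overline{x})$, and then invokes the convex-analytic fact that a convex function finite on all of $\mathbb{R}^n$ is continuous there. Continuity of $\widecheck{\phi}$ then follows by composition with the continuous maps $\overline{x}\mapsto f(\overline{x})$ and $\overline{x}\mapsto\nabla f(\overline{x})$. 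You instead treat $\widecheck{\phi}$ as a parametric value function over the fixed compact feasible set $X$ and apply Berge's maximum theorem (or, equivalently, the elementary two-sided minimizer bound with uniform continuity of the jointly continuous objective on compacts). Both arguments are sound. The paper's conjugate-function route is shorter given the convex-analysis toolkit it already uses elsewhere, and it isolates exactly where compactness enters (full domain of the support function); your route is more elementary and more general, since it never uses convexity of the objective in $x$ --- only joint continuity and compactness of $X$ --- and it would survive unchanged if the linearization of $f$ were replaced by any jointly continuous integrand. The one point needing care in your elementary variant, the coupling of $x$ with $\overline{x}$ through $\nabla f(\overline{x})^\top x$, you correctly dispose of via the uniform bound on $\|x\|$ over $X$.
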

\begin{proof}
From~\eqref{PhiCheck}, compute
\begin{align*}
\widecheck{\phi}(\overline{\omega},\overline{x}) &= f(\overline{x}) -\nabla_x f(\overline{x})\overline{x} + \min_x \braces{ \left[ \nabla_x  f(\overline{x}) + \overline{\omega}^\top Q \right] x  + \delta_{\conv(X)}(x)} \\
%&= f(\widetilde{x}) -\nabla_x f(\widetilde{x})\widetilde{x} - \sup_x \braces{ -\left[ \nabla_x  f(\widetilde{x}) + \widetilde{\omega}^\top Q \right] x  - \delta_{\conv(X)}(x) } \\
&= f(\overline{x}) -\nabla_x f(\overline{x})\overline{x} - \delta_{\conv(X)}^*\left(-\left[ \nabla_x  f(\overline{x}) + \overline{\omega}^\top Q \right] \right).
\end{align*}
where $\delta_{\conv(X)}(x):=\left\{\begin{array}{ll}0 & \text{if}\; x \in \conv(X)\\ \infty & \text{otherwise} \end{array} \right.$ is the indicator function on the set $\conv(X)$ and $\delta_{\conv(X)}^*$ is the conjugate function~\cite{Rockafellar1970} of $\delta_{\conv(X)}$.
As $\conv(X)$ is convex and compact, we see that $\delta_{\conv(X)}^*(\cdot)$ has domain $\mathbb{R}^n$ and is thus continuous over $\mathbb{R}^n$ (e.g., Lemma 2.91 of~\cite{Ruszczynski2006}), yielding the intended conclusion.
\end{proof}

The second property of $\widecheck{\phi}$ is its limiting behavior as the solutions $(x^k,z^k)$ approach certain critical values.

\begin{lemma}\label{PhiCheckLemma2}
Let the sequence $\braces{(x^k,z^k)} \subset \conv(X) \times Z$ satisfy the $z$-optimality condition~\eqref{ZOptCond} for each $k \ge 1$. 
%$$z^k \in \argmin_z \braces{ \norm{Qx^k-z} : z \in Z }$$ for each $k \ge 1$.
If, for some fixed $\omega \in Z^\perp$, the sequence  $\braces{(x^k,z^k)}$ converges optimally in the sense that 
$$\lim_{k \to \infty} (x^k,z^k) = (x^*,z^*) \in \argmin_{x,z} \braces{L_\rho(x,z,\omega) : x \in \conv(X), z \in Z},$$ then 
\begin{equation}\label{PhiCheckConv}
\lim_{k \to \infty} \widecheck{\phi}(\omega + \rho(Qx^k-z^k), x^k) = L_\rho(x^*,z^*,\omega) + \frac{\rho}{2} \norm{Qx^*-z^*}_2^2.
\end{equation}
\end{lemma}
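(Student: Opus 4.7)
The plan is to combine three ingredients: continuity of $\widecheck{\phi}$ (Lemma~\ref{PhiCheckLemma1}), the first-order optimality condition for $x^*$ in the AL subproblem, and the $z$-optimality condition inherited in the limit. I will first take the limit inside $\widecheck{\phi}$, and then show that the resulting value coincides algebraically with the right-hand side of~\eqref{PhiCheckConv}.

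First, since $\{(x^k,z^k)\}$ converges to $(x^*,z^*)$, the argument $\omega + \rho(Qx^k - z^k)$ converges to $\omega + \rho(Qx^* - z^*)$ in $\mathbb{R}^q$; note that $\omega + \rho(Qx^k-z^k) \in Z^\perp$ for each $k$ because $\omega \in Z^\perp$ and the $z$-optimality condition~\eqref{ZOptCond} forces $Qx^k - z^k \in Z^\perp$ (as $z^k$ is the projection of $Qx^k$ onto the subspace $Z$). By continuity of $\widecheck{\phi}$ on $Z^\perp \times \mathbb{R}^n$ established in Lemma~\ref{PhiCheckLemma1}, we obtain
\begin{equation*}
\lim_{k \to \infty} \widecheck{\phi}\bigl(\omega + \rho(Qx^k - z^k), x^k\bigr) = \widecheck{\phi}\bigl(\omega + \rho(Qx^* - z^*), x^*\bigr).
\end{equation*}

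Next, I will evaluate the right-hand side directly. Because $(x^*,z^*)$ minimizes $L_\rho(\cdot,\cdot,\omega)$ over $\conv(X) \times Z$, the first-order optimality condition for $x^*$ (using convexity of $f$ and $\conv(X)$) gives that $x^*$ is also a minimizer over $x \in \conv(X)$ of the linearization
\begin{equation*}
x \mapsto \nabla_x f(x^*)(x - x^*) + \bigl[\omega + \rho(Qx^* - z^*)\bigr]^\top Qx.
\end{equation*}
Since the objective inside the $\min$ defining $\widecheck{\phi}$ is linear in $x$, constraint sets $X$ and $\conv(X)$ are interchangeable (as noted after the definition~\eqref{PhiCheck}), and thus
\begin{equation*}
\widecheck{\phi}\bigl(\omega + \rho(Qx^* - z^*), x^*\bigr) = f(x^*) + \bigl[\omega + \rho(Qx^* - z^*)\bigr]^\top Qx^*.
\end{equation*}

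Finally, I will match this against the target. The right-hand side of~\eqref{PhiCheckConv} expands as
\begin{equation*}
L_\rho(x^*,z^*,\omega) + \tfrac{\rho}{2}\norm{Qx^*-z^*}_2^2 = f(x^*) + \omega^\top Qx^* + \rho(Qx^*-z^*)^\top(Qx^*-z^*).
\end{equation*}
Comparing with the expression obtained in the previous step, the difference is exactly $\rho(Qx^*-z^*)^\top z^*$, which vanishes because continuity of the projection onto the closed subspace $Z$ propagates condition~\eqref{ZOptCond} from the sequence to the limit, so $Qx^* - z^* \in Z^\perp$ and hence $(Qx^*-z^*)^\top z^* = 0$. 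Combining the three displays yields~\eqref{PhiCheckConv}.

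The main obstacle is really only bookkeeping: identifying precisely which term the $z$-optimality condition is needed to kill. Continuity gives the limit for free, and the optimality of $x^*$ collapses the inner minimum to an evaluation at $x^*$; the subtle point is that the algebraic identity between this evaluation and $L_\rho(x^*,z^*,\omega) + \tfrac{\rho}{2}\norm{Qx^*-z^*}_2^2$ requires the orthogonality $Qx^* - z^* \perp z^*$, which is exactly what hypothesis~\eqref{ZOptCond} supplies in the limit.
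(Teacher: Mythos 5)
Your proof is correct and follows essentially the same route as the paper's: both invoke the continuity of $\widecheck{\phi}$ from Lemma~\ref{PhiCheckLemma1}, use the first-order optimality condition for the $x$-block at $(x^*,z^*)$ to collapse the minimum defining $\widecheck{\phi}\bigl(\omega+\rho(Qx^*-z^*),x^*\bigr)$ to an evaluation at $x^*$, and use $Qx^*-z^*\in Z^\perp$ (inherited from~\eqref{ZOptCond} in the limit) to kill the term $\rho(Qx^*-z^*)^\top z^*$. The only difference is the order of operations (you pass to the limit first and then evaluate, the paper evaluates at the limit point and then appeals to continuity), which is immaterial.
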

\begin{proof}
We begin by writing the necessary (and sufficient) conditions associated with the optimality $(x^*,z^*) \in \argmin_{x,z} \braces{L_\rho(x,z,\omega) : x \in \conv(X), z \in Z}$:
$$
\left[\begin{array}{c} \nabla f(x^*) + [\omega + \rho(Qx^*-z^*)]^\top Q \\ -\rho (Qx^*-z^*) \end{array}\right] \left[\begin{array}{c} x-x^* \\ z - z^* \end{array}\right] \ge 0 \quad \text{for all}\; x \in \conv(X), z \in Z.
$$
Since $z^k \in \argmin_z \braces{ \norm{Qx^k-z} : z \in Z }$ for each $k \ge 1$, we have $Qx^k - z^k \in Z^\perp$, and so $Qx^* - z^* \in Z^\perp$ also. Thus, we can simplify the consideration of the above displayed necessary conditions to consider the $x$ block only:
$$
\left[\begin{array}{c} \nabla f(x^*) + [\omega + \rho(Qx^*-z^*)]^\top Q  \end{array}\right] \left[\begin{array}{c} x-x^* \end{array}\right] \ge 0 \quad \text{for all}\; x \in \conv(X),
$$
which implies
$$
\min_x \braces{\left[\begin{array}{c} \nabla f(x^*) + [\omega + \rho(Qx^*-z^*)]^\top Q  \end{array}\right] \left[\begin{array}{c} x-x^* \end{array}\right] : x \in \conv(X)} = 0.
$$
In terms of $\widecheck{\phi}(\omega + \rho(Qx^*-z^*), x^*)$, the above equality is re-written as:
\begin{align*}
\widecheck{\phi}(\omega + \rho(Qx^*-z^*), x^*) &= f(x^*) + (\omega)^\top Qx^* + \rho \norm{Qx^* - z^*}_2^2 \\
&= L_\rho(x^*,z^*,\omega) + \frac{\rho}{2} \norm{Qx^* - z^*}_2^2,
\end{align*}
where the equality $(Qx^*-z^*)^\top z^* = 0$ is utilized.
The continuity of $(\overline{\omega},\overline{x}) \mapsto \widecheck{\phi}(\overline{\omega},\overline{x})$ established in Lemma~\ref{PhiCheckLemma1} gives the desired conclusion.
\end{proof}

We use Lemmas~\ref{PhiCheckLemma1} and~\ref{PhiCheckLemma2} to develop a proximal bundle method-like serious step condition (SSC) that makes use of $\widehat{\phi}$ and $\widecheck{\phi}$ in place of the cutting plane model and $\phi$, respectively. Defining
$\widetilde{\omega}^k := \omega^k + \rho(Qx^k-z^k)$, consider the following modified serious step condition:
\begin{equation}\label{SSCnew}
\gamma \le \frac{\widecheck{\phi}(\widetilde{\omega}^k,x^k)-\widecheck{\phi}(\omega^k,x^{k-1})}{\widehat{\phi}(\omega^k,x^k,z^k)-\widecheck{\phi}(\omega^k,x^{k-1})} \le 1, 
\end{equation}
where $\gamma \in (0,1)$ is the SSC parameter. The upper bound of~\eqref{SSCnew} is satisfied automatically since $\widehat{\phi}(\omega^k,x^k,z^k) \ge \phi^C(\widetilde{\omega}^k) \ge \widecheck{\phi}(\widetilde{\omega}^k,x^k)$ holds by Lemma~\ref{LemmaMajBound} and the definition of $\widecheck{\phi}$. However, the satisfaction of the lower bound is conditional on $\gamma$.
%By way of comparison with~\eqref{SSC}, we have replaced 1) $\phi^k(\widetilde{\omega}^k)$ with $\widehat{\phi}(\omega^k,x^k,z^k)$,
%2) $\phi(\widetilde{\omega}^k)$ with $\widecheck{\phi}(\widetilde{\omega}^k,x^k)$, and 3) $\phi(\omega^k)$ with $\widecheck{\phi}(\omega^k,x^{k-1})$.

\begin{remark}
%The ``conditionality'' is due to the requirement that~\eqref{ZOptCond} hold, and the ``shiftedness'' is due to the use of $\omega + \rho(Qx^k-z^k)$ rather than $\omega$ as a dual multiplier on the right-hand side of~\eqref{MajBound}.
Throughout this paper, we shall always assume or construct $z^k$ such that the $z$-optimality condition~\eqref{ZOptCond} is satisfied for each $k \ge 0$.
Due to the necessary conditions of optimality associated with~\eqref{ZOptCond} and that $Z$ is a linear subspace,
we have $(Q{x}^k-{z}^k)^\top z = 0$ for all $z \in Z$. It immediately follows that if $\omega^k \in Z^\perp$, then $\widetilde{\omega}^k=\omega^k+\rho(Qx^k-z^k) \in Z^\perp$ also.
Thus, the satisfaction of the $z$-optimality condition~\eqref{ZOptCond} 
%preserves the legitimacy of the bound~\eqref{MajBound} and 
guides the generation of $\braces{\omega^k}$ so that if $\omega^0 \in Z^\perp$, then $\omega^k \in Z^\perp$ is always maintained for each $k \ge 1$. 
\end{remark}

Under certain circumstances, the denominator of the ratio displayed in~\eqref{SSCnew} can be zero. The following lemma states that this never happens when $\omega^k$ is \emph{not} dual optimal with respect to the dual problem~\eqref{P2C}.
\begin{lemma}\label{LemmaDenNotZero}
For any $\omega \in Z^\perp$ that is not dual optimal with respect to the dual problem~\eqref{P2C} and $(x,z) \in \conv(X) \times Z$, we have 
\begin{equation}\label{EqDenNotZero}
\widehat{\phi}(\omega,x,z)-\phi^C(\omega) > 0.
\end{equation}
Consequently, at any iteration $k$, the denominator of the ratio displayed in~\eqref{SSCnew} cannot be zero when $\omega^k$ is not dual optimal.
\end{lemma}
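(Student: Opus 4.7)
The plan is to establish the chain $\widehat{\phi}(\omega,x,z) \ge \zeta_\rho^{AL}(\omega) > \phi^C(\omega)$ whenever $\omega \in Z^\perp$ is not dual optimal, and then read off both~\eqref{EqDenNotZero} and the denominator claim for~\eqref{SSCnew}. The first inequality of the chain is essentially definitional: $\widehat{\phi}(\omega,x,z)$ differs from $L_\rho(x,z,\omega)$ only by a nonnegative term, and $L_\rho(x,z,\omega) \ge \zeta_\rho^{AL}(\omega)$ because the latter is the minimum of $L_\rho(\cdot,\cdot,\omega)$ over $\conv(X) \times Z$. So all the real work is in the strict inequality $\zeta_\rho^{AL}(\omega) > \phi^C(\omega)$.

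For that step, I would first observe that the Sion min-max derivation in the proof of Lemma~\ref{LemmaALOpt}, running from~\eqref{ProxPtProb} to~\eqref{RHSProb}, does not in any way use dual optimality of its reference point; it holds verbatim for any $\overline{\omega} \in Z^\perp$. Specializing to $\overline{\omega}=\omega$ yields the identity $\zeta_\rho^{AL}(\omega) = \max_{\omega' \in Z^\perp}\{\phi^C(\omega') - \frac{1}{2\rho}\|\omega'-\omega\|_2^2\}$, and plugging $\omega' = \omega$ gives the weak inequality $\zeta_\rho^{AL}(\omega) \ge \phi^C(\omega)$. To upgrade to a strict inequality I would argue by contradiction: suppose equality held and let $(x^*,z^*)$ attain $\zeta_\rho^{AL}(\omega) = L_\rho(x^*,z^*,\omega) = \phi^C(\omega)$. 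Since $f(x^*) + \omega^\top Qx^* \ge \phi^C(\omega)$ merely because $x^* \in \conv(X)$, the penalty term $\frac{\rho}{2}\|Qx^*-z^*\|_2^2$ must vanish, so $Qx^* = z^*$ and $(x^*,z^*)$ is primal feasible for~\eqref{P1ConvB}. Using $\omega \in Z^\perp$ together with $z^* \in Z$ then eliminates $\omega^\top Qx^* = \omega^\top z^* = 0$, leaving $f(x^*) = \phi^C(\omega)$; weak duality then pinches $\phi^C(\omega) = \zeta^{CLD}$, contradicting the non-optimality of $\omega$.

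The denominator claim for~\eqref{SSCnew} is then immediate from~\eqref{EqDenNotZero} via the sandwich $\widecheck{\phi}(\omega^k,x^{k-1}) \le \phi^C(\omega^k) < \widehat{\phi}(\omega^k,x^k,z^k)$, where the first inequality uses the fact (noted just after~\eqref{PhiCheck}) that $\widecheck{\phi}$ is a minorant of $\phi^C$. The only delicate point I anticipate is correctly noting that the min-max derivation in Lemma~\ref{LemmaALOpt} is independent of dual optimality; routing the strict-inequality step through primal feasibility rather than through first-order conditions on the superdifferential of $\phi^C$ keeps the argument elementary and avoids any subgradient bookkeeping over the linear subspace $Z^\perp$.
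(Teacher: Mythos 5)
Your proposal is correct and follows essentially the same route as the paper: both reduce the claim to showing that the augmented Lagrangian minimum value strictly exceeds $\phi^C(\omega)$ when $\omega$ is not dual optimal, arguing by contradiction that equality would force $Qx^*=z^*$ and hence dual optimality of $\omega$. The only cosmetic differences are that you obtain the weak inequality $\zeta_\rho^{AL}(\omega)\ge\phi^C(\omega)$ via the min-max identity of Lemma~\ref{LemmaALOpt} (it also follows directly from dropping the nonnegative penalty term) and you make the final weak-duality pinch explicit where the paper simply invokes the saddle-point characterization.
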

\begin{proof}
By the definition of $\widehat{\phi}$ , we have
\begin{align*}
\widehat{\phi}(\omega,x,z)-\phi^C(\omega) & \ge L_\rho(x^*,z^*,\omega) +\frac{\rho}{2}\norm{Qx - z}_2^2 - \phi^C(\omega), 
\end{align*}
where $(x^*,z^*) \in \argmin_{x,z} \braces{L_\rho(x,z,\omega) : x \in \conv(X), z \in Z}$. (That is, we substitute $L_\rho(x,z,\omega)$ from the definition of $\widehat{\phi}$ with $L_\rho(x^*,z^*,\omega)$ to get the inequality.)
Now $L_\rho(x^*,z^*,\omega)- \phi^C(\omega) >0$ when $\omega$ is not dual optimal. Otherwise, if $L_\rho(x^*,z^*,\omega) = \phi^C(\omega)$, then $Qx^*=z^*$ must hold, and $(x^*,z^*,\omega)$ is a Lagrangian saddle point for problem~\eqref{P1ConvB} with respect to the Lagrangian relaxation of the constraint $Qx=z$.
This contradicts the non-dual optimality of $\omega$. Thus, the strict inequality~\eqref{EqDenNotZero} is established.

In the context of~\eqref{SSCnew} at iteration $k$, noting that $\phi^C(\omega^k) \ge \widecheck{\phi}(\omega^k,x^{k-1})$, we substitute $(x,z)=(x^k,z^k)$ and $\omega=\omega^k$ in the strict inequality~\eqref{EqDenNotZero} and so the denominator in~\eqref{SSCnew} is positive when $\omega^k$ is not dual optimal. 
\end{proof}

From Lemma~\ref{PhiCheckLemma2}, we have the following result regarding the satisfaction of condition~\eqref{SSCnew}.
\begin{proposition}\label{PropSSC}
Let the sequence 
$\braces{(x^k,z^k)} \subset \conv(X) \times Z$ satisfy 
$$z^k \in \argmin_z \braces{ \norm{Qx^k-z} : z \in Z }$$ for each $k \ge 1$.
Furthermore, let $\omega \in Z^\perp$ and $\omega \not\in \argmax_\omega \phi(\omega)$.
If the sequence  $\braces{(x^k,z^k)}$ converges optimally in the sense that 
$$\lim_{k \to \infty} (x^k,z^k) = (x^*,z^*) \in \argmin_{x,z} \braces{L_\rho(x,z,\omega) : x \in \conv(X), z \in Z},$$ 
then condition~\eqref{SSCnew} must be satisfied after a finite number of iterations.
\end{proposition}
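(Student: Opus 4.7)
The plan is to interpret the proposition as asserting that, when $\omega$ is held fixed and used as $\omega^k$ in the expression~\eqref{SSCnew}, the displayed ratio converges to $1$ as $k \to \infty$. Since the upper bound in~\eqref{SSCnew} holds unconditionally (as the paper already observes from Lemma~\ref{LemmaMajBound} and the definition of $\widecheck{\phi}$), demonstrating that the ratio tends to $1$ from below would, together with $\gamma \in (0,1)$, give that~\eqref{SSCnew} eventually holds.

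First I would analyze the three quantities in~\eqref{SSCnew} separately. For the numerator's first term $\widecheck{\phi}(\widetilde{\omega}^k,x^k)$, I would apply Lemma~\ref{PhiCheckLemma2} directly: the hypothesized optimal convergence of $\{(x^k,z^k)\}$ and the $z$-optimality assumption yield
\[
\widecheck{\phi}(\widetilde{\omega}^k,x^k) \longrightarrow L_\rho(x^*,z^*,\omega) + \tfrac{\rho}{2}\norm{Qx^*-z^*}_2^2 = \widehat{\phi}(\omega,x^*,z^*).
\]
For the denominator's first term $\widehat{\phi}(\omega,x^k,z^k)$, I would invoke joint continuity of $L_\rho$ and of $\norm{Qx-z}_2^2$ to conclude $\widehat{\phi}(\omega,x^k,z^k) \to \widehat{\phi}(\omega,x^*,z^*)$ as well. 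For the common subtracted term $\widecheck{\phi}(\omega,x^{k-1})$, I would use Lemma~\ref{PhiCheckLemma1} to conclude $\widecheck{\phi}(\omega,x^{k-1}) \to \widecheck{\phi}(\omega,x^*)$. Put together, both the numerator and the denominator of the ratio in~\eqref{SSCnew} converge to the same value $\widehat{\phi}(\omega,x^*,z^*) - \widecheck{\phi}(\omega,x^*)$.

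To deduce that the ratio converges to $1$ (rather than being $0/0$), I need this common limit to be strictly positive. Here is where I would invoke Lemma~\ref{LemmaDenNotZero}: since $\omega$ is assumed not to be dual optimal, that lemma gives $\widehat{\phi}(\omega,x^*,z^*) - \phi^C(\omega) > 0$, and by the definition of $\widecheck{\phi}$ together with the convexity of $f$ one has $\phi^C(\omega) \ge \widecheck{\phi}(\omega,x^*)$, so $\widehat{\phi}(\omega,x^*,z^*) - \widecheck{\phi}(\omega,x^*) > 0$. This strict positivity legitimizes passing to the limit in the ratio and concludes that it tends to $1$; consequently, for any fixed $\gamma \in (0,1)$, the lower inequality in~\eqref{SSCnew} must hold for all sufficiently large $k$.

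The main obstacle, conceptually, is ensuring the denominator limit is genuinely positive: the hypothesis is stated in terms of $\omega \notin \argmax \phi$, whereas Lemma~\ref{LemmaDenNotZero} is phrased in terms of dual optimality for $\phi^C$. I would either reconcile this by observing that the intended hypothesis is non-optimality for problem~\eqref{P2C} (consistent with the surrounding algorithmic framework that targets~\eqref{P2C}), or supply the short argument that if $\omega$ is not optimal for~\eqref{P2C}, then any optimizer $(x^*,z^*)$ of $L_\rho(\cdot,\cdot,\omega)$ over $\conv(X) \times Z$ cannot satisfy $Qx^* = z^*$ (otherwise $(x^*,z^*,\omega)$ would be a saddle point), so $\widehat{\phi}(\omega,x^*,z^*) > \phi^C(\omega) \ge \widecheck{\phi}(\omega,x^*)$. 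Once this gap is confirmed, the remaining steps are routine continuity arguments.
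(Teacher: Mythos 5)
Your proof is correct and follows essentially the same route as the paper's: both rest on the sandwich $\widehat{\phi}(\omega,x^k,z^k) \ge \phi^C(\widetilde{\omega}^k) \ge \widecheck{\phi}(\widetilde{\omega}^k,x^k)$, the convergence supplied by Lemma~\ref{PhiCheckLemma2} (plus continuity arguments), and the strict positivity of the denominator from Lemma~\ref{LemmaDenNotZero}. Your closing observation that the hypothesis is phrased as $\omega \notin \argmax_\omega \phi(\omega)$ while Lemma~\ref{LemmaDenNotZero} requires non-optimality for~\eqref{P2C} flags a genuine imprecision that the paper's own proof silently glosses over, and your proposed reconciliation (reading the hypothesis as non-optimality for~\eqref{P2C}, or arguing directly that a minimizer of $L_\rho(\cdot,\cdot,\omega)$ cannot satisfy $Qx^*=z^*$ when $\omega$ is not optimal for~\eqref{P2C}) is the right one.
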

\begin{proof}
For all $(x^k,z^k) \in \conv(X) \times Z$ with $z^k \in \argmin_z \braces{\norm{Qx^k-z}_2^2}$, we have 
\begin{align*}
\widehat{\phi}(\omega,x^k,z^k)&= L_\rho(x^k,z^k,\omega) + \frac{\rho}{2}\norm{Qx^k-z^k}_2^2 \\
&\ge \phi^C(\omega+\rho(Qx^k-z^k)) \ge \widecheck{\phi}(\omega+\rho(Qx^k-z^k),x^k),
\end{align*}
where the first inequality follows from the definition of $\widehat{\phi}$ and Lemma~\ref{LemmaMajBound}, and the second inequality follows readily from the definition of $\widecheck{\phi}$.
By the assumption that $\omega$ is not dual optimal, the denominator of~\eqref{SSCnew} cannot be zero by Lemma~\ref{LemmaDenNotZero}. 
It follows from the convergence in~\eqref{PhiCheckConv} implied by Lemma~\ref{PhiCheckLemma2} that the ratio in~\eqref{SSCnew} must approach 1, and so condition~\eqref{SSCnew} must be satisfied after a finite number of iterations.
\end{proof}
Consequently, unless the current $\omega^k$ is already dual optimal, there cannot be an infinite number of null-steps when using condition~\eqref{SSCnew}.
%Moreover, if no serious steps were taken and $\omega^k=\overline{\omega}$ became fixed after $k \ge \bar{k} \ge 0$, then $\braces{(x^k,z^k)}$ would converge optimally for the augmented Lagrangian subproblem~\eqref{EqQMPConv} with $\omega=\overline{\omega}$ via Lemma~\ref{PhiCheckLemma2} and Proposition~\ref{PropSSC}.

Algorithm~\ref{AlgDualAscent} provides a general framework for an AL method with approximate subproblem solutions. 
The inputs $f$, $Q$, $X$, and $Z$ specify the data associated with problem~\eqref{P1}; $\rho>0$ is the AL term coefficient; $\omega^0$ is an initial dual solution; $\gamma \in (0,1)$ is the parameter of the serious step condition~\eqref{SSCnew}; and $\epsilon > 0$ is a tolerance for termination. Algorithm~\ref{AlgDualAscent} will be given a specific implementation in the form of SDM-GS-ALM in Section~\ref{Sect3}. The convergence proof of Algorithm~\ref{AlgDualAscent} is based on standard ideas in the convergence proofs of the proximal bundle method such as found in Chapter 7 of~\cite{Ruszczynski2006}. %There are only a few places in the proof where the deviations from these ideas must be addressed.
\begin{algorithm}[H]
\caption{A general approximated ALM using a bundle method SSC. \label{AlgDualAscent}}
\begin{algorithmic}[1]
\State {\bf Preconditions:} $\omega^1 \in Z^\perp$, $\gamma \in (0,1)$.
\Function{ApproxALM}{$f$, $Q$, $X$, $Z$, $\rho$, $\omega^1$, $\gamma$, $\epsilon$, $k_{max}$}  
       \For{$k=1,2,\dots,k_{max}$}
       \State Solve approximately
       \State \quad$(x^k,z^k) \in \argmin_{x,z} \braces{L_\rho(x,z,\omega^k) : x \in \conv(X), z \in Z}$ \label{LineSP1a}
       such that 
       \State \quad\quad 1) $z ^k \in \argmin_z \braces{\norm{Qx^k - z}_2^2 : z \in Z}$ and \label{LineSP1b}
       \State \quad\quad 2) either  
       \State \quad\quad \quad\quad $\widehat{\phi}(\omega^k,x^k,z^k)-\widecheck{\phi}(\omega^k,x^{k-1}) \le \epsilon$ or \label{Alg2LineDenNearZero}
       \State \quad\quad \quad\quad $0 < \gamma \le \frac{\widecheck{\phi}\left(\omega^k + \rho(Qx^k-z^k),x^k\right)-\widecheck{\phi}(\omega^k,x^{k-1})}{\widehat{\phi}(\omega^k,x^k,z^k)-\widecheck{\phi}(\omega^k,x^{k-1})}$ \label{Alg2LineSSCholds}
%       \State set $\widetilde{\omega} \gets \omega^{k} + \rho (Qx^k-z^k)$
       \If{$\widehat{\phi}(\omega^k,x^k,z^k)- \widecheck{\phi}(\omega^k,x^{k-1}) \le \epsilon$}
       	\State {\bf return} $(x^k,z^k,\omega^k)$
       \Else
         \State set $\omega^{k+1} \gets \omega^{k} + \rho (Qx^k-z^k)$
       \EndIf
%       \If{$\frac{\phi(\widetilde{\omega})-\phi(\omega^k)}{L_\rho(x^k,z^k,\omega^k) + \frac{\rho}{2} \norm{Qx^k-z^k}_2^2-\phi(\omega^k)} \ge \gamma$}
       	
%       \Else
%       	\State set $\omega^{k+1} \gets \omega^{k}$
%       \EndIf
        \EndFor  
      \State \textbf{return}  $(x^k,z^k,\omega^{k+1})$
\EndFunction
\end{algorithmic}
\end{algorithm} 
\begin{proposition}\label{DualConvergence}
Assume that problem~\eqref{P2C} has an optimal dual solution $\omega^*$, and that for each $k \ge 1$, $\phi^C(\omega^k) < \phi^C(\omega^*)$.
If the sequence $\braces{\omega^k}$ of dual updates is generated with Algorithm~\ref{AlgDualAscent} with $\epsilon=0$ and $k_{max}=\infty$, then $\braces{\omega^k}$ converges, and 
$\lim_{k \to \infty} \widecheck{\phi}(\omega^k,x^{k-1}) = \zeta^{CLD}$ (and consequently $\lim_{k \to \infty} \phi^C(\omega^k) = \zeta^{CLD}$). 
Furthermore, $$\lim_{k \to \infty} \widehat{\phi}(\omega^k,x^k,z^k) = \zeta^{CLD},$$ and all limit points $(\bar{x},\bar{z})$ of the sequence $\braces{(x^k,z^k)}$ are optimal for problem~\eqref{P1ConvB}.
\end{proposition}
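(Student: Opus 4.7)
The plan is to adapt the standard convergence proof of the proximal bundle method (Chapter~7 of~\cite{Ruszczynski2006}) to the AL framework above, with $\widehat{\phi}$ playing the role of the cutting-plane majorant and $\widecheck{\phi}$ providing successive lower estimates of $\phi^C$. The assumption $\phi^C(\omega^k) < \phi^C(\omega^*)$ for every $k$, combined with Lemma~\ref{LemmaDenNotZero}, ensures that the denominator in the SSC on line~\ref{Alg2LineSSCholds} is always strictly positive and that the gap-small exit on line~\ref{Alg2LineDenNearZero} never triggers (recall $\epsilon=0$), so a legitimate dual update $\omega^{k+1}=\omega^k+\rho(Qx^k-z^k)$ occurs at every iteration and the SSC is in force with ratio at least $\gamma$.

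The first stage is to establish monotonicity and summability. From the SSC one gets
\[
\widecheck{\phi}(\omega^{k+1},x^k) - \widecheck{\phi}(\omega^k,x^{k-1}) \ge \gamma\bigl[\widehat{\phi}(\omega^k,x^k,z^k)-\widecheck{\phi}(\omega^k,x^{k-1})\bigr] \ge 0,
\]
so $\braces{\widecheck{\phi}(\omega^{k+1},x^k)}$ is nondecreasing, and since $\widecheck{\phi}(\omega,x)\le \phi^C(\omega)\le \zeta^{CLD}$ it is also bounded above, hence convergent. Telescoping the above inequality and using the uniform factor $\gamma>0$ forces $\widehat{\phi}(\omega^k,x^k,z^k) - \widecheck{\phi}(\omega^k,x^{k-1}) \to 0$.

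The second stage is a Fejer-type argument toward an optimal dual solution. In the exact AL method $Qx^k-z^k$ is a supergradient of the concave $\phi^C$ at $\omega^{k+1}$, and the classical three-point identity yields
\[
\norm{\omega^{k+1}-\omega^*}_2^2 \le \norm{\omega^k-\omega^*}_2^2 - \norm{\omega^{k+1}-\omega^k}_2^2 - 2\rho\bigl[\phi^C(\omega^*)-\phi^C(\omega^{k+1})\bigr].
\]
In the approximate setting the plan is to recover the same inequality up to a vanishing additive error: Lemma~\ref{LemmaMajBound} supplies $\widehat{\phi}(\omega^k,x^k,z^k) \ge \phi^C(\omega^{k+1})$, and together with the vanishing gap this lets one treat $(x^k,z^k)$ as an asymptotically exact AL minimizer. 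This would yield Fejer monotonicity of $\braces{\omega^k}$ with respect to $\omega^*$, boundedness of $\braces{\omega^k}$, summability of $\braces{\norm{\omega^{k+1}-\omega^k}_2^2}$, and $\phi^C(\omega^{k+1})\to \zeta^{CLD}$. Combining with the sandwich $\widecheck{\phi}(\omega^{k+1},x^k) \le \phi^C(\omega^{k+1}) \le \widehat{\phi}(\omega^k,x^k,z^k)$ and the vanishing gap then gives $\widecheck{\phi}(\omega^k,x^{k-1})\to \zeta^{CLD}$ and $\widehat{\phi}(\omega^k,x^k,z^k)\to \zeta^{CLD}$. Full convergence of $\braces{\omega^k}$, not merely of subsequences, then follows from Fejer monotonicity together with the fact that every limit point is dual optimal, via a standard Opial-style argument. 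For primal limit points, $\norm{Qx^k-z^k}_2 = \norm{\omega^{k+1}-\omega^k}_2/\rho \to 0$ implies that any cluster point $(\bar x,\bar z)$ of $\braces{(x^k,z^k)}$ satisfies $Q\bar x = \bar z \in Z$, and passing to the limit in $\widehat{\phi}(\omega^k,x^k,z^k) \to \zeta^{CLD}$ together with continuity of $f$ yields $f(\bar x) = \zeta^{CLD}$, so Lemma~\ref{LemmaALOpt} delivers optimality for~\eqref{P1ConvB}.

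The hardest part will be the Fejer-monotonicity step itself: in the exact AL method one has $Qx^k-z^k \in \partial \phi^C(\omega^{k+1})$ as a clean supergradient, whereas here $(x^k,z^k)$ is only an approximate AL minimizer and the supergradient property holds only up to an error controlled by the gap $\widehat{\phi}-\widecheck{\phi}$. Quantifying this error so that it remains summable and does not destroy either the Fejer inequality or the convergence $\phi^C(\omega^{k+1})\to\zeta^{CLD}$ is the technical heart of the argument, and this is where the SSC parameter $\gamma$, Lemma~\ref{LemmaMajBound}, and the continuity of $\widecheck{\phi}$ established in Lemma~\ref{PhiCheckLemma1} do the most work.
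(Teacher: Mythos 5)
Your outline is correct and lands on the same quantitative engine as the paper, but it packages the key step differently, and it is worth seeing how the paper sidesteps what you flag as the ``technical heart.'' You organize the argument as: (i) monotonicity of $\widecheck{\phi}(\omega^{k+1},x^k)$ plus telescoping to get summability of the gaps $\widehat{\phi}(\omega^k,x^k,z^k)-\widecheck{\phi}(\omega^k,x^{k-1})$, then (ii) a quasi-Fej\'er inequality built from an approximate supergradient of $\phi^C$ at $\omega^{k+1}$ with error $\varepsilon_k=\widehat{\phi}(\omega^k,x^k,z^k)-\phi^C(\omega^{k+1})$. The paper instead expands $\norm{\omega^{k+1}-\omega^*}_2^2$ directly and bounds the cross term $(\omega^k-\omega^*)^\top(Qx^k-z^k)$ via the elementary inequality $\phi^C(\omega^*)\le L_\rho(x^k,z^k,\omega^*)=L_\rho(x^k,z^k,\omega^k)+(\omega^*-\omega^k)^\top(Qx^k-z^k)$, which holds for \emph{any} feasible $(x^k,z^k)$ --- no supergradient property, exact or approximate, is invoked --- and then uses the SSC~\eqref{SSCnew} to dominate the resulting error term $L_\rho(x^k,z^k,\omega^k)+\tfrac{\rho}{2}\norm{Qx^k-z^k}_2^2-\widecheck{\phi}(\omega^k,x^{k-1})$ by $\tfrac{1}{\gamma}[\widecheck{\phi}(\omega^{k+1},x^k)-\widecheck{\phi}(\omega^k,x^{k-1})]$, which telescopes upon summation. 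This single summed inequality simultaneously delivers boundedness of $\braces{\omega^k}$, summability of $\phi^C(\omega^*)-\widecheck{\phi}(\omega^k,x^{k-1})$ (hence $\widecheck{\phi}(\omega^k,x^{k-1})\to\zeta^{CLD}$, which your stage (i) alone does not give --- it only shows the sequence converges to \emph{some} limit $\le\zeta^{CLD}$), and, by re-instantiating $\omega^*$ as a limit point, full convergence of $\braces{\omega^k}$. Your route does work: the error $\varepsilon_k$ you need is bounded by the gap, hence summable by your own stage (i), so the quasi-Fej\'er machinery goes through; but be aware that you need summability rather than mere vanishing of $\varepsilon_k$, and that the identity $f(x^k)+(\omega^{k+1})^\top Qx^k=\widehat{\phi}(\omega^k,x^k,z^k)$ (using $(Qx^k-z^k)^\top z^k=0$ from~\eqref{ZOptCond}) is what makes $Qx^k-z^k$ an $\varepsilon_k$-supergradient at $\omega^{k+1}$. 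The paper's direct substitution buys a shorter, self-contained proof; your framing buys a cleaner conceptual link to inexact proximal point theory. Your treatment of the primal limit points (via $\norm{Qx^k-z^k}\to 0$ and continuity) matches the paper's in substance.
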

\begin{proof}
Let $\omega^*$ be any dual optimal solution for problem~\eqref{P2C}.
For each iteration $k \ge 1$, write the following two relations:
\begin{align}
 \norm{\omega^{k+1}-\omega^{*}}_2^2  = & \norm{\omega^{k}-\omega^{*} + \rho(Qx^k-z^k)}_2^2 \nonumber \\
  = &\norm{\omega^{k}-\omega^{*}}_2^2  + 2\rho (Qx^k-z^k)^\top (\omega^k-\omega^*) + \rho^2 \norm{Qx^k-z^k}_2^2, \label{R1} \\
%\end{align}
%and
%\begin{align}
 \text{and } \quad \phi^C(\omega^*) %&= \phi^\rho(\omega^*) := \min_{x,z} \braces{L_\rho(x,z,\omega^*) : x \in \conv(X), z \in Z} \nonumber \\
\le & L_\rho(x^k,z^k,\omega^*) = L_\rho(x^k,z^k,\omega^k) + (\omega^*-\omega^k)^\top (Qx^k-z^k) \nonumber \\
\Longrightarrow \;& (\omega^k - \omega^*)^\top(Qx^k-z^k) \le L_\rho(x^k,z^k,\omega^k) - \phi^C(\omega^*).\label{R2}
\end{align}
Substituting the inequality~\eqref{R2} into equality~\eqref{R1}, we have
\begin{align}
\norm{\omega^{k+1}-\omega^{*}}_2^2 &\le \norm{\omega^{k}-\omega^{*}}_2^2 \nonumber \\
& \quad + 2\rho \left[ L_\rho(x^k,z^k,\omega^k) - \phi^C(\omega^*) \right] + \rho^2 \norm{Qx^k-z^k}_2^2 \\ 
&= \norm{\omega^{k}-\omega^{*}}_2^2 + 2\rho \left[ \widecheck{\phi}(\omega^k,x^{k-1}) - \phi^C(\omega^*) \right] \nonumber \\
&\quad + 2\rho \left[ L_\rho(x^k,z^k,\omega^k) +\frac{\rho}{2} \norm{Qx^k-z^k}_2^2- \widecheck{\phi}(\omega^k,x^{k-1}) \right]. \label{R3}
\end{align}
By assumption, for each $k \ge 1$, we have $\phi^C(\omega^k) < \phi^C(\omega^*)$, so by Lemma~\ref{LemmaDenNotZero} and $\epsilon=0$, the Line~\ref{Alg2LineDenNearZero} condition of Algorithm~\ref{AlgDualAscent} never holds. Thus, the Line~\ref{Alg2LineSSCholds} condition, which is equivalent to the satisfaction of condition~\eqref{SSCnew}, is satisfied for each $k \ge 1$. 
Rewriting~\eqref{SSCnew}, with the substitution $\widetilde{\omega}^k = \omega^{k+1}$, as
\begin{equation}\label{R4}
L_\rho(x^k,z^k,\omega^k) + \frac{\rho}{2} \norm{Qx^k-z^k}_2^2-\widecheck{\phi}(\omega^k,x^{k-1}) \le \frac{\widecheck{\phi}(\omega^{k+1},x^{k})-\widecheck{\phi}(\omega^k,x^{k-1})}{\gamma}
\end{equation}
and substituting~\eqref{R4} into~\eqref{R3}, we have
\begin{align}
\norm{\omega^{k+1}-\omega^{*}}_2^2 \le \norm{\omega^{k}-\omega^{*}}_2^2 &+ 2\rho \left[ \widecheck{\phi}(\omega^k,x^{k-1}) - \phi^C(\omega^*) \right] \nonumber\\
&+ \frac{2\rho}{\gamma} \left[ \widecheck{\phi}(\omega^{k+1},x^{k})-\widecheck{\phi}(\omega^k,x^{k-1}) \right]. \label{R5}
\end{align}
From~\eqref{R5}, we make the following three inferences: 1) that $\braces{\norm{\omega^k-\omega^*}}$ is bounded, 2) that $\sum_{k=1}^\infty \left[ \phi^C(\omega^*) - \phi^C(\omega^k) \right]$ is finite, and 3) that $\braces{\omega^k}$ converges. To establish these inferences, we sum the inequality~\eqref{R5} from $k=\ell,\dots,N$ for some integers $1 \le \ell \le N$
to get
\begin{align}
&2\rho \sum_{k=\ell}^N \left[ \phi^C(\omega^*) - \widecheck{\phi}(\omega^k,x^{k-1}) \right]  + \norm{\omega^{N+1}-\omega^{*}}_2^2 \nonumber\\
&\quad\quad\quad\quad\quad\quad\quad\quad \le \norm{\omega^{\ell}-\omega^{*}}_2^2  +  \frac{2\rho}{\gamma} \left[ \widecheck{\phi}(\omega^{N+1},x^{N})-\widecheck{\phi}(\omega^\ell,x^{\ell-1}) \right] \nonumber \\
\Longrightarrow \;\; &2\rho \sum_{k=\ell}^N \left[ \phi^C(\omega^*) - \widecheck{\phi}(\omega^k,x^{k-1}) \right]  + \norm{\omega^{N+1}-\omega^{*}}_2^2 \nonumber\\
&\quad\quad\quad\quad\quad\quad\quad\quad \le \norm{\omega^{\ell}-\omega^{*}}_2^2  +  \frac{2\rho}{\gamma} \left[ \phi^C(\omega^{*})-\widecheck{\phi}(\omega^\ell,x^{\ell-1}) \right] \label{R6}
\end{align}
where the last inequality is straightforward due to $\widecheck{\phi}(\omega^{N+1},x^{N}) \le \phi^C(\omega^{N+1}) \le \phi^C(\omega^*)$ implied by the optimality of $\omega^*$. Noting that each summand $\phi^C(\omega^*) - \widecheck{\phi}(\omega^k,x^{k-1})$ in the summation on the left-hand side of~\eqref{R6} is nonnegative, we have immediately from~\eqref{R6} that $\sum_{k=1}^\infty \left[ \phi^C(\omega^*) - \widecheck{\phi}(\omega^k,x^{k-1}) \right] < \infty$ and $\braces{(\omega^k - \omega^*)}$ is bounded, establishing the first two inferences from~\eqref{R5}. 
The validity of the first two inferences imply the boundedness of $\braces{\omega^k}$ and the convergence $\lim_{k \to \infty} \widecheck{\phi}(\omega^k,x^{k-1}) = \phi^C(\omega^*)$, respectively. The boundedness of $\braces{\omega^k}$ implies the existence of limit points, while the convergence $\lim_{k \to \infty} \widecheck{\phi}(\omega^k,x^{k-1}) = \phi^C(\omega^*)$ implies that all such limit points are dual optimal. It is straightforward from the bounding relationships 
$$\phi^C(\omega^*) > \phi^C(\omega^k) \ge \widecheck{\phi}(\omega^k,x^{k-1})$$ 
that $\lim_{k \to \infty} \phi^C(\omega^k) = \phi^C(\omega^*)$ also.

To establish the third assertion, that $\braces{\omega^k}$ in fact converges, we drop the summation from the left-hand side of~\eqref{R6},
\begin{equation}\label{R7}
\norm{\omega^{N+1}-\omega^{*}}_2^2 \le \norm{\omega^{\ell}-\omega^{*}}_2^2  +  \frac{2\rho}{\gamma} \left[ \phi^C(\omega^{*})-\widecheck{\phi}(\omega^\ell,x^{\ell-1}) \right],
\end{equation}
and note that the above analysis holds independent of the choice of dual optimal $\omega^*$. Since it was just shown that $\braces{\omega^k}$ has limit points, and that all such limit points are dual optimal, we now specify $\omega^*$ to be one of these limit points.  We then choose an appropriate $\ell$ for any $\varepsilon > 0$ so that the right-hand side of~\eqref{R7} is arbitrarily small, i.e.,
$$
\norm{\omega^{N+1}-\omega^{*}}_2^2 \le \varepsilon
$$
for all $N \ge \ell$.
Thus, $\lim_{k \to \infty} \omega^k = \omega^*$, and it is clear that the limit point $\omega^*$ of $\braces{\omega^k}$ is in fact unique.

To prove the last assertion, the satisfaction of~\eqref{SSCnew} is rewritten as
\begin{align*}
{\widecheck{\phi}({\omega}^{k+1},x^k)-\widecheck{\phi}(\omega^k,x^{k-1})} &\le {\widehat{\phi}(\omega^k,x^k,z^k)-\widecheck{\phi}(\omega^k,x^{k-1})} \\
 &\le \frac{1}{\gamma}  \left( {\widecheck{\phi}({\omega}^{k+1},x^k)-\widecheck{\phi}(\omega^k,x^{k-1})}\right).
\end{align*}
Due to the convergence $\lim_{k \to \infty} \widecheck{\phi}(\omega^k,x^{k-1}) = \zeta^{CLD}$,
we have on taking the limit as $k \to \infty$ of the last displayed inequalities that $\lim_{k \to \infty} \widehat{\phi}(\omega^k,x^k,z^k) = \zeta^{CLD}.$
In taking the limit points $(\bar{x},\bar{z},\omega^*)$ of the sequence $\braces{(x^k,z^k,\omega^k)}$, noting that the optimal value of problem~\eqref{EqQMPConv} with $\omega=\omega^*$ is $\zeta^{CLD}$ by Lemma~\ref{LemmaALOpt}, we have
$$
\zeta^{CLD} + \frac{\rho}{2}\norm{Q\bar{x}-\bar{z}}_2^2 \le L_\rho(\bar{x},\bar{z},\omega^*) + \frac{\rho}{2}\norm{Q\bar{x}-\bar{z}}_2^2 = \zeta^{CLD}.
$$
From this, it follows that $\norm{Q\bar{x}-\bar{z}}_2^2 = 0$ and $L_\rho(\bar{x},\bar{z},\omega^*) = \zeta^{CLD}$, and so $(\bar{x},\bar{z})$ must be feasible and furthermore optimal for~\eqref{P1ConvB}.
\end{proof}
%\begin{remark}
%The use of the weaker dual~\eqref{P2C} throughout the developments of this section is ultimately necessary for incorporating the use of SDM-GS, that we will describe in the next section, into an implementation of Algorithm~\ref{AlgDualAscent}.
%\end{remark}

\section{Main algorithm}\label{Sect3}
After integrating SDM and the nonlinear block Gauss-Seidel method, a practical implementation of Algorithm~\ref{AlgDualAscent} is provided in this section.
  
%For this, we need background in the simplicial decomposition method (SDM) and the nonlinear block Gauss-Seidel method. 
%In the following we consider the convergence of the following 
We consider the following general two-block problem
\begin{equation}\label{P-GS}
\min_{x,z} \braces{F(x,z) : x \in \conv(X), z \in Z}
\end{equation}
where $F : \mathbb{R}^{n} \times \mathbb{R}^{q} \mapsto \mathbb{R}$ is a continuously differentiable function, $\conv(X)$ and $Z$ are closed convex sets, and $\conv(X)$ is also bounded. ($Z$ can be more generally a convex set in this setting, not necessarily a linear (sub)space.) Additionally, we assume for each fixed $x \in \conv(X)$ that $z \mapsto F(x,z)$ is inf-compact. (That is, the set $\braces{z \in Z : F(x,z) \le \ell}$ is compact for all $x \in \conv(X)$ and $\ell \in \mathbb{R}$.)

Problem~\eqref{P-GS} is assumed to be feasible, bounded, and to have an optimal solution $(x^*,z^*)$. We shall utilize the following two-block nonlinear Gauss-Seidel (GS) method with the $x$ update approximated in a manner resembling an iteration of the SDM.

\begin{algorithm}[H]
\caption{An iteration of inner-approximated nonlinear Gauss-Seidel approach applied to problem~\eqref{P-GS}. \label{AlgBCD-XZ}}
\begin{algorithmic}[1]
\State Precondition: $\widetilde{x} \in \conv(X)$, $\widetilde{z} \in \argmin_z \braces{F(\widetilde{x},z) : z \in Z}$, $D \subseteq \conv(X)$
\Function{SDM-GS}{$F$, $X$, $Z$, $D$, $\widetilde{x}$, $\widetilde{z}$, $t_{max}$}  
        \For{$t=1,\dots,t_{max}$} \label{ForXYUpdateBegin}
           \State $\widetilde{x} \gets \argmin_x \braces{F(x,\widetilde{z}) : x \in D}$ \label{SDMGSXUpdate}
	\State $\widetilde{z} \gets \argmin_z \braces{F(\widetilde{x},z) : z \in Z}$ \label{SDMGSZUpdate}
        \EndFor \label{ForXYUpdateEnd}
	\State $\widehat{x} \in \argmin_x \braces{\nabla_x F(\widetilde{x},\widetilde{z}) (x - \widetilde{x}) : x \in X}$ \label{SCGDirFindingSP}
	 \State Reconstruct ${D}$ to be any set such that \label{LineDa}
	 \State \quad $\braces{\widetilde{x} + \alpha (\widehat{x} - \widetilde{x}) : \alpha \in [0,1]} \subseteq {D} \subseteq \conv(X)$ \label{LineDb}
	 \State Set $\Gamma \gets - \nabla_x F(\widetilde{x},\widetilde{z}) (\widehat{x} - \widetilde{x})$ \label{GammaLine}
      \State \textbf{return} $(\widetilde{x},\widetilde{z},{D}, \Gamma )$ 
\EndFunction
\end{algorithmic}
\end{algorithm} 

If the $z$ block update of Line~\ref{SDMGSZUpdate} is trivialized, such as by making it not actually appear in the definition of $F$, or by making $Z$ a singleton set, 
then Algorithm~\ref{AlgBCD-XZ} would be identical to SDM applied to problem~\eqref{P-GS} in which the $z$ block of variables correspondingly does not play any role.
On the other hand, if the $x$ update~\eqref{SDMGSXUpdate} is replaced with an update based on an exact minimization $\widetilde{x} \gets \argmin_x \braces{F(x,\widetilde{z}) : x \in \conv(X)}$ (so that the computations of Lines~\ref{SCGDirFindingSP}--\ref{GammaLine} and the returning of $D$ and $\Gamma$ can be skipped), then Algorithm~\ref{AlgBCD-XZ} would be equivalent to a more traditional two-block nonlinear Gauss-Seidel method. 
Different forms of approximation of the $x$ update, such as those resulting from gradient descent steps in $x$, are also considered in~\cite{HathawayBezdek1991,Bonettini2011}.

\begin{remark}
The main approach envisioned for constructing the inner approximation $D$ on Lines~\ref{LineDa}--\ref{LineDb} 
is to take $D \gets \conv(D \cup \braces{\widetilde{x},\widehat{x}})$. To implement this update of $D$, we need to save the points $\widehat{x}$ computed during previous calls to Algorithm~\ref{AlgBCD-XZ}. 
%We may save the previous points selectively, yielding an update strategy $D \gets \conv(\widetilde{D} \cup \braces{\widetilde{x},\widehat{x}})$
%where $\emptyset \subseteq \widetilde{D} \subset D$.
%of iteration $k$ is to take $D = \conv(\braces{x^{k},\widehat{x}^1,\dots,\widehat{x}^k}) \subseteq \conv(X)$, where $\widehat{x}^i$ is computed in Line~\ref{SCGDirFindingSP} of Algorithm~\ref{AlgBCD-XZ} at iterations $i=1,\dots,k$. We may drop any of the previous points $\braces{\widehat{x}^1,\dots,\widehat{x}^{k-1}}$ from this construction, or replace these points with aggregations based on taking convex combinations of subsets of these points. %; that is, the set whose convex hull is taken must include at least $x^k$ and $\widehat{x}^k$, and some subset of $\braces{\widehat{x}^1,\dots,\widehat{x}^{k-1}}$. 
%By (dual) analogy, this corresponds to dropping and/or aggregating some of the model-defining cuts during the proximal bundle method.
%where keeping too few cuts diminishes the accuracy of the model $\phi^k$ near the current $\omega^k$ and causes the need for more null steps, while keeping too many cuts leads to the master problem having a large number of constraints, many of which contribute very little or nothing to the accuracy of the model near the current $\omega^k$.
\end{remark}

We assume in the following proposition that Algorithm~\ref{AlgBCD-XZ} is applied iteratively in the sense that at iteration $k\ge 0$, we input $(\widetilde{x},\widetilde{z})=(x^k,z^k)$ and return $(\widetilde{x},\widetilde{z})=(x^{k+1},z^{k+1})$. Furthermore, at the same iteration $k$ call of Algorithm~\ref{AlgBCD-XZ}, we set $d^{k+1} = \widehat{x} - \widetilde{x}$ where $\widehat{x}$ and $\widetilde{x}$ are set as in Line~\ref{LineDb}. This provides a reference sequence of directions $\braces{d^k}$ necessary in the proof of the following proposition.
\begin{proposition}\label{PropGSSDM}
For problem~\eqref{P-GS}, let $F$ be convex and continuously differentiable, and let $\conv(X)$ and $Z$ be nonempty and convex, with $\conv(X)$ bounded and $z \mapsto F(x,z)$ inf-compact for each $x \in\conv(X)$. Then, for any $t_{max} \ge 1$, the sequence $\braces{(x^k,z^k)}$ generated by iterations of Algorithm~\ref{AlgBCD-XZ} has limit points $(\bar{x},\bar{z})$, each of which are optimal for problem~\eqref{P-GS}.
\end{proposition}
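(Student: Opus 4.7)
The proof plan combines a standard monotonicity argument for Gauss--Seidel block descent with a Frank--Wolfe-type argument showing that the stationarity gap $\Gamma$ vanishes along convergent subsequences. First I would show $\braces{F(x^k,z^k)}$ is non-increasing: within Algorithm~\ref{AlgBCD-XZ}, the update on Line~\ref{SDMGSXUpdate} minimizes $F(\cdot,\widetilde{z})$ over $D \subseteq \conv(X)$ while the update on Line~\ref{SDMGSZUpdate} minimizes $F(\widetilde{x},\cdot)$ over $Z$; neither can increase $F$. Since~\eqref{P-GS} admits an optimum, $F(x^k,z^k) \downarrow F^\infty$ for some finite $F^\infty$. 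Next, the iterates $\braces{x^k}$ lie in the compact set $\conv(X)$, and using inf-compactness together with joint continuity of $F$, a Berge-type argument (theorem of the maximum) shows that the correspondence $x \mapsto \argmin_z\braces{F(x,z) : z \in Z}$ is upper hemicontinuous with nonempty compact values on $\conv(X)$. Because the last inner update in the for-loop is a $z$-minimization we have $z^k \in \argmin_z F(x^k,z)$, and upper hemicontinuity on the compact domain $\conv(X)$ confines $\braces{z^k}$ to a compact set. Hence $\braces{(x^k,z^k)}$ has at least one limit point $(\bar{x},\bar{z})$, and $\bar{z} \in \argmin_z F(\bar{x},z)$ by passage to the limit in the argmin relation.

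The crux is to show that the Frank--Wolfe gap vanishes at $(\bar{x},\bar{z})$. Suppose, along a subsequence with $(x^{k_j+1},z^{k_j+1}) \to (\bar{x},\bar{z})$, one had $\liminf_j \Gamma^{k_j+1} = \eta > 0$. Entering call $k_j+1$ to Algorithm~\ref{AlgBCD-XZ}, the input set $D^{k_j+1}$ contains the segment from $x^{k_j+1}$ to the Frank--Wolfe vertex $\widehat{x}^{k_j+1}$ by the reconstruction on Lines~\ref{LineDa}--\ref{LineDb}. Since the sequence $\braces{(x^k,z^k)}$ lies in a compact region and $F$ is continuously differentiable, $\nabla_x F$ has a uniform modulus of continuity on that region; a classical Frank--Wolfe descent estimate combining convexity of $F(\cdot,z^{k_j+1})$ with this modulus of continuity produces a fixed step length $\alpha^\star \in (0,1]$ for which the first $x$-update over $D^{k_j+1}$ decreases $F(\cdot,z^{k_j+1})$ by at least some $\delta(\eta) > 0$ independent of $j$. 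Since subsequent inner updates are non-increasing, $F(x^{k_j+2},z^{k_j+2}) \le F(x^{k_j+1},z^{k_j+1}) - \delta(\eta)$, contradicting $F(x^k,z^k) \to F^\infty$. Hence $\Gamma^{k_j+1} \to 0$, and continuity of the Frank--Wolfe gap (theorem of the maximum applied to the linear subproblem on the compact set $X$) gives that the gap at $(\bar{x},\bar{z})$ is zero, i.e., $\nabla_x F(\bar{x},\bar{z})(x-\bar{x}) \ge 0$ for all $x \in \conv(X)$. Combined with $\bar{z} \in \argmin_z F(\bar{x},z)$ and convexity of $F$ on $\conv(X) \times Z$, the first-order optimality conditions for~\eqref{P-GS} hold at $(\bar{x},\bar{z})$, which by convexity implies global optimality.

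I expect the main obstacle to be the quantitative descent estimate of the preceding paragraph: isolating a step length $\alpha^\star$ that gives a decrease depending only on $\eta$ and a uniform modulus of continuity of $\nabla_x F$ on the compact region inhabited by the iterates, rather than requiring a Lipschitz modulus which is not assumed. A secondary subtlety is that the first $x$-update in call $k_j+1$ uses $z^{k_j+1}$ rather than the eventual $z^{k_j+2}$, so the estimate must first be proved at that first inner iterate and then chained through the remaining (non-increasing) inner updates to bound $F(x^{k_j+2},z^{k_j+2})$ itself.
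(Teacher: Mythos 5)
Your proof is correct, but it reaches $x$-stationarity by a genuinely different mechanism than the paper. The paper reduces Proposition~\ref{PropGSSDM} to the descent framework of Appendix~\ref{AppSDMGS}: it verifies the Direction, Gradient-Related, and Sufficient-Decrease assumptions for the directions $d^k=\widehat{x}-\widetilde{x}$ (the segment $\braces{\widetilde{x}+\alpha(\widehat{x}-\widetilde{x})}\subseteq D$ guarantees the next $x$-update dominates an Armijo step), and then Lemma~\ref{SCGXStatLemma} derives stationarity by contradiction using the failed Armijo test at step $\alpha^k/\beta$, the mean value theorem, and continuity of $(x,z,d)\mapsto F_x'(x,z;d)$ --- no quantitative decrease estimate is ever needed. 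You instead run a direct Frank--Wolfe-gap contradiction: if the gap stays bounded below by $\eta>0$ along a subsequence, a fixed step $\alpha^\star$ chosen from a uniform modulus of continuity of $\nabla_x F$ on the compact region of iterates yields a decrease $\delta(\eta)>0$ infinitely often, contradicting convergence of the monotone sequence $\braces{F(x^k,z^k)}$; this estimate needs only the fundamental theorem of calculus and uniform continuity on a compact set, so your worry about lacking a Lipschitz modulus is unfounded. Your route is more self-contained and makes the role of the segment $[\widetilde{x},\widehat{x}]\subseteq D$ quantitatively explicit, at the cost of carrying an explicit $\delta(\eta)$; the paper's route avoids all quantitative bookkeeping but imports the Armijo/gradient-related machinery (essentially Proposition 3.2 of~\cite{Bonettini2011}, reproved in the appendix for completeness). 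Both arguments share the same minor gloss on why $\braces{z^k}$ is confined to a compact set (your Berge-type localization needs the per-$x$ inf-compactness to act uniformly over $\conv(X)$, just as the paper's appeal to inf-compactness plus SDA does), and both handle the $z$-stationarity identically by passing the exact $z$-argmin relation to the limit.
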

\begin{proof}
In light of the convexity and continuous differentiablity of $F$ and the convexity of $\conv(X)$ and $Z$, it is sufficient to show that 
\begin{align}\label{SDMGSXStat}
\nabla_{x} F(\bar{x},\bar{z}) (x-\bar{x}) &\ge 0\quad \text{for all}\; x \in \conv(X) \\
%\end{equation}
%and 
%\begin{equation}
\text{and} \quad 
\nabla_{z} F(\bar{x},\bar{z}) (z-\bar{z}) &\ge 0\quad \text{for all}\;z \in Z.\label{SDMGSZStat}
\end{align}
As $\nabla_{z} F(x^k,z^k) (z-z^k) \ge 0$ for all $z \in Z$  holds for each $k \ge 1$ (this follows due to the optimality $z^k \in \argmin_z \braces{F(x^k,z) : z \in Z}$  that holds by construction) the satisfaction of the latter condition~\eqref{SDMGSZStat} is trivially established for any limit points $(\bar{x},\bar{z})$. It remains only to show the satisfaction of the $x$-stationarity condition~\eqref{SDMGSXStat}. This may be established by using Proposition 3.2 of~\cite{Bonettini2011} combined with the last sentence of Remark 3.3 from the same reference. 
But for the sake of explicitness, we use developments in Appendix~\ref{AppSDMGS} to show that~\eqref{SDMGSXStat} holds.

Note, for the sake of nontriviality, that $\nabla_{x} F(x^k,z^k) (x-x^k) \ge 0$ for all $x \in X$ is assumed \emph{not} to hold for any $k \ge 1$. 
Thus, for the reference sequence of directions $\braces{d^k}$ mentioned immediately before the statement of the proposition,
the Direction Assumption (DA) referred to in Appendix~\ref{AppSDMGS} holds.  Also, the Gradient Related Assumption (GRA) referred to in Appendix~\ref{AppSDMGS} is satisfied for this same $\braces{d^k}$ by Lemma~\ref{LemmaDir} therein. Due to the construction of $D$ in Line~\ref{LineDb} and setting $(x^{k+1},z^{k+1}) = (\widetilde{x},\widetilde{z})$ after the termination of the for loop of Lines~\ref{ForXYUpdateBegin}--\ref{ForXYUpdateEnd}, we have given $\braces{d^k}$ and any choice of $(\beta,\sigma) \in (0,1)$ the satisfaction of the Sufficient Decrease Assumption (SDA) referred to in Appendix~\ref{AppSDMGS}. It then follows from Lemma~\ref{SCGXStatLemma} of Appendix~\ref{AppSDMGS}  that limit points $(\bar{x},\bar{z})$ of $\braces{(x^k,z^k)}$ do exists, each of which satisfy the stationarity condition~\eqref{SDMGSXStat}.    %[For the sake of completeness, the necessary developments for a proof are given in Appendix~\ref{AppSDMGS} as ``Additional Material'' for reviewers. ]
\end{proof}

The method SDM-GS-ALM is now stated as Algorithm~\ref{AlgDualAscentGSSSC}, which uses Algorithm~\ref{AlgBCD-XZ} as a subroutine to provide a practical implementation of Algorithm~\ref{AlgDualAscent}
\begin{remark}
At the return of Algorithm~\ref{AlgBCD-XZ} in Line~\ref{ReturnSDMGS} of Algorithm~\ref{AlgDualAscentGSSSC}, we have
\begin{align*}
\Gamma &= -\nabla_x L_\rho({x}^{k},{z} ^{k},\omega^k) (\widehat{x} - x^k) \\&= -\left[ \nabla_x f(x^k) + \left( \omega^k + \rho(Qx^k-z^k)\right)^\top Q\right](\widehat{x}-x^k )
\end{align*}
where $\widehat{x}$ is computed on Line~\ref{SCGDirFindingSP} of Algorithm~\ref{AlgBCD-XZ}.
One may verify using this value of $\Gamma$, the equality $(Qx^k-z^k)^\top z^k = 0$ due to $z^k \in \argmin_z \braces{\norm{Qx^k-z}_2^2 : z \in Z}$, and the computation of $\widetilde{\phi}$ on Lines~\ref{ComputePhiTilde0} and~\ref{ComputePhiTilde} that for $k \ge 0$, 
$$\widetilde{\phi} =  L_{\rho}({x}^{k},{z} ^{k},\omega^k) + \frac{\rho}{2} \norm{Q{x}^{k}-{z}^{k}}_2^2 - \Gamma = \widecheck{\phi}\left(\omega^{k} + \rho (Q{x}^{k}-{z}^{k}),x^k \right).$$ 
 %Therefore, $\widetilde{\phi}$ is a provisional value for the next iteration value of $\widecheck{\phi}(\omega^{k}+ \rho (Q{x}^{k}-{z}^{k}),x^k)$.
\end{remark}

\begin{algorithm}[H]
\caption{A practical implementation of Algorithm~\ref{AlgDualAscent} based on the use of SDM-GS iterations. (SDM-GS is given as Algorithm~\ref{AlgBCD-XZ}.)\label{AlgDualAscentGSSSC}}
\begin{algorithmic}[1]
\State {\bf Preconditions:} $x^0 \in \conv(X)$, $z^0 \in Z$, $\omega^0 \in Z^\perp$, $D \subseteq \conv(X)$, $\gamma \in (0,1)$.
\Function{SDM-GS-ALM}{$f$, $Q$, $X$, $Z$, $D$, $\rho$, $x^0$, $z^0$, $\omega^0$, $\gamma$, $\epsilon$, $t_{max}$, $k_{max}$}  
%       \State $\widehat{x}^0 \gets \argmin_x \braces{ \nabla_x L_{\rho}({x}^{0},{z} ^{0},\omega^0) (x-x^0) :  x \in X}$
%       \State $D \gets \conv(D \cup \braces{\alpha x^0 +  (1-\alpha)\widehat{x}^0 : \alpha \in [0,1]})$
%       \State $\widecheck{\phi}^0 \gets L_{\rho}({x}^{0},{z} ^{0},\omega^0)+ \frac{\rho}{2} \norm{Q{x}^{0}-{z}^{0}}_2^2+\nabla_x L_{\rho}({x}^{0},{z} ^{0},\omega^0) (\widehat{x}^0-x^0)$
       %\State $\widecheck{\phi}^0 \gets -\infty$
       \State $(x^0,z^0,D,\Gamma) \gets$ SDM-GS($L_\rho(\cdot,\cdot,\omega^0)$, $X$, $Z$, $D$, $x^0$, $z^0$, $t_{max}$)
       \State $\widetilde{\phi} \gets  L_{\rho}({x}^{0},{z} ^{0},\omega^0) + \frac{\rho}{2} \norm{Q{x}^{0}-{z}^{0}}_2^2 - \Gamma$ \label{ComputePhiTilde0}
       \State set $\omega^{0} \gets \omega^{0} + \rho (Q{x}^{0}-{z}^{0})$, $\widecheck{\phi}^{0} \gets \widetilde{\phi}$
       \For{$k=1,2,\dots,k_{max}$}
       \State Initialize $\omega^k \gets \omega^{k-1}$, $\widecheck{\phi}^k \gets \widecheck{\phi}^{k-1}$ \Comment{(Default, null-step updates)}
       \State $(x^k,z^k,D,\Gamma) \gets$ SDM-GS($L_\rho(\cdot,\cdot,\omega^k)$, $X$, $Z$, $D$, $x^{k-1}$, $z^{k-1}$, $t_{max}$) \label{ReturnSDMGS}
       \If{$L_\rho({x}^{k},{z} ^{k},\omega^k) + \frac{\rho}{2} \norm{Q{x}^{k}-{z}^{k}}_2^2-\widecheck{\phi}^k \le \epsilon$} \label{Alg4BeginTermCond}
       	\State {\bf return} $(x^k,z^k,\omega^k,\widecheck{\phi}^k)$
       \EndIf \label{Alg4EndTermCond}
       \State $\widetilde{\phi} \gets  L_{\rho}({x}^{k},{z} ^{k},\omega^k) + \frac{\rho}{2} \norm{Q{x}^{k}-{z}^{k}}_2^2 - \Gamma$ \label{ComputePhiTilde}
       \State $\gamma^k \gets \frac{\widetilde{\phi} - \widecheck{\phi}^k}{L_\rho({x}^{k},{z}^{k}, \omega^k) + \frac{\rho}{2} \norm{Q{x}^{k}-{z} ^{k}}_2^2-\widecheck{\phi}^k}$ \label{ComputeCritVal}
       \If{$\gamma^k \ge \gamma$} \label{Alg4LineSSC}
         \State set $\omega^{k} \gets \omega^{k} + \rho (Q{x}^{k}-{z}^{k})$, $\widecheck{\phi}^{k} \gets \widetilde{\phi}$
        \EndIf
        \State Possibly update $\rho$, e.g., $\rho \gets \frac{1}{\min \braces{\max \braces{(2/\rho)(1-\gamma^k),1/(10\rho),10^{-4}}, 10/\rho}}$ as in~\cite{Kiwiel1995} \label{Alg4RhoUpdate}
        \EndFor  
      \State \textbf{return}  $(x^k,z^k,\omega^{k},\widecheck{\phi}^k)$
\EndFunction
\end{algorithmic}
\end{algorithm}

\begin{proposition}\label{PropFWBCD}
Let $\braces{(x^k,z^k,\omega^k)}$ be a sequence generated by Algorithm~\ref{AlgDualAscentGSSSC} applied to problem~\eqref{P1} with $X$ compact, $Z$ a linear subspace, $\omega^0 \in Z^{\perp}$, $\rho > 0$, $\gamma \in (0,1)$, $\epsilon=0$ and $k_{max}=\infty$. 
If there exists a dual optimal solution $\omega^*$ to the dual problem~\eqref{P2C}, then either
\begin{enumerate}
\item $\omega^k = \overline{\omega}$ is fixed and optimal for~\eqref{P2C} for $k \ge \bar{k}$ for some finite $\bar{k}$; or
%\item finite termination where, for some $\bar{k} \ge 1$,  $\omega^*=\omega^{\bar{k}}$ and $(x^{\bar{k}},z^{\bar{k}})$ is optimal for problem~\eqref{P1ConvB}; or 
\item $\omega^k$ is never optimal for~\eqref{P2C} for any finite $k \ge 1$, but $\lim_{k \to \infty} \omega^k = \overline{\omega}$ is optimal,
\end{enumerate}
and the sequence $\braces{(x^k,z^k)}$ has limit points $(\overline{x},\overline{z})$, each of which are optimal for problem~\eqref{P1ConvB}.
\end{proposition}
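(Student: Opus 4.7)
The plan is to realize Algorithm~\ref{AlgDualAscentGSSSC} as a concrete instantiation of Algorithm~\ref{AlgDualAscent} and then dichotomize on how many times the serious-step condition on Line~\ref{Alg4LineSSC} is actually satisfied. The key auxiliary tools to invoke are Proposition~\ref{DualConvergence} (for the case of infinitely many serious steps), Proposition~\ref{PropGSSDM} (for showing SDM-GS converges to an inner optimum when $\omega$ is held fixed), Proposition~\ref{PropSSC} (for showing the SSC must eventually hold whenever $\omega$ is not already dual optimal), and Lemma~\ref{LemmaDenNotZero} (for reading dual optimality out of the SSC denominator).

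For the case of only finitely many serious steps (corresponding to Case~1), I would further split. If the termination test of Line~\ref{Alg4BeginTermCond} fires at some finite $\bar{k}$, then with $\epsilon = 0$ we obtain $\widehat{\phi}(\omega^{\bar{k}}, x^{\bar{k}}, z^{\bar{k}}) \le \widecheck{\phi}^{\bar{k}}$, and since $\widecheck{\phi}^{\bar{k}} \le \phi^C(\omega^{\bar{k}})$ follows from the defining inequality $\widecheck{\phi} \le \phi^C$, the contrapositive of~\eqref{EqDenNotZero} immediately forces $\omega^{\bar{k}}$ to be dual optimal, and the algorithm returns with $\omega^k$ fixed at this value. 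If the algorithm does not terminate but takes no further serious steps after some $\bar{k}$, then $\omega^k \equiv \bar{\omega}$ for $k > \bar{k}$, and the successive SDM-GS calls --- all invoked with the same $\omega$, warm-started at $(x^{k-1}, z^{k-1})$, with the inner approximation $D$ enlarged so as to retain the fresh SDM direction by Lines~\ref{LineDa}--\ref{LineDb} --- collectively amount to an iterative application of Algorithm~\ref{AlgBCD-XZ}. Proposition~\ref{PropGSSDM} then supplies limit points of $\{(x^k, z^k)\}$ that are optimal for~\eqref{EqQMPConv} with $\omega = \bar{\omega}$, and Proposition~\ref{PropSSC} in turn forces the SSC to hold at some finite index unless $\bar{\omega}$ is already dual optimal --- so the null-step-forever hypothesis compels dual optimality of $\bar{\omega}$, and Lemma~\ref{LemmaALOpt} upgrades these inner-optimal limit points to optimal solutions of~\eqref{P1ConvB}.

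For the case of infinitely many serious steps (corresponding to Case~2), $\omega^k$ cannot be eventually constant. Under the additional hypothesis that $\omega^k$ is not dual optimal at any finite $k \ge 1$, the hypotheses of Proposition~\ref{DualConvergence} are met, and its conclusions transfer verbatim to deliver both $\omega^k \to \bar{\omega}$ dual optimal and the optimality of all limit points of $\{(x^k, z^k)\}$ for~\eqref{P1ConvB}. I expect the main obstacle to be reconciling the exclusivity of the two cases: if $\omega^{\bar{k}}$ were dual optimal at some intermediate finite $\bar{k}$ while serious steps persisted indefinitely thereafter, neither case would hold literally. I would address this by exploiting Lemma~\ref{LemmaALOpt} together with the majorization bound of Lemma~\ref{LemmaMajBound}: at a dual-optimal $\omega^{\bar{k}}$, the chain $\widehat{\phi}(\omega^{\bar{k}}, x^{\bar{k}}, z^{\bar{k}}) \ge \phi^C(\omega^{\bar{k}} + \rho(Qx^{\bar{k}} - z^{\bar{k}})) \ge \widecheck{\phi}(\omega^{\bar{k}} + \rho(Qx^{\bar{k}} - z^{\bar{k}}), x^{\bar{k}})$ combined with the saddle-point characterization in Lemma~\ref{LemmaALOpt} should force either SSC failure at $\bar{k}$ or the trivial update $Qx^{\bar{k}} = z^{\bar{k}}$, so the serious step cannot move $\omega^k$ off the dual-optimal set once it is reached, reducing the hybrid possibility back to Case~1.
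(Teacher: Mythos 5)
Your overall skeleton matches the paper's proof: dichotomize on whether finitely or infinitely many serious steps occur, use Proposition~\ref{PropGSSDM} together with Proposition~\ref{PropSSC} and Lemma~\ref{LemmaALOpt} in the finite case, and invoke Proposition~\ref{DualConvergence} in the infinite case. Your treatment of Case~1 (including the termination sub-case via the contrapositive of Lemma~\ref{LemmaDenNotZero}) is in line with, and somewhat more explicit than, what the paper writes.

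The genuine gap is in Case~2, where you assert that the conclusions of Proposition~\ref{DualConvergence} ``transfer verbatim.'' They do not: Proposition~\ref{DualConvergence} describes Algorithm~\ref{AlgDualAscent}, in which \emph{every} iteration performs the dual update, whereas in Algorithm~\ref{AlgDualAscentGSSSC} only the subsequence $\braces{k_i}$ of serious-step iterations fits that template. Applying Proposition~\ref{DualConvergence} to that subsequence yields $\omega^{k_i}\to\overline{\omega}$ (which extends to the full sequence because $\omega^k$ is constant between serious steps) and optimality of limit points of $\braces{(x^{k_i},z^{k_i})}$ --- but the proposition you are proving claims optimality of \emph{all} limit points of the full primal sequence, including the null-step iterates. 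The paper closes this with an argument you omit: since SDM-GS monotonically decreases $L_\rho(\cdot,\cdot,\omega^{k_{i+1}})$ across the null steps between consecutive serious steps, one has $L_\rho(x^{k_i},z^{k_i},\omega^{k_{i+1}}) \ge L_\rho(x^{j},z^{j},\omega^{j}) \ge L_\rho(x^{k_{i+1}},z^{k_{i+1}},\omega^{k_{i+1}})$ for $k_i < j \le k_{i+1}$, and combining this sandwich with the continuity of $L_\rho$, $Qx^{k_i}-z^{k_i}\to 0$, and $\lim_i L_\rho(x^{k_i},z^{k_i},\omega^{k_i})=\zeta^{CLD}$ gives $\lim_{k}L_\rho(x^k,z^k,\omega^k)=\zeta^{CLD}$ along the whole sequence, whence Lemma~\ref{LemmaALOpt} delivers optimality of every limit point. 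Without some version of this step your Case~2 does not establish the primal claim. Separately, your worry about the exhaustiveness of the two cases is legitimate, but your proposed resolution is not supported by the lemmas you cite: at a dual-optimal $\omega^{\bar k}$ with $Qx^{\bar k}\ne z^{\bar k}$ the SSC ratio can still exceed $\gamma$, so nothing forces the serious step to stay in the dual-optimal set. The paper sidesteps this by building the dichotomy into the statement of the proposition, so this is a remark on the result rather than a defect specific to your argument.
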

\begin{proof}
In the first case, Algorithm~\ref{AlgDualAscentGSSSC} never takes serious steps for iterations $k \ge \bar{k} \ge 1$, and so with $\omega^k=\overline{\omega}$ fixed for $k \ge \bar{k}$, 
Algorithm~\ref{AlgDualAscentGSSSC} iterations continue with the generation of $\braces{(x^k,z^k)}$ as generated by iterations of SDM-GS (Algorithm~\ref{AlgBCD-XZ}). 
By Proposition~\ref{PropGSSDM}, the sequence $\braces{(x^k,z^k)}$ has limit points $(\overline{x},\overline{z})$, each of which are optimal for problem~\eqref{EqQMPConv} with $\omega=\overline{\omega}$.
Then, by Lemma~\ref{LemmaALOpt}, $(\overline{x},\overline{z})$ is also optimal for problem~\eqref{P1ConvB} since $\overline{\omega}$ is optimal for~\eqref{P2C}.

In the second case where $\omega^k$ is never dual optimal for~\eqref{P1ConvB} for any finite $k \ge 1$, any serious step must be followed by a finite number of consecutive null-steps.
We consider the subsequence indices $\braces{k_i}_{i=1}^\infty$ where the update $\omega^{k_i+1}$ is obtained by a serious step. By Proposition~\ref{DualConvergence},
we have $\lim_{i \to \infty} \phi^C(\omega^{k_i+1}) =  \zeta^{CLD}$, and taking into account the null steps in between, we have also $\lim_{k} \phi^C(\omega^{k}) =  \zeta^{CLD}$. 
%
%Otherwise, $\omega^k=\overline{\omega}$ stays fixed, and Proposition~\ref{PropGSSDM} implies that the sequence $\braces{(x^k,z^k)}$ generated due to SDM-GS must have subsequences that are optimally convergent with respect to the continuous master problem~\eqref{EqQMPConv} with $\omega=\overline{\omega}$. Passing to one of these subsequences, 
%Proposition~\ref{PropSSC} implies that the serious step condition in Line~\ref{Alg4LineSSC} evaluates true after a finite number of iterations within the subsequence, a contradiction. Thus, 
%we have $\lim_{k \to \infty} \omega^k = \overline{\omega}$ is dual optimal from Proposition~\ref{DualConvergence}. 
% 
To prove the last claim, %we consider the subsequence $\braces{k_i}_{i=1}^\infty$ where the update $\omega^{k_i+1}$ is obtained by a serious step.
we note that $\omega^{j}=\omega^{k_{i+1}}$ for all integers $j$ such that $k_i < j \le k_{i+1}$ due to the taking of null steps.
From Proposition~\ref{DualConvergence}, we have that $\lim_{i \to \infty} L_\rho(x^{k_i},z^{k_i},\omega^{k_i}) = \zeta^{CLD}$.
By the continuity of $(x,z,\omega) \mapsto L_\rho(x,z,\omega)$, the convergences $\lim_{k \to \infty} \omega^k = \overline{\omega}$ and $\lim_{i \to \infty} Qx^{k_i}-z^{k_i} = 0$ (again, Proposition~\ref{DualConvergence}), we have
$\lim_{i \to \infty} L_\rho(x^{k_i},z^{k_i},\omega^{k_{i+1}}) = \zeta^{CLD}$ also. 
Next, at each $i$, and integers $j$ such that $k_i < j \le k_{i+1}$, observe that
$$
L_\rho(x^{k_i},z^{k_i},\omega^{k_{i+1}}) \ge L_\rho(x^{j},z^{j},\omega^{j}) \ge L_\rho(x^{k_{i+1}},z^{k_{i+1}},\omega^{k_{i+1}}).
$$
In taking the limit of the above inequality as $i \to \infty$, it becomes evident that $\lim_{k \to \infty} L_\rho(x^{k},z^{k},\omega^{k}) = \zeta^{CLD}$ in the original sequence also.
By the optimality of $\overline{\omega}$ for problem~\eqref{P2C}, we know from Lemma~\ref{LemmaALOpt} that $\zeta_\rho^{AL}(\overline{\omega})=\zeta^{CLD}$, and so
each limit point $(\overline{x},\overline{z})$ must be optimal for problem~\eqref{EqQMPConv} with $\omega=\overline{\omega}$. Furthermore, by Lemma~\ref{LemmaALOpt}, $(\overline{x},\overline{z})$ must also be optimal for problem~\eqref{P1ConvB}. (These limit points exist furthermore, due to the compactness of $\conv(X)$ and the continuous and closed-form expression that the unique solution $z^k \in \argmin_z \braces{\norm{Qx^k-z}_2^2 : z \in Z}$ has given $x^k \in \conv(X)$ when $Z$ is a linear subspace.)
%It also follows from Lemma~\ref{LemmaALOpt} that each limit point $(\overline{x},\overline{z})$ is optimal for problem~\eqref{P1ConvB} since $\overline{\omega}$ is dual optimal for~\eqref{P1ConvB}.
\end{proof}

\subsection{Parallelization and workload}
The opportunities for parallelization and distribution of the computational workload in SDM-GS-ALM, as stated in Algorithm~\ref{AlgDualAscentGSSSC}, are not immediately apparent.
This subsection explicitly indicates which update problems may be solved in parallel, and the nature of the required communication between the parallel computational nodes.

The bulk of computational work, parallelization, and parallel communication occurs within the SDM-GS method stated in Algorithm~\ref{AlgBCD-XZ}, where for the problems of interest, the following decomposable structures apply: $X=\prod_{i=1}^m X_i$, $D=\prod_{i=1}^m D_i$,  and $F(x,z)=\sum_{i=1}^m F(x_i,z)$. 
In the larger context of Algorithm~\ref{AlgDualAscentGSSSC}, the subproblem of Line~\ref{SDMGSXUpdate} in Algorithm~\ref{AlgBCD-XZ} can be solved in parallel given fixed $\widetilde{z} \in Z$ and $\omega \in Z^\perp$ along the block indices $i=1,\dots,m$ as
\begin{equation}\label{Alg4QMP}
\min_{x} \braces{ f_i(x) + (\omega_i)^\top Q_i x + \frac{\rho}{2} \norm{Q_i x - \widetilde{z}_i}_2^2 : x \in D_i},
\end{equation}
while the subproblem of Line~\ref{SCGDirFindingSP} is solved as
$$
\min_x \braces{ \nabla_x f_i(\widetilde{x}_i) + \left( \omega_i + \rho(Q_i \widetilde{x}_i - \widetilde{z}_i) \right)^\top Q_i x : x \in X_i}.
$$
\begin{remark}
In the setting where problem~\eqref{P1} is a large-scale mixed-integer linear optimization problem, the subproblems of Line~\ref{SDMGSXUpdate} are continuous convex quadratic optimization problems for each block $i=1,\dots,m$, which can be solved independently of one another and in parallel. In the same setting, the Line~\ref{SCGDirFindingSP} subproblems are mixed-integer optimization problems for each block $i=1,\dots,m$, which can also be solved independently of one another and in parallel.
Additionally, the reconstruction of $D$ occurring in Line~\ref{LineDb} can be done in parallel for each $D_i$ along the indices $i=1,\dots,m$.
\end{remark}

Parallel communication is needed for the computation of the $z$ update in Line~\ref{SDMGSZUpdate} in Algorithm~\ref{AlgBCD-XZ}. In the larger context of Algorithm~\ref{AlgDualAscentGSSSC}, this takes the form of solving
$$
\min_z \braces{\sum_{i=1}^m \norm{Q_i \widetilde{x}_i - z_i}_2^2 : z \in Z}.
$$
This is solved as an averaging that requires the reduce-sum type parallel communication.
The computation of values required to compute $\gamma^k$ in Line~\ref{ComputeCritVal} in Algorithm~\ref{AlgDualAscentGSSSC} also requires a reduce-sum type parallel communication. 
For implementation purposes, the computation of these values, including the computation of $\Gamma$ from the SDM-GS call, can be combined into one reduce-sum communication. In total, each iteration of Algorithm~\ref{AlgDualAscentGSSSC} requires two reduce-sum type communications, one for computing the $z$-update of Line~\ref{SDMGSZUpdate} Algorithm~\ref{AlgBCD-XZ}, and one combined reduce-sum communication to compute scalars associated with the Lagrangian bounds and the critical values for the termination conditions. 
The storage and updates of $x^k$ and $\omega^k$ and $D$ can also be done in parallel, while $z^k$ and $\gamma^k$ need to be computed and stored by every processor at each iteration $k$.

%%%%%%%%%%%%%%%%%%%%%%% COMPUTATIONAL EXPERIMENTS AND RESULTS %%%%%%%%%%%%%%%%%%%%%%%%
\section{Computational experiments and results}\label{Sect4}
In this section, we present and examine the results of experiments for two tests each with the following purpose.
\begin{description}
%\item [{\bf Test 1:}] to examine the effect on the Lagrangian values due to increasing the number of SDM-GS iterations $t_{max}$; increasing $t_{max} \to \infty$ has the effect of simulating the application of the proximal simplicial decomposition method when $f$ is linear;
\item [{\bf Test 1:}] to demonstrate the effect of enforcing the serious step condition on the Lagrangian values;
\item [{\bf Test 2:}] to compare the parallel speedup between the use of two parallel implementations of SDM-GS-ALM (Algorithm~\ref{AlgDualAscentGSSSC}) and the two parallel approaches in~\cite{LubinEtAl2013}. Additionally, the final iteration Lagrangian bounds are compared between the different parallel implementations for each experiment.
\end{description}

Computational experiments were performed on instances from two classes of problems. The first class consists of the capacitated allocation problems (CAP)~\cite{Bodur2014EtAl}.
The second class consist of problems from the Stochastic Integer Programming Test Problem Library (SIPLIB), which are described in detail in~\cite{NtaimoPhD2004,SIPLIB} and accessible at~\cite{SIPLIB}. These are all large-scale mixed-integer linear optimization problems, so the preceding observations for when $f$ is linear apply.

Test 1 was conducted with a Matlab 2012b~\cite{MATLAB2012B} serial implementation of Algorithm~\ref{AlgDualAscentGSSSC} using CPLEX 12.6.1~\cite{CPLEX12-6} as the solver. The computing environment was on an Intel$^{\circledR}$ Core$^{\texttrademark}$ i7-4770 3.40 GHz processor with 8 GB RAM and on a 64-bit operating system. All experiments for Test 1  were run with maximum number of iterations $k_{max}=20$.
The parallel experiments of Test 2 were conducted with a
C++ implementation of Algorithm~\ref{AlgDualAscentGSSSC} using CPLEX 12.5~\cite{CPLEX12-5} as the solver and the message passing interface (MPI) for parallel communication. For reading SMPS files into scenario-specific subproblems and for their interface with CPLEX, we used modified versions of the COIN-OR~\cite{COIN-ORURL} Smi and Osi libraries, either to instantiate appropriate C++ class instances of the subproblems directly, or to write scenario-specific MPS files from the SMPS file. 
The computing environment for the Test 2 experiments is the Raijin cluster maintained by Australia's National Computing Infrastructure (NCI) and supported by the Australian government~\cite{NCIURL}.  
The Raijin cluster is a high performance computing (HPC) environment which has 3592 nodes (system units), 57472 cores of Intel Xeon E5-2670 processors with up to 8 GB PC1600 memory per core (128 GB per node). All experiments were conducted using one thread per CPLEX solve.

The results of the Test 1 set of experiments are depicted in the plots of Figure~\ref{FigSSC} (with additional Figures~\ref{FigCAP101},~\ref{FigSSLP5-25-50} and~\ref{FigSSLP10-50-100} in Appendix~\ref{AppAddFigs}).
The use of different penalty parameter $\rho$ values is differentiated by the use of different plot colors. 
The penalties are chosen so that the smallest penalties (in red) are near optimal in terms of the resulting computational performance, while the larger penalties are known beforehand to be too large for optimal performance. For testing purposes, this is the most interesting way to choose penalty values, as smaller (than optimal) penalty values yield very little difference in Lagrangian bound between the use of different SSC parameter values.
Solid line and dashed line plots depict the Lagrange bounds due to the use of a more stringent SSC parameter value $\gamma=0.5$ and
a more lenient value for the SSC parameter $\gamma = 0.125$, respectively. 
The dotted line plots depict the Lagrangian values resulting from the non-use of the SSC, so that it evaluates true no matter what. 
%(In fact, when Algorithm~\ref{AlgDualAscentGSSSC} is applied to large-scale mixed-integer linear optimization problems and the testing of the SSC is skipped,  it becomes virtually the same in functionality to FW-PH developed in~\cite{BolandEtAl2016}.) 
The following observations are suggested from the results of these Test 1 experiments:
\begin{enumerate}
\item First, the most significant differences between the varied use of SSC occur when the penalty coefficient values are large.
In this setting, it seems to be the case that the use of more stringent (i.e., larger) values of the SSC parameter $\gamma$ has the effect of mitigating the destabilizing effect of having a penalty parameter $\rho$ value that is too large. This is significant because the performance of iterative Lagrangian dual solution approaches based on (or related to) proximal bundle methods is sensitive to the tuning of the $\rho$ value, and the optimal tuning of such parameters is assumed to be unknown beforehand in practical applications. For this reason, any mechanism to mitigate the effect of having an unfavorable tuning of the penalty parameter is highly desirable.
\item As is the case for the proximal bundle method, information from the SSC test can be used to dynamically fine-tune the value of the penalty parameter $\rho$. For the convergence analysis culminating in Proposition~\ref{PropFWBCD} to remain valid, it is expected that if $\rho$ does vary with iteration $k$, that it should stabilize to some positive value.
\item While not enforcing the SSC can adversely affect the growth trend in the Lagrangian bound, the use of a SSC parameter $\gamma$ value that is too large can have a similar effect for the tail-end values. This is most clearly seen in the Figure~\ref{FigSSC} DCAP-233-500 $\rho=50$ and $\rho=100$ plots. 
%and the SSLP-10-50-100 $\rho=5$ and $\rho=100$ plots. 
In these plots, the growth in Lagrangian bound value is noticeably stunted in the tail-end iterations for the larger $\gamma=0.5$ value as compared with the smaller $\gamma=0.125$.
\end{enumerate}

\begin{figure}[hbtp]        
\begin{tabular}{c}
\includegraphics[trim = 30mm 15mm 25mm 10mm, clip,width=1\textwidth]{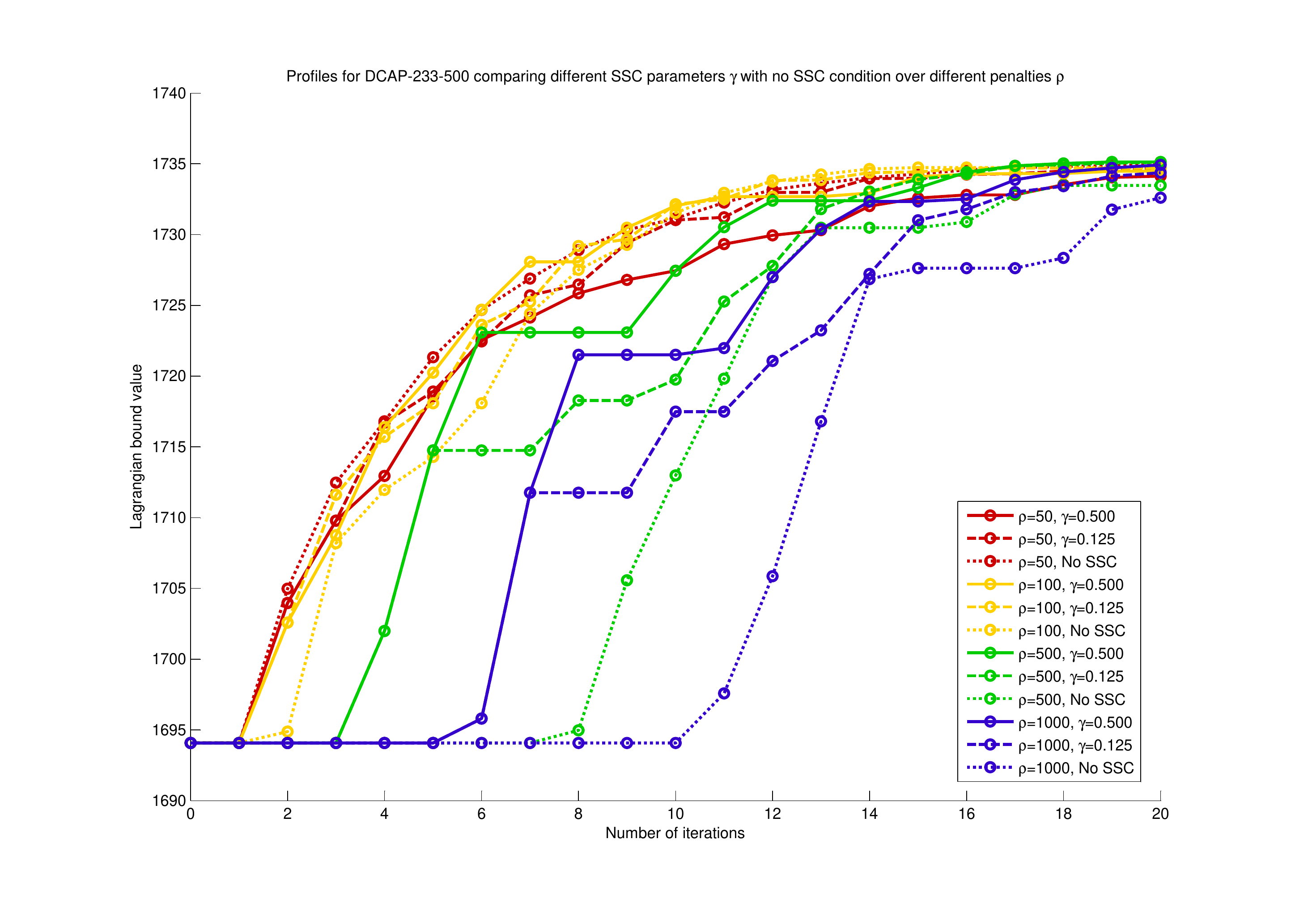}   %\\
\end{tabular}
\caption{Applying SDM-GS-ALM using different parameterizations for the SSC condition (or none).\label{FigSSC}}
\end{figure}

%\begin{figure}[hbtp]        
%%\includegraphics[trim = 30mm 10mm 30mm 10mm, clip,width=0.5\textwidth]{./DCAP-233-500.pdf}   
%%\includegraphics[trim = 30mm 10mm 30mm 10mm, clip,width=0.5\textwidth]{./DCAP-233-500-PH.pdf}   
%\includegraphics[trim = 30mm 20mm 25mm 10mm, clip,width=1\textwidth]{./MatlabExperiments/DCAP233-500RhoGamma.pdf}   
%\caption{Applying SDM-GS-ALM to DCAP-233-500 using different parameterizations for the SSC condition (or none).\label{FigDCAP233500}}
%\end{figure}
%%
%\begin{figure}[hbtp]        
%%\includegraphics[trim = 30mm 10mm 30mm 10mm, clip,width=0.5\textwidth]{./SSLP-10-50-100.pdf} 
%%\includegraphics[trim = 30mm 10mm 30mm 10mm, clip,width=0.5\textwidth]{./SSLP-10-50-100-PH.pdf} 
%\includegraphics[trim = 30mm 20mm 25mm 10mm, clip,width=1\textwidth]{./MatlabExperiments/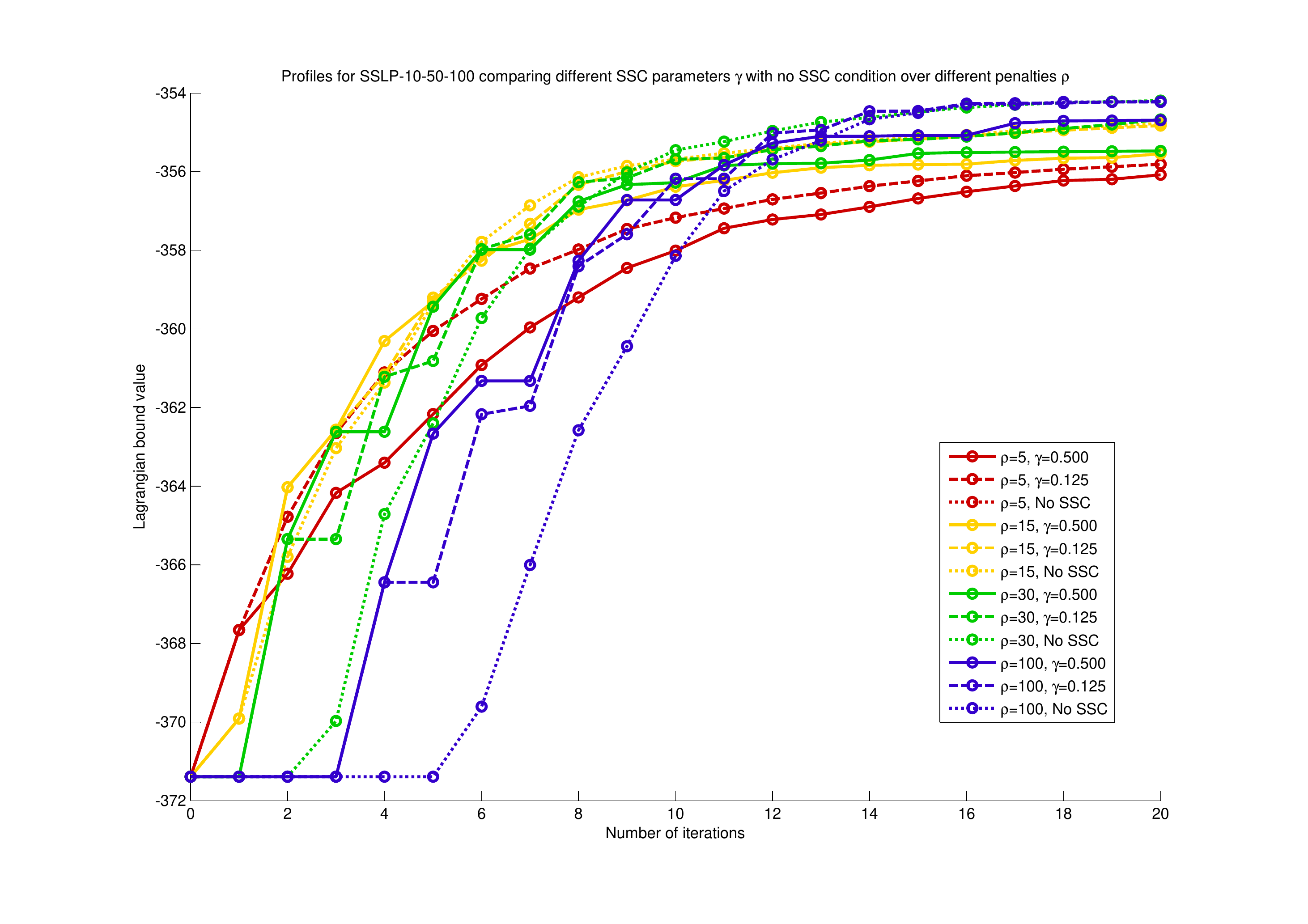} 
%\caption{Applying SDM-GS-ALM to SSLP-10-50-100 using different parameterizations for the SSC condition (or none).\label{FigSSLP10-50-100}}
%\end{figure}

For the Test 2 experiments, we primarily compare the parallel speedup achieved with Algorithm~\ref{AlgDualAscentGSSSC} against that achieved with
the enhancements to the proximal bundle method presented in~\cite{LubinEtAl2013}. Additionally, we compare the Lagrangian bound at the final iteration.

The enhancements in~\cite{LubinEtAl2013} use structure-exploiting primal-dual interior point solvers to improve the parallel efficiency of solving the proximal bundle method master problem. (The solution of this master problem is analogous to the approximated solution to problem~\eqref{EqQMPConv} obtained by using the SDM-GS method in Algorithm~\ref{AlgBCD-XZ}.)
The first solver is referred to by its acronym OOQP~\cite{GertzWright2003}, while the second is PIPS-IPM~\cite{LubinEtAl2011}.

In the experiments of Test 2, the underlying computing architecture and third-party software are inevitably different between our tests and those in~\cite{LubinEtAl2013}.
Additionally, the termination criterion is necessarily different from that given in Step 2 of Figure 2 in~\cite{LubinEtAl2013} due to the differences in algorithms. In our tests, the termination criterion comes from Lines~\ref{Alg4BeginTermCond}--\ref{Alg4EndTermCond} of Algorithm~\ref{AlgDualAscentGSSSC} with $\epsilon = 10^{-6}$.
We can nevertheless create a meaningful control in the tuning of the most important parameters affecting the performance of the algorithm.
\begin{enumerate}
\item As done in ~\cite{LubinEtAl2013}, we set the SSC parameter $\gamma=0.1$, and we initialize the dual solution $\omega^0 = 0$.
\item %At each iteration $k$, the inner approximation $D$ returned by GS-SDM is constructed as $D = \conv(D \cup \braces{\widetilde{x},\widehat{x}})$. This is analogous to keeping all generated cuts in the proximal bundle method. 
In analogy to the possible trimming of cutting planes noted in~\cite{LubinEtAl2013}, practical implementations of Algorithm~\ref{AlgDualAscentGSSSC} may judiciously trim the set $D$ to improve performance. As all cuts are kept in the experiments of~\cite{LubinEtAl2013}, so we also avoid trimming the expansion of $D$ in our experiments, and so we just use the simple update rule $D \gets \conv(D \cup \braces{\widetilde{x},\widehat{x}})$ within Algorithm~\ref{AlgBCD-XZ}.
\item We use an update rule analogous to the one in~\cite{Kiwiel1995} as is done in~\cite{LubinEtAl2013}. which takes the suggested form
given in Line~\ref{Alg4RhoUpdate} of Algorithm~\ref{AlgDualAscentGSSSC}. Initially, $\rho = 1$.
\end{enumerate}

In Tables~\ref{TabParallelSpeedupSSLP}--\ref{TabParallelSpeedupDCAP}, the columns headed by OOQP and PIPS-IPM report the parallel speedup due to the use of $N=1,8,16,32$ processors, which are originally reported in Figure 2 of~\cite{LubinEtAl2013}. If, given the use of $N$ processors, $T_N$ denotes the total wall clock time (in seconds) divided by number of iterations, then we compute the parallel speedup as $T_1/T_N$. For the computational experiments with Algorithm~\ref{AlgDualAscentGSSSC}, we compute each table entry $T_1/T_N$ after taking, from five identically parameterized experiments, 1) the minimum $T_1$ value, and 2) the average $T_N$, $N > 1$, value. 
The column headed by SDM-GS1-ALM presents 
%both the average total time (in seconds) per iteration and 
the parallel speedup values for the application of Algorithm~\ref{AlgDualAscentGSSSC} with $t_{max}=1$. The column headed by SDM-GS5-ALM is analogous, with $t_{max}=5$. The total wall clock time per iteration values used to compute the ratios $T_1/T_N$ are provided in Appendix~\ref{AppAddTabs}, accounting for taking the minimum ($N=1$) or average ($N>1$) over the five experiments for each set of parameterizations associated with Algorithm~\ref{AlgDualAscentGSSSC}. 
For the two sets of experiments based on the application of Algorithm~\ref{AlgDualAscentGSSSC}, a problem-specific maximum number of main loop iterations was set so as to make the tests as comparable with the tests in~\cite{LubinEtAl2013} as possible. These data are also reported in Appendix~\ref{AppAddTabs}.
Also in Tables~\ref{TabParallelSpeedupSSLP}--\ref{TabParallelSpeedupDCAP}, the best Lagrangian bounds obtained for each combination of test problem and algorithm are reported.
%[For describing the change in $\rho$, cite~\cite{Kiwiel1995}.]

\begin{table}[hbtp]
\begin{footnotesize}
\begin{tabular}{|c|rr|rr|}
\hline
& \multicolumn{4}{c|}{Speedup for SSLP 5-25-100}\\
No. Proc.  & \multicolumn{1}{c}{OOQP} & \multicolumn{1}{c}{PIPS-IPM} & \multicolumn{1}{c}{SDM-GS1-ALM} & \multicolumn{1}{c|}{SDM-GS5-ALM} \\
%  & speedup  & speedup  & speedup & speedup \\
%\hline
% \multicolumn{5}{|c|}{SSLP 5-25-100}\\
\hline
1		&1.00				&1.00				&1.00				&1.00\\
8		&5.54				&5.23				&4.38				&4.78\\
16		&8.89				&8.55				&6.61				&7.07\\
32		&11.69			&11.94			&8.19				&8.89\\
\hline
 Lagr. Value  & \multicolumn{1}{r}{-127.37} & \multicolumn{1}{r|}{-127.37} & \multicolumn{1}{r}{-127.71}  & \multicolumn{1}{r|}{-127.58} \\ 
 \hline
 \hline
 &\multicolumn{4}{|c|}{Speedup for SSLP 10-50-500}\\
 No. Proc.  & \multicolumn{1}{c}{OOQP} & \multicolumn{1}{c}{PIPS-IPM} & \multicolumn{1}{c}{SDM-GS1-ALM} & \multicolumn{1}{c|}{SDM-GS5-ALM} \\
% \hline
  %& \multicolumn{3}{c|}{OOQP} & \multicolumn{3}{c||}{PIPS-IPM} & \multicolumn{3}{c|}{SDM-GS1-ALM} & \multicolumn{3}{c|}{SDM-GS5-ALM} \\
% No. Proc. && secs/iter. & speedup && secs/iter. & speedup && secs/iter. & speedup && secs/iter. & speedup \\
\hline
1			&1.00			&1.00				&1.00				&1.00\\
8			&2.64			&2.80				&6.87				&6.95\\
16			&2.70			&2.92				&12.95			&12.84\\
32			&2.98			&3.40				&21.67			&20.98\\
\hline
 Lagr. Value  & \multicolumn{1}{r}{-349.14} & \multicolumn{1}{r|}{-349.14} & \multicolumn{1}{r}{-349.48}  & \multicolumn{1}{r|}{-349.14} \\ 
 \hline
 \hline
% \multicolumn{5}{|c|}{SSLP 10-50-2000}\\
% \hline
  & \multicolumn{4}{c|}{Speedup for SSLP 10-50-2000}  \\
No. Proc.  &  \multicolumn{2}{c}{SDM-GS1-ALM} & \multicolumn{2}{c|}{SDM-GS5-ALM} \\
 \hline
 1	&\multicolumn{2}{c}{1.00}		&\multicolumn{2}{c|}{1.00}\\
2	&\multicolumn{2}{c}{2.34}		&\multicolumn{2}{c|}{2.34}\\
4	&\multicolumn{2}{c}{4.81}		&\multicolumn{2}{c|}{4.83}\\
8	&\multicolumn{2}{c}{9.29}		&\multicolumn{2}{c|}{9.25}\\
16	&\multicolumn{2}{c}{18.69}		&\multicolumn{2}{c|}{18.48}\\
32	&\multicolumn{2}{c}{34.63}		&\multicolumn{2}{c|}{35.10}\\
64	&\multicolumn{2}{c}{60.59}		&\multicolumn{2}{c|}{60.93}\\
\hline
 Lagr. Value  & \multicolumn{2}{c}{-348.35}  & \multicolumn{2}{c|}{-347.75} \\ 
 \hline
\end{tabular}
\caption{SSLP: Comparing speedup and final best Lagrangian bound } \label{TabParallelSpeedupSSLP}%\label{TabParallelSpeedupSSLP}}
\end{footnotesize}
\end{table}

\begin{table}[hbtp]
\begin{footnotesize}
\begin{tabular}{|c|rr|rr|}
\hline
& \multicolumn{4}{|c|}{Speedup for DCAP 233-500}\\
 \hline
No. Proc.  & \multicolumn{1}{c}{OOQP} & \multicolumn{1}{c}{PIPS-IPM} & \multicolumn{1}{c}{SDM-GS1-ALM} & \multicolumn{1}{c|}{SDM-GS5-ALM} \\
\hline
1		&1.00			&1.00 		&1.00 & 	1.00\\
%2&	1.82&			&		&2.03			&			& \\
%4&	0.97&			&		&3.80			&			& \\
8		&2.44			&5.32 		&6.88 	& 8.11\\
16		&2.81			&8.15 		&13.28 	& 15.65\\
32		&1.63			&10.25 		&23.42 	& 27.40\\
%64&	0.09&			&		&39.23		&			&\\
\hline
 Lagr. Value  & \multicolumn{1}{r}{1736.68} & \multicolumn{1}{r|}{1736.68} & \multicolumn{1}{r}{1734.99}  & \multicolumn{1}{r|}{1736.02} \\ 
 \hline
 \hline
 &\multicolumn{4}{|c|}{Speedup for DCAP 243-500}\\
 \hline
No. Proc.  & \multicolumn{1}{c}{OOQP} & \multicolumn{1}{c|}{PIPS-IPM} & \multicolumn{1}{c}{SDM-GS1-ALM} & \multicolumn{1}{c|}{SDM-GS5-ALM} \\
\hline
1	& 	1.00	 &		1.00	&		1.00	&		1.00\\
8	&	2.85	&		5.71	&		6.51	&		7.61 \\
16	&	3.59	&		5.85	&		12.28	&		14.44 \\
32	&	1.98	&		6.44	&		21.99	&		25.25	\\				
\hline
 Lagr. Value  & \multicolumn{1}{r}{2165.48} & \multicolumn{1}{r|}{2165.50} & \multicolumn{1}{r}{2162.58}  & \multicolumn{1}{r|}{2164.48} \\ 
 \hline
 \hline
& \multicolumn{4}{|c|}{Speedup for DCAP 332-500}\\
 \hline
No. Proc.  & \multicolumn{1}{c}{OOQP} & \multicolumn{1}{c|}{PIPS-IPM} & \multicolumn{1}{c}{SDM-GS1-ALM} & \multicolumn{1}{c|}{SDM-GS5-ALM} \\
\hline
1			&	1.00			&	1.00		&	1.00		&	1.00  \\
8			&	2.03			&	5.56		&	6.83		&	8.50\\
16			&	2.33			&	5.00		&	12.84		&	16.20 \\
32			&	1.21			&	6.61		&	21.83		&	23.48\\
\hline
 Lagr. Value  & \multicolumn{1}{r}{1587.44} & \multicolumn{1}{r|}{1587.44} & \multicolumn{1}{r}{1584.77}  & \multicolumn{1}{r|}{1586.11} \\ 
 \hline
 \hline
 & \multicolumn{4}{|c|}{Speedup for DCAP 342-500}\\
 \hline
No. Proc.  & \multicolumn{1}{c}{OOQP} & \multicolumn{1}{c|}{PIPS-IPM} & \multicolumn{1}{c}{SDM-GS1-ALM} & \multicolumn{1}{c|}{SDM-GS5-ALM} \\
\hline
1		& 1.00	& 1.00		&1.00			&1.00			\\
8		& 2.45	& 3.78		&7.16			&8.25			\\
16		& 2.71	& 4.36		&12.95		&15.49		\\
32		& 1.84	& 4.64		&22.41		&26.93		\\
\hline
 Lagr. Value  & \multicolumn{1}{r}{1902.84} & \multicolumn{1}{r|}{1903.21} & \multicolumn{1}{r}{1900.81}  & \multicolumn{1}{r|}{1901.90} \\ 
 \hline
\end{tabular}
\caption{DCAP: Comparing speedup and final best Lagrangian bound \label{TabParallelSpeedupDCAP}}
\end{footnotesize}
\end{table}

We draw the following conclusions from the results of the Test 2 experiments reported in Tables~\ref{TabParallelSpeedupSSLP}--\ref{TabParallelSpeedupDCAP}.
\begin{enumerate}
\item The improvement in parallel speedup (SDM-GS-ALM columns) over either OOQP or PIPS-IPM is evident for all problems except for the one with the fewest number of scenarios (SSLP 5-25-100). %Given that $f$ is linear in all experiments, the main difference between the SDM-GS-ALM algorithm and the proximal bundle method with either OOQP or PIPS-IPM is in how the continuous master problems of each respective approach is (approximately) solved. In SDM-GS-ALM, we use the integrated SDM and Gauss-Seidel approach (SDM-GS), which provides an iterative approach to approximately solving the continuous master problem where parallelism is realized with modest communication requirements (just one reduce-sum type communication for each update of $z^k$).% structure-exploiting primal-dual interior point methods used in OOQP and PIPS-IPM; 
\item Slightly inferior final Lagrange bounds reported for SDM-GS1-ALM ($t_{max}=1$)  are evident. This deficit is improved by using SDM-GS with $t_{max}=5$, as done for the SDM-GS5-ALM experiments. But even these bounds are usually not as good as the bounds obtained with OOQP or PIPS-IPM; this is due to their more exact solving of the master problem instances. This suggests that as the iterations $k \ge 1$ increase, it is advantageous to solve the continuous master problem with SDM-GS iterations using larger $t_{max}$ values.
\item Interestingly, parallel speedup is enhanced for SDM-GS5-ALM over SDM-GS1-ALM; although the latter yields lower average total wall clock time per iteration, the proportion of efficiently parallelizable work seems to increase in the former.
\end{enumerate}

For Test 2, we also tested the performance of Algorithm~\ref{AlgDualAscentGSSSC} on the SSLP 10-50-2000 problem, which is of substantially larger scale than the other test problems considered in this paper. Using $N=1,2,4,8,16,32,64$ processors, we see very good speedup, which suggests the realized benefit of distributing the use of memory. We also see that for such large-scale problems, the additional cost in time of performing more inner loop Gauss-Seidel iterations (larger $t_{max}$) becomes marginal, since the cost of solving the mixed-integer linear subproblems takes a larger share of the computational time.

%%%%%%%%%%%%%%%%%%%% CONCLUSION AND FUTURE WORK %%%%%%%%%%%%%%%%%%%%%%%%%%%
\section{Conclusion and future work}\label{SectConclusion}
Our contribution is motivated by the goal of improving the efficiency of parallelization applied to iterative approaches for solving the Lagrangian dual problem of 
large scale optimization problems. These problems have nonlinear convex differentiable objective $f$, decomposable nonconvex constraint set $X$, and nondecomposable affine constraint set $Qx=z$ to which Lagrangian relaxation is applied. Problems of such a form include the split variable extensive form of mixed-integer linear stochastic programs as a special case. 
%mixed-integer linear stochastic optimization problems. 
%In considering the split variable extensive deterministic form of such mixed-integer linear problems, 
%The primal characterization of the Lagrangian dual problem of the form $\min_{x,z} \braces{f(x) : Qx=z, x \in \conv(X), z \in Z}$. 
Implicitly, our approach refers to the convex hull $\conv(X)$ of $X$, and the assumed lack of known description of $\conv(X)$ needs to be addressed. Proximal bundle methods (alternatively in the form of the proximal simplicial decomposition method or stabilized column generation) are well-known for addressing the latter issue. In the former issue, that of exploiting the large scale structure to apply parallel computation efficiently, we develop a modified augmented Lagrangian (AL) method with approximate subproblem solutions that incorporates ideas from the proximal bundle method.% or more especially its dual form the proximal simplicial decomposition method.  

%In developing this framework, we use the proximal simplicial decomposition method as a motivating analogy, where there is no use of a cutting-plane model. 
%To this end, we develop an implementation of the augmented Lagrangian (AL) method with approximate subproblem solutions that is practical for our motivating problems and that allows for efficient parallelization. 
The approximation of subproblem solutions is based on an iterative approach that integrates ideas from the simplicial decomposition method (SDM) (for constructing inner approximations of $\conv(X)$) and the nonlinear block Gauss-Seidel method. It is the latter Gauss-Seidel aspect that is primarily responsible for enhancing the parallel efficiency that is observed in the numerical experiments. While convergence analysis of the integrated SDM-GS approach may be derived from slight modifications to results in~\cite{Bonettini2011}, for the sake of completeness and explicitness, we provide in the appendix a proof of optimal convergence of SDM-GS as it is applied within our algorithm under a standard set of conditions. A distinction between so-called ``serious'' steps and ``null'' steps, in analogy to the proximal bundle method, is also recovered. Once these aspects are successfully integrated, then the contribution is complete, where the beneficial stabilization associated with proximal point methods and the ability to apply parallelization more efficiently are both realized. The resulting algorithm developed in this paper is referred to as SDM-GS-ALM, which has similar functionality to the alternating direction method of multipliers (ADMM).

We performed numerical tests of two sorts. 
%In Test 1, we examined the effect of approximating the solution of the AL primal subproblem using one iteration of SDM-GS against using exact solves of the proximal bundle method continuous master problem. In the setting with linear objective function, the latter computations were simulated by allowing the Gauss-Seidel iterations to solve the continuous master problem (with fixed inner approximation) to (near) exactness. We saw that in the early iterations, there was surprisingly no consistent outperforming of one method over the other in terms of the quality of Lagrangian bounds computed. In the tail-end iterations, the benefit of having the continuous master problem associated with the proximal bundle method solved exactly became more evident. %Therefore, enhancements to SDM-GS-ALM would have a rule introduced for adjusting the number of Gauss-Seidel iterations to use that are not followed by SDM expansions of the inner approximation.
In Test 1, we examined the impact of varying the serious step condition parameter. We found that parameterizations that effect more stringent serious step conditions seem to have the effect of mitigating the early iteration instability due to penalty parameters that are too large. At the same time, the more stringent serious step condition parameterizations seemed to result in slower convergence to dual optimality in the tail-end. As is the case for proximal bundle methods, information obtained in the serious step condition tests may be used to beneficially adjust the proximal term penalty coefficient in   early iterations. 
%(Although the convergence analysis may require the penalty to stabilize in the later iterations.)

In Test 2, we examined the efficiency of parallelization, measured by the speedup ratio, due to the use of the SDM-GS-ALM, compared versus pre-existing implementations of the proximal bundle method that use structure exploiting primal dual interior point methods to improve parallel efficiency. We saw in these results a promising increase in parallel efficiency due to the use of SDM-GS-ALM, where the increase in parallel efficiency is attributed primarily to the successful incorporation of Gauss-Seidel iterations. The results of the last problem tested, SSLP 10-50-2000, additionally suggested a benefit due to the ability of SDM-GS-ALM to distribute not just the workload, but also the use of memory. The vector of auxiliary variables $z$ is the only substantial block of data that needs to be stored and modified by all processors. In the context of stochastic optimization problems, this represents a modest communication bottleneck in proportion to the number of first-stage variables for two-stage problems, while for multistage problems, the amount of such data that must be stored by every processor and modified by parallel communication can increase exponentially with the number of stages.

Potential future improvements include the following. While a default implementation of SDM-GS-ALM would have one Gauss-Seidel iteration per SDM-GS call, the Lagrangian bounds reported from the Test 2 experiments suggest that an improved implementation would have early iterations use one Gauss-Seidel iteration per SDM-GS call, but steadily increase the number of Gauss-Seidel iterations per SDM-GS call for the later iterations. This results in better Lagrangian bounds at termination. While these extra Gauss-Seidel iterations require extra parallel communication, the additional wall clock time required becomes increasingly marginal for larger problems where the cost of solving the SDM linearized subproblems associated with expanding the inner approximation increasingly outweighs the cost associated with computing the approximate solution of the continuous master problem and any required parallel communications. 

A potentially large improvement to the speed of convergence, in terms of wall clock time, would be to incorporate into the analysis the degree to which the SDM linearized subproblem can be solved suboptimally and yet retain the optimal convergence. We expect that solving these subproblems exactly, particularly in the early iterations, is highly wasteful, and providing a theoretical basis for controlling the tolerance of solution inaccuracy would be of great value. Another potential avenue for future work is to extend the experimental analysis to multistage mixed-integer stochastic optimization problems and/or nonlinear problems, as the form of the problem addressed by SDM-GS-ALM is general enough to model these types of problems.

%\begin{acknowledgements}
%We thank Prof. Jeffrey Linderoth for helpful input in the development and preparation of this manuscript.
%If you'd like to thank anyone, place your comments here
%and remove the percent signs.
%\end{acknowledgements}

\appendix
\section{Technical lemmas for establishing optimal convergence of SDM-GS}\label{AppSDMGS}
 
%[Remember to cite Bezdek, Tseng et al.]
Given initial $(x^0,z^0) \in X \times Z \subset \mathbb{R}^n \times \mathbb{R}^q$, we consider the generation of the sequence $\braces{(x^k,z^k)}$ with iterations computed using Algorithm~\ref{AlgFWGS}, whose target problem is given by
\begin{equation}\label{EqSCGGSP}
\min_{x,z} \braces{F(x,z) : x \in X, z \in Z},
\end{equation}
where $(x,z) \mapsto F(x,z)$ is convex and continuously differentiable over $X \times Z$, and sets $X$ and $Z$ are closed and convex, with $X$ bounded and $z \mapsto F(x,z)$ is inf-compact for each $x \in X$.

We define the directional derivative with respect to $x$ as
$$F_x'(x,z;d) := \lim_{\alpha \downarrow 0} \frac{F(x+\alpha d,z) - F(x,z)}{\alpha}.$$
Of interest is the satisfaction of the following local stationarity condition at $x \in X$:
\begin{align}
&F_x'({x},{z};d) \ge 0 \quad \text{for all}\; d \in X - \braces{{x}} \label{XStat}%\\
%&F_z'({x},{z};d) \ge 0 \quad \text{for all}\; d \in Z - \braces{{z}}, \label{ZStat}
\end{align}
for any limit point $(x,z)=(\bar{x},\bar{z})$ of some sequence $\braces{(x^k,z^k)}$ of feasible solutions to problem~\eqref{EqSCGGSP}.
For the sake of nontriviality, we shall assume that the $x$-stationarity condition~\eqref{XStat} never holds at $(x,z)=(x^k,z^k)$ for any $k \ge 0$. Thus, for each $x^k$, $k \ge 0$, there always exists a $d^k \in X-\braces{x^k}$ for which $F_x'(x^k,z^k;d^k) < 0$.

\medskip
\noindent
{\bf Direction Assumptions (DAs):} For each iteration $k \ge 0$, given $x^k \in X$ and $z^k \in Z$, we have $d^k$ chosen so that
1) $x^k + d^k \in X$; and 2) $F_x'(x^k,z^k;d^k) < 0$.

\medskip
\noindent
{\bf Gradient Related Assumption (GRA):} Given a sequence $\braces{(x^k,z^k)}$ with $\lim_{k \to \infty} (x^k,z^k) = (\overline{x},\overline{z})$, and a bounded sequence $\braces{d^k}$ of directions, then the existence of a direction $\overline{d} \in X - \braces{\overline{x}}$ such that
$F_{x}'(\overline{x},\overline{z};\overline{d}) < 0$ implies that
\begin{equation}\label{EqSGR}
\limsup_{k \to \infty} F_{x}'(x^k,z^k;d^k) < 0.
\end{equation}
In this case, we say that $\braces{d^k}$ is \emph{gradient related} to $\braces{x^k}$. This {gradient related condition} is similar to the one defined in~\cite{Bertsekas1999}.
The sequence of directions ${d^k}$ is typically gradient related to $\braces{x^k}$ by construction. (See Lemma~\ref{LemmaDir}.)
\medskip

%The Direction Assumption and the Gradient Related Assumption regarding the descent directions $\braces{d^k}$ are general, in that the directions are not assumed to have specific form such as steepest descent directions or Frank-Wolfe method-generated directions. Nor is it even assumed that the $x$-updates $x^{k+1}$ are obtained as a result of steps from $x^k$ along the direction $d^k$.
To state the last assumption, we require the notion of an Armijo rule step length $\alpha^k \in (0,1]$ given $(x^k,z^k,d^k)$ and parameters $\beta,\sigma \in (0,1)$.
\begin{algorithm}[H]
\caption{Computing an Armijo rule step length $\alpha^k$ at iteration $k$.\label{AlgFWGS}}
\begin{algorithmic}[1]
%\State {\bf Preconditions:} $(x,z) \mapsto F(x,z)$ is convex, locally Lipschitz continuous, and continuously differentiable. Sets $X$ and $Z$ are closed and convex, with $X$ bounded  and $z \mapsto F(x,z)$ inf-compact for each $x \in X$, and parameters $\beta, \sigma \in (0,1)$.
\Function{ArmijoStep}{$F$, $x^k$, $z^k$, $d^k$, $\beta$, $\sigma$}  
       %\State Compute $d^k \gets \argmin_d \braces{F_x'(x^k,z^k ; d) : d \in X-\braces{x^k}}$ \label{DirFindingSPGS}
       \State $\alpha^k \gets 1$ \label{ArmijoBegin}
       \While{$F(x^k+\alpha^k d^k,z^k) - F(x^k,z^k) > \alpha^k \sigma F_x'(x^k,z^k;d^k)$}\label{ArmijoWhileBegin}
         \State $\alpha^k \gets \beta \alpha^k$
       \EndWhile\label{ArmijoEnd}\label{ArmijoWhileEnd}
       %\State Compute any $(x^{k+1},z^{k+1}) \in X \times Z$ such that \label{AlgGSXUpdateA}
       %\State \quad\quad $F(x^{k+1},z^{k+1}) \le F(x^k + \alpha^k d^k,z^k)$  and  $z^{k+1} \in \argmin_z \braces{F(x^{k+1},z) : z \in Z}$ \label{AlgGSXUpdateB}
       %\State Compute $z^{k+1} \in \argmin_z \braces{F(x^{k+1},z) : z \in Z}$
       \State {\bf return} $\alpha^k$
\EndFunction
\end{algorithmic}
\end{algorithm} 
\begin{remark}\label{ArmijoRuleRemark}
Under mild assumptions on $F$ such as continuity that guarantee the existence of finite 
$F_x'(x,z;d)$ 
for all $(x,z,d) \in \braces{(x,z,d) : x \in X, d \in X-\braces{x}, z \in Z}$, we may assume that the while loop of Lines~\ref{ArmijoWhileBegin}--\ref{ArmijoWhileEnd} terminates after a finite number of iterations. Thus, we have $\alpha^k \in (0,1]$ for each $k \ge 1$. %Also, note that the $x$-updates $x^{k+1}$ in Lines~\ref{AlgGSXUpdateA}--\ref{AlgGSXUpdateB} need not be obtained as a result of steps from $x^k$ along the direction $d^k$.
\end{remark}
The last significant assumption is stated as follows.

\medskip
\noindent
{\bf Sufficient Decrease Assumption (SDA):} For sequences $\braces{(x^k,z^k,d^k)}$ and step lengths $\braces{\alpha^k}$ computed according to Algorithm~\ref{AlgFWGS}, we assume for each $k \ge 0$, that $(x^{k+1},z^{k+1})$ satisfies
$$
F(x^{k+1},z^{k+1}) \le F(x^k + \alpha^k d^k,z^k).
$$

\begin{lemma}\label{SCGXStatLemma}
For problem~\eqref{EqSCGGSP}, let $F : \mathbb{R}^{n_x} \times \mathbb{R}^{n_z} \mapsto \mathbb{R}$ be convex and continuously differentiable, $X \subset \mathbb{R}^{n_x}$ convex and compact, and $Z \subseteq \mathbb{R}^{n_z}$ closed and convex. Furthermore, assume for each $x \in X$ that $z \mapsto F(x,z)$ is inf-compact. If a sequence $\braces{(x^k,z^k,d^k)}$ satisfies the DA, the GRA, and the SDA for some fixed $\beta,\sigma \in (0,1)$, then the sequence ${(x^k,z^k)}$ has limit points $(\overline{x},\overline{z})$, each of which satisfies the stationarity condition~\eqref{XStat}.
\end{lemma}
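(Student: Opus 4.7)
The plan is a standard Armijo-rule / sufficient-decrease contradiction argument, adapted to accommodate the passive $z$-block.

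To see that limit points exist, first combine the SDA with the Armijo inequality to obtain
\begin{equation*}
F(x^{k+1},z^{k+1}) \le F(x^k+\alpha^k d^k,z^k) \le F(x^k,z^k) + \alpha^k \sigma F_x'(x^k,z^k;d^k) \le F(x^k,z^k),
\end{equation*}
so the whole sequence lies in the sublevel set $L := \braces{(x,z)\in X\times Z : F(x,z) \le F(x^0,z^0)}$. $L$ is closed by continuity of $F$, and I would argue it is bounded by a recession-cone argument: since $X$ is bounded, any direction of recession of $L$ must have the form $(0,v)$, and then $F(x^0,z^0+tv) \le F(x^0,z^0)$ for all $t\ge 0$; inf-compactness of $z\mapsto F(x^0,z)$ forces $v=0$. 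Hence $L$ is compact and $\braces{(x^k,z^k)}$ has limit points $(\bar x,\bar z)$.

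Suppose towards contradiction that some limit point $(\bar x,\bar z)$ violates \eqref{XStat}, so there exists $\bar d \in X - \braces{\bar x}$ with $F_x'(\bar x,\bar z;\bar d) < 0$. Extract a subsequence (re-indexed by $k$) with $(x^k,z^k)\to(\bar x,\bar z)$; because $\braces{d^k} \subset X - X$ is bounded, I pass to a further subsequence with $d^k \to \tilde d$. The GRA then supplies $\delta>0$ with $F_x'(x^k,z^k;d^k)\le -\delta$ for all large $k$, and joint continuity of $\nabla_x F$ promotes this to $F_x'(\bar x,\bar z;\tilde d)\le -\delta < 0$. Telescoping the Armijo-SDA inequality and using that $F$ is bounded below on the compact set $L$ yields $\sum_k \alpha^k |F_x'(x^k,z^k;d^k)| < \infty$, which together with the uniform lower bound $\delta$ forces $\alpha^k \to 0$ along the subsequence.

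Because $\alpha^k \to 0$, for all sufficiently large $k$ the trial step $\alpha^k/\beta$ was rejected by the while loop of Algorithm~\ref{AlgFWGS}, i.e.,
\begin{equation*}
\frac{F(x^k+(\alpha^k/\beta)d^k,z^k) - F(x^k,z^k)}{\alpha^k/\beta} > \sigma\, F_x'(x^k,z^k;d^k).
\end{equation*}
Taking $k\to\infty$ and using continuous differentiability of $F$ together with $\alpha^k/\beta \to 0$ to identify the limit of the left side with $F_x'(\bar x,\bar z;\tilde d)$, I would obtain $F_x'(\bar x,\bar z;\tilde d) \ge \sigma F_x'(\bar x,\bar z;\tilde d)$, i.e., $(1-\sigma)F_x'(\bar x,\bar z;\tilde d) \ge 0$. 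Since $\sigma\in(0,1)$ and the derivative is strictly negative, this is the desired contradiction.

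The main obstacle I expect is the last limit step: justifying that the difference-quotient on the left-hand side of the rejection inequality converges to $F_x'(\bar x,\bar z;\tilde d)$ requires $(x^k,z^k,d^k)\to(\bar x,\bar z,\tilde d)$ plus $\alpha^k\to 0$, handled cleanly via a first-order Taylor expansion along the segment $[x^k,\, x^k+(\alpha^k/\beta)d^k]$ and uniform continuity of $\nabla_x F$ on a compact neighborhood. A secondary but necessary technicality is the compactness of $L$, where pointwise inf-compactness in $z$ must be promoted to joint compactness using the convexity of $F$ through the recession-cone argument described above; without this, $\braces{z^k}$ need not have any limit points at all.
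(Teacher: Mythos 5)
Your proof is correct and follows essentially the same route as the paper's: monotone descent from the Armijo--SDA chain gives limit points, GRA forces $\limsup_k F_x'(x^k,z^k;d^k)<0$ and hence $\alpha^k\to 0$, and the rejected trial step $\alpha^k/\beta$ yields in the limit $(1-\sigma)F_x'(\overline{x},\overline{z};\overline{d})\ge 0$, contradicting $\sigma\in(0,1)$. The only differences are cosmetic: you make the existence of limit points explicit via a recession-cone argument where the paper simply asserts it, you get $\alpha^k\to 0$ from summability of $\alpha^k|F_x'|$ rather than from vanishing of $F(x^k+\alpha^k d^k,z^k)-F(x^k,z^k)$, and you close the final limit with a Taylor expansion where the paper invokes the mean value theorem plus joint continuity of $(x,z,d)\mapsto F_x'(x,z;d)$.
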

\begin{proof}
The existence of limit points $(\overline{x},\overline{z})$ follows from the compactness of $X$, the inf-compactness of $z \mapsto F(x,z)$ for each $x \in X$, and the SDA.

%We assume that $x^k$, $z^k$, $d^k$, and $\alpha^k$ are generated for each iteration $k$ according to Algorithm~\ref{AlgFWGS} with fixed Armijo Rule parameters $\beta \in (0,1)$ and $\sigma \in (0,1)$. 
%For the sake of avoiding triviality, we assume that condition~\eqref{XStat} is never satisfied at $(x,z)=(x^k,z^k)$ for any iteration $k$, and so $F_x'(x^k,z^k ; d^k) < 0$ always holds. 
In generating $\braces{\alpha^k}$ according to the Armijo rule as implemented in Lines~\ref{ArmijoBegin}--\ref{ArmijoEnd} of Algorithm~\ref{AlgFWGS}, we have
\begin{equation}\label{EqSCGGSArmijoRule}
\frac{F(x^k + \alpha^k d^k,z^k) - F(x^k,z^k)}{\alpha^k}  \le \sigma F_x'(x^k,z^k ; d^k).
\end{equation}
By the DA,  $F_x'(x^k,z^k ; d^k) < 0$ and since $\alpha^k > 0$ for each $k \ge 1$ by Remark~\ref{ArmijoRuleRemark}, we infer from~\eqref{EqSCGGSArmijoRule} that
$
F(x^k + \alpha^k d^k,z^k) < F(x^k,z^k).
$
By construction, we have
$
F(x^{k+1},z^{k+1}) \le  F(x^k + \alpha^k d^k,z^k) < F(x^k,z^k).
$
By the monotonicity $F(x^{k+1},z^{k+1}) < F(x^k,z^k)$ and $F$ being bounded from below on $X \times Z$, we have $\lim_{k \to \infty} F(x^k,z^k) = \bar{F} > -\infty$.
Therefore, $$\lim_{k \to \infty} F(x^{k+1},z^{k+1}) - F(x^k,z^k) = 0,$$ which implies 
\begin{equation}\label{EqSCGGSVanishing}
\lim_{k \to \infty} F(x^k + \alpha^k d^k,z^k) - F(x^k,z^k) = 0.
\end{equation}

%Choosing $(x^{k+1},\widetilde{z}^k) \in \conv(X) \times Z$ for which $F(x^{k+1},\widetilde{z}^k) \le F(x^k + \alpha^k d^k,z^k) < F(x^k,z^k)$, we have $\lim_{k \to \infty} F(x^k,z^k) \to \bar{F} > -\infty$, and so 

We assume for sake of contradiction that $\lim_{k \to \infty} (x^k,z^k) = (\overline{x},\overline{y})$ does not satisfy the stationarity condition~\eqref{XStat}. 
By GRA, we have that $\braces{d^k}$ is gradient related to $\braces{x^k}$; that is, 
\begin{equation}\label{EqSCGGSSubgradRel}
\limsup_{k \to \infty} F_x'(x^k,z^k ; d^k) < 0.
\end{equation}
Thus, it follows from~\eqref{EqSCGGSArmijoRule}--\eqref{EqSCGGSSubgradRel} that $\lim_{k \to \infty} \alpha^k = 0$. 

Consequently, after a certain iteration $k \ge \bar{k}$, we can define $\braces{\bar{\alpha}^k}$, $\bar{\alpha}^k = \alpha^k / \beta$, where $\bar{\alpha}^k \le 1$ for $k \ge \bar{k}$, and so we have
\begin{equation}\label{EqReverseArmijoIneq}
\sigma F_x'(x^k,z^k ; d^k) < \frac{F(x^k + \bar{\alpha}^k d^k,z^k) - F(x^k,z^k)}{\bar{\alpha}^k}.
\end{equation}
Since $F$ is continuously differentiable, the mean value theorem may be applied to the right-hand side of~\eqref{EqReverseArmijoIneq} to get
\begin{equation}\label{EqReverseArmijoIneq2}
\sigma F_x'(x^k,z^k ; d^k) <  F_x'(x^k +\widetilde{\alpha}^k d^k, z^k ; d^k),
\end{equation}
for some $\widetilde{\alpha}^k \in [0,\overline{\alpha}^k]$.

Again, using the assumption $\limsup_{k \to \infty} F_x'(x^k,z^k ; d^k) < 0$, and also the compactness of $X - X$, we take a limit point $\overline{d}$ of $\braces{d^k}$, with its associated subsequence index set denoted by $\mathcal{K}$, such that $F_x'(\overline{x},\overline{z},\overline{d}) < 0$. 
Taking the limits over the subsequence indexed by $\mathcal{K}$, we have
$\lim_{k \to \infty, k \in \mathcal{K}} F_x'(x^k,z^k ; d^k) = F_x'(\overline{x},\overline{z} ; \overline{d})$ and $\lim_{k \to \infty, k \in \mathcal{K}} F_x'(x^k +\widetilde{\alpha}^k d^k, z^k ; d^k) = F_x'(\overline{x},\overline{z} ; \overline{d})$. These two limits holds since 1) $(x,z) \mapsto F_x'(x,z;d)$ for each $d \in X - X$ is continuous and 2) $d \mapsto F_x'(x,z;d)$ is locally Lipschitz continuous for each $(x,z) \in X \times Z$ (e.g., Proposition 2.1.1 of~\cite{Clarke1990}); these two facts together imply that $(x,z;d) \mapsto F_x'(x,z;d)$ is continuous. %(Notice that $\widetilde{\alpha}^k \to 0$.)
Then,  inequality~\eqref{EqReverseArmijoIneq2} becomes in the limit as $k \to \infty$, $k \in \mathcal{K}$,
\begin{align*}
&\sigma F_x'(\overline{x},\overline{z} ; \overline{d}) \le  F_x'(\overline{x},\overline{z} ; \overline{d}), \\
\Longrightarrow\quad &  0 \le (1-\sigma) F_x'(\overline{x},\overline{z} ; \overline{d}). \label{EqReverseArmijoIneq3}
\end{align*}
Since $(1-\sigma) > 0$ and $F_x'(\overline{x},\overline{z} ; \overline{d})<0$, we have a contradiction. Thus, $\overline{x}$ must satisfy the stationary condition~\eqref{XStat}.
%As the satisfaction of the stationarity condition~\eqref{ZStat} at $z=\overline{z}$ is evident already, we have that $(\overline{x},\overline{z})$ is stationary for problem $\min_{x,z} \braces{F(x,z) : x \in X, z \in Z}$. 
%Thus, we have proven the following proposition.
\end{proof}

\begin{remark}
Noting that $F_x'(x^k,z^k;d^k) = \nabla_x F(x^k,z^k) d^k$ under the assumption of continuous differentiability of $F$, one means of constructing $\braces{d^k}$ is as follows:
\begin{equation}\label{DirFindingSPGS}
d^k \gets \argmin_d \braces{\nabla_x F(x^k,z^k) d : d \in X-\braces{x^k}}.
\end{equation}
%The underlying optimization problem is a convex optimization problem with a linear objective function, although in practice, it might be solved, for example, as a mixed-integer linear program when $X$ is specified as the convex hull of a constraint set consisting of affine inequalities and integer restrictions.
\end{remark}

\begin{lemma}\label{LemmaDir}
Given sequence $\braces{(x^k,z^k)}$ with $\lim_{k \to \infty} (x^k,z^k) = (\overline{x},\overline{z})$, let each $d^k$, $k \ge 1$, be generated as in~\eqref{DirFindingSPGS}.
Then $\braces{d^k}$ is gradient related to $\braces{x^k}$.
\end{lemma}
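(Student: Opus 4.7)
The plan is to exploit the optimality of $d^k$ in the linear subproblem~\eqref{DirFindingSPGS}, using a cleverly chosen comparison direction built from the hypothesized $\overline{d}$.

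First, I would recall that since $F$ is continuously differentiable, the directional derivative has the closed form $F_x'(x,z;d) = \nabla_x F(x,z) d$, so the subproblem~\eqref{DirFindingSPGS} yields, for every $d \in X - \{x^k\}$,
\begin{equation*}
F_x'(x^k,z^k;d^k) = \nabla_x F(x^k,z^k) d^k \le \nabla_x F(x^k,z^k) d.
\end{equation*}
The key step is then to pick a sequence of comparison directions that converges to $\overline{d}$. Writing $\overline{d} = \overline{x}' - \overline{x}$ with $\overline{x}' \in X$ (possible since $\overline{d} \in X - \{\overline{x}\}$), I set $\widetilde{d}^k := \overline{x}' - x^k$, which belongs to $X - \{x^k\}$ for each $k$.

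Applying the inequality above with $d = \widetilde{d}^k$, I obtain
\begin{equation*}
F_x'(x^k,z^k;d^k) \le \nabla_x F(x^k,z^k)(\overline{x}' - x^k).
\end{equation*}
The convergences $x^k \to \overline{x}$, $z^k \to \overline{z}$, together with continuity of $\nabla_x F$ on $X \times Z$, imply that the right-hand side converges to $\nabla_x F(\overline{x},\overline{z})(\overline{x}' - \overline{x}) = F_x'(\overline{x},\overline{z};\overline{d})$. Taking $\limsup$ on both sides therefore yields
\begin{equation*}
\limsup_{k \to \infty} F_x'(x^k,z^k;d^k) \le F_x'(\overline{x},\overline{z};\overline{d}) < 0,
\end{equation*}
which is exactly the gradient-related condition~\eqref{EqSGR}. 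Boundedness of $\{d^k\}$ (needed to stay within the GRA framework) follows from $d^k \in X - \{x^k\}$ with $X$ bounded.

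I do not anticipate significant obstacles here: the whole argument is a one-shot comparison using the optimality of the argmin, and continuity delivers the limit. The only subtlety worth stating carefully is the construction of $\widetilde{d}^k$ as a feasible comparison direction at iteration $k$ (i.e., that $\widetilde{d}^k \in X - \{x^k\}$), which is immediate from $\overline{x}' \in X$. No measure-theoretic, selection, or subsequence extraction difficulties arise, since the bound holds along the entire sequence.
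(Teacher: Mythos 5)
Your proof is correct and follows essentially the same route as the paper's: exploit the optimality of $d^k$ in~\eqref{DirFindingSPGS} against a feasible comparison direction and pass to the limit to conclude $\limsup_{k\to\infty} F_x'(x^k,z^k;d^k) \le F_x'(\overline{x},\overline{z};\overline{d}) < 0$. Your explicit shifted direction $\widetilde{d}^k = \overline{x}' - x^k$ combined with continuity of $\nabla_x F$ is a slightly more careful rendering of the limit step, which the paper instead justifies by appealing to upper semicontinuity of $(x,z,d)\mapsto F_x'(x,z;d)$ while glossing over the $k$-dependence of the feasible set $X - \{x^k\}$.
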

\begin{proof}
By the construction of $d^k$, $k \ge 1$, we have
$$
F_x'(x^k,z^k ; d^k) \le F_x'(x^k,z^k ; d) \quad \forall \; d \in X-\braces{x^k}.
$$
Taking the limit, we have
$$
\limsup_{k \to \infty} F_x'(x^k, z^k ; d^k) \le \limsup_{k \to \infty} F_x'(x^k, z^k ; d) \le F_x'(\overline{x}, \overline{z}; d) \quad \forall \; d \in X-\braces{\overline{x}},
$$
where the last inequality follows from the upper semicontinuity of the function $(x,z,d) \mapsto F_x'(x,z;d)$, which holds in our setting due, primarily, to Proposition 2.1.1 (b) of ~\cite{Clarke1990} given that $F$ is assumed to be convex and continuous on $\mathbb{R}^n$.
Taking $\overline{d} \in \argmin_d \braces{ F_x'(\overline{x},\overline{z} ; d) : d \in X - \braces{\overline{x}}}$, we have by the assumed nonstationarity that $F_x'(\overline{x},\overline{z};\overline{d}) < 0$.
Thus, 
$
\limsup_{k \to \infty} F_x'(x^k, z^k ; d^k) < 0,
$
and so GRA holds.
\end{proof}

% BibTeX users please use one of
%\bibliographystyle{spbasic}      % basic style, author-year citations
\bibliographystyle{spmpsci}      % mathematics and physical sciences
\bibliography{./bibliographyParallelSCG}   % name your BibTeX data base

%% Non-BibTeX users please use
%\begin{thebibliography}{}
%%
%% and use \bibitem to create references. Consult the Instructions
%% for authors for reference list style.
%%
%\bibitem{RefJ}
%% Format for Journal Reference
%Author, Article title, Journal, Volume, page numbers (year)
%% Format for books
%\bibitem{RefB}
%Author, Book title, page numbers. Publisher, place (year)
%% etc
%\end{thebibliography}

\section{Supplementary Material: Additional Figures}\label{AppAddFigs}

%\begin{figure}[H]        
%%\includegraphics[[trim = 30mm 15mm 25mm 10mm, clip, width=0.5\textwidth]{./CAP-101-250.pdf} 
%\includegraphics[trim = 30mm 10mm 30mm 10mm, clip, width=0.8\textwidth]{./MatlabExperiments/ExactQPvsGSQP-DCAP233-500.pdf} 
%\caption{Comparing exact bundle method and SDM-GS-ALM for DCAP-233-500 \label{FigDCAP233500ExactGS}}
%\end{figure}
%
%\begin{figure}[H]%[hbtp]
%%\includegraphics[trim = 30mm 10mm 30mm 10mm, clip, width=0.5\textwidth]{./CAP-101-250.pdf} 
%\includegraphics[trim = 30mm 10mm 30mm 10mm, clip, width=0.8\textwidth]{./MatlabExperiments/ExactQPvsGSQP-CAP101-250.pdf} 
%\caption{Comparing exact bundle method and SDM-GS-ALM for CAP-101-250 \label{FigCAP101250ExactGS}}
%\end{figure}
%\begin{figure}[H]%[hbtp]
%%\includegraphics[trim = 30mm 10mm 30mm 10mm, clip, width=0.5\textwidth]{./CAP-101-250.pdf} 
%\includegraphics[trim = 30mm 10mm 30mm 10mm, clip, width=0.8\textwidth]{./MatlabExperiments/ExactQPvsGSQP-SSLP-10-50-100.pdf} 
%\caption{Comparing exact bundle method and SDM-GS-ALM for SSLP-10-50-100 \label{FigSSLP1050100ExactGS}}
%\end{figure}

\begin{figure}[H]%[hbtp]
\includegraphics[trim = 30mm 10mm 30mm 10mm, clip, width=0.8\textwidth]{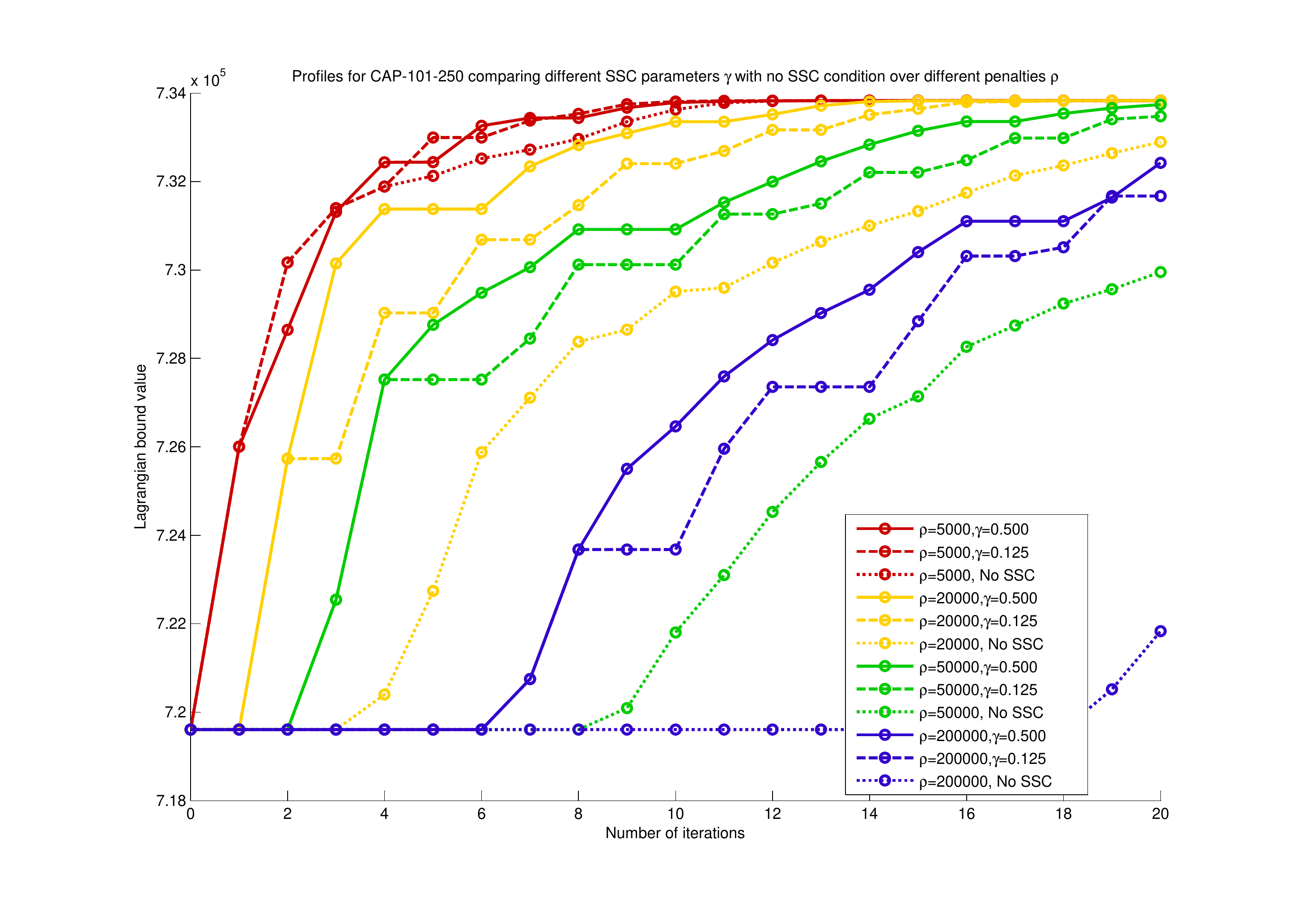} 
\caption{Applying SDM-GS-ALM to CAP-101-250 using different parameterizations for the SSC condition (or none).\label{FigCAP101}}
\end{figure}
\begin{figure}[H]%[hbtp]
\includegraphics[trim = 30mm 10mm 25mm 10mm, clip,width=0.8\textwidth]{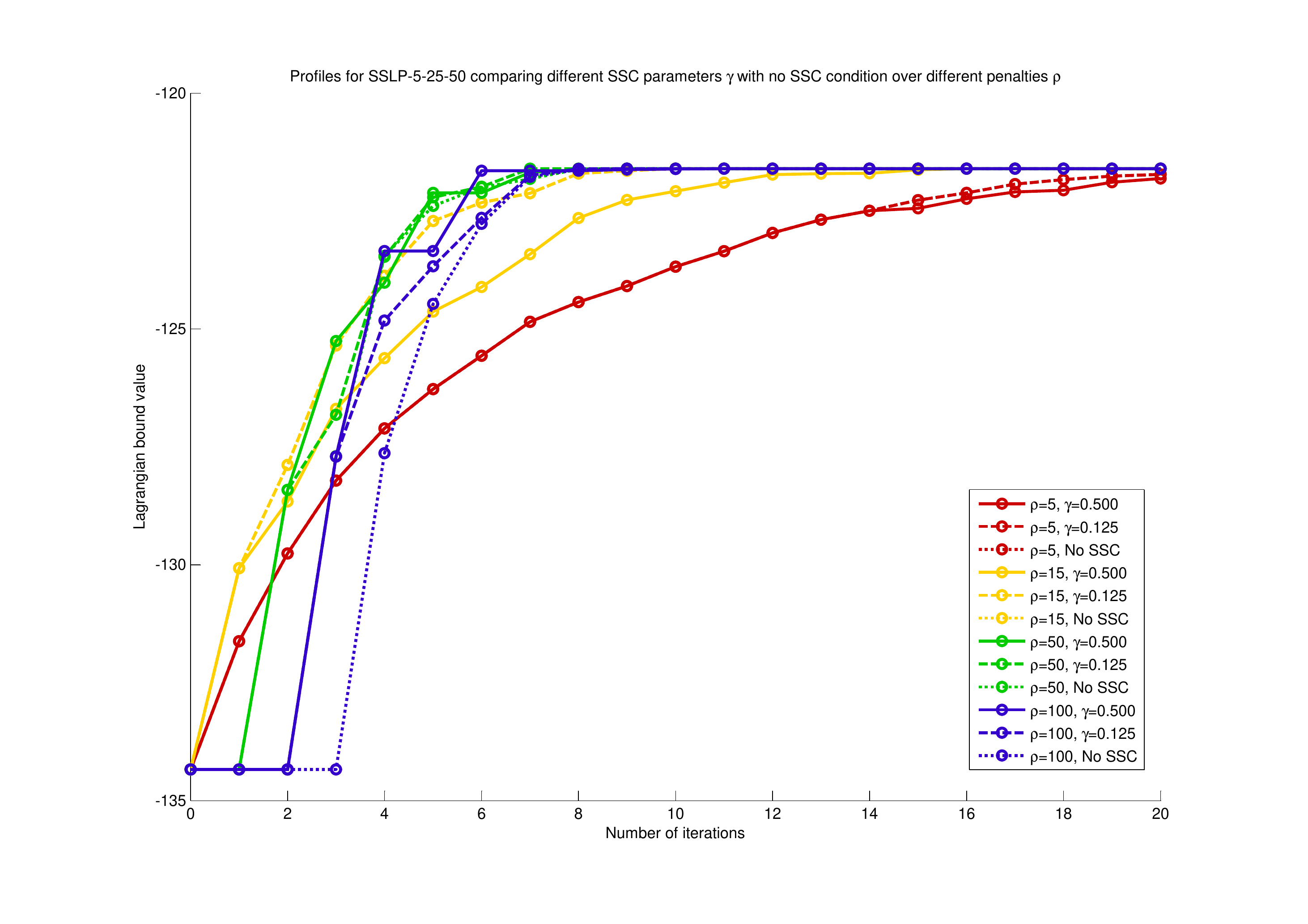}  
\caption{Applying SDM-GS-ALM to SSLP-5-25-50 using different parameterizations for the SSC condition (or none).\label{FigSSLP5-25-50}}
\end{figure}
\begin{figure}[H]%[hbtp]
\includegraphics[trim = 30mm 20mm 25mm 10mm, clip,width=0.8\textwidth]{SSLP10-50-100RhoGamma.pdf} 
\caption{Applying SDM-GS-ALM to SSLP-10-50-100 using different parameterizations for the SSC condition (or none).\label{FigSSLP10-50-100}}
\end{figure}

\section{Supplementary Material: Additional Tables}\label{AppAddTabs}

For each entry $(A,B)$ of Tables~\ref{TabSupp1} and~\ref{TabSupp2}, $A$ provides the number of iterations at termination, and $B$ provides the average wall clock time (in seconds) per iteration.
\begin{table}[ht]
\begin{footnotesize}
\begin{tabular}{|c|rr|rr|}
\hline
& \multicolumn{4}{c|}{SSLP 5-25-100}\\
No. Proc.  & \multicolumn{1}{c}{OOQP} & \multicolumn{1}{c}{PIPS-IPM} & \multicolumn{1}{c}{SDM-GS1-ALM} & \multicolumn{1}{c|}{SDM-GS5-ALM} \\
%  & speedup  & speedup  & speedup & speedup \\
%\hline
% \multicolumn{5}{|c|}{SSLP 5-25-100}\\
\hline
1		& (8, 6.31)				& (8, 6.33)			& (8,3.22)			& (8,3.32)	\\
8		& (8, 1.14)				& (8, 1.21)			& (8,0.74)			& (8,0.69)	\\
16		& (8, 0.71)				& (8, 0.74)			& (8,0.49)			& (8,0.47)	\\
32		& (8, 0.54)				& (8, 0.53)			& (8,0.39)			& (8,0.37)	\\
\hline
\hline
 &\multicolumn{4}{|c|}{SSLP 10-50-500}\\
 No. Proc.  & \multicolumn{1}{c}{OOQP} & \multicolumn{1}{c}{PIPS-IPM} & \multicolumn{1}{c}{SDM-GS1-ALM} & \multicolumn{1}{c|}{SDM-GS5-ALM} \\
% \hline
  %& \multicolumn{3}{c|}{OOQP} & \multicolumn{3}{c||}{PIPS-IPM} & \multicolumn{3}{c|}{SDM-GS1-ALM} & \multicolumn{3}{c|}{SDM-GS5-ALM} \\
% No. Proc. && secs/iter. & speedup && secs/iter. & speedup && secs/iter. & speedup && secs/iter. & speedup \\
\hline
1		& (26, 3301)			& (22, 2939)			& (30, 168.80)		& (29, 171.85)	\\
8		& (31, 1252)			& (24, 1049)			& (30, 24.58)		& (29, 24.71)	\\
16		& (27, 1224)			& (28, 1005)			& (30, 13.04)		& (30, 13.39)	\\
32		& (31, 1106)			& (27, 865)			& (30, 7.79)			& (28, 8.19)		\\
\hline
\hline
% \multicolumn{5}{|c|}{SSLP 10-50-2000}\\
% \hline
  & \multicolumn{4}{c|}{SSLP 10-50-2000}  \\
No. Proc.  &  \multicolumn{2}{c}{SDM-GS1-ALM} & \multicolumn{2}{c|}{SDM-GS5-ALM} \\
 \hline
 1	&\multicolumn{2}{c}{(20, 840.69)}		&\multicolumn{2}{c|}{(20, 845.77)}\\
2	&\multicolumn{2}{c}{(20, 359.63)}		&\multicolumn{2}{c|}{(20, 361.40)}\\
4	&\multicolumn{2}{c}{(20, 174.83)}		&\multicolumn{2}{c|}{(20, 175.03)}\\
8	&\multicolumn{2}{c}{(20, 90.51)}		&\multicolumn{2}{c|}{(20, 91.46)}\\
16	&\multicolumn{2}{c}{(20, 44.98)}		&\multicolumn{2}{c|}{(20, 45.76)}\\
32	&\multicolumn{2}{c}{(20, 24.27)}		&\multicolumn{2}{c|}{(20, 24.09)}\\
64	&\multicolumn{2}{c}{(20, 13.87)}		&\multicolumn{2}{c|}{(20, 13.88)}\\
 \hline
\end{tabular}
\caption{SSLP: Reporting auxiliary data \label{TabSupp1}} 
\end{footnotesize}
\end{table}

\begin{table}[hb]
\begin{footnotesize}
\begin{tabular}{|c|rr|rr|}
\hline
& \multicolumn{4}{|c|}{DCAP 233-500}\\
 \hline
No. Proc.  & \multicolumn{1}{c}{OOQP} & \multicolumn{1}{c}{PIPS-IPM} & \multicolumn{1}{c}{SDM-GS1-ALM} & \multicolumn{1}{c|}{SDM-GS5-ALM} \\
\hline
1		& (68, 16.15)		& (66, 12.71) 	& (68, 3.67) 		& (68, 5.21)	\\
8		& (68, 6.62)			& (70, 2.39) 		& (68, 0.53) 		& (68, 0.64)	\\
16		& (68, 5.75)			& (73, 1.56) 		& (68, 0.28) 		& (68, 0.33)	\\
32		& (68, 9.91)			& (70, 1.24) 		& (68, 0.16) 		& (68, 0.19)	\\
\hline
\hline
 &\multicolumn{4}{|c|}{DCAP 243-500}\\
 \hline
No. Proc.  & \multicolumn{1}{c}{OOQP} & \multicolumn{1}{c|}{PIPS-IPM} & \multicolumn{1}{c}{SDM-GS1-ALM} & \multicolumn{1}{c|}{SDM-GS5-ALM} \\
\hline
1		& (57, 14.37)		& (57, 12.11) 	& (57, 3.72) 		& (57, 5.11)	\\
8		& (57, 5.04)			& (58, 2.12) 		& (57, 0.57) 		& (57, 0.67)	\\
16		& (57, 4.00)			& (59, 2.07) 		& (57, 0.30) 		& (57, 0.35)	\\
32		& (57, 7.26)			& (59, 1.88) 		& (57, 0.17) 		& (57, 0.20)	\\		
\hline
 \hline
& \multicolumn{4}{|c|}{DCAP 332-500}\\
 \hline
No. Proc.  & \multicolumn{1}{c}{OOQP} & \multicolumn{1}{c|}{PIPS-IPM} & \multicolumn{1}{c}{SDM-GS1-ALM} & \multicolumn{1}{c|}{SDM-GS5-ALM} \\
\hline
1		& (82, 13.51)		& (80, 9.45) 		& (82, 2.94) 		& (82, 4.85)	\\
8		& (82, 6.65)			& (79, 1.70) 		& (81, 0.43) 		& (82, 0.57)	\\
16		& (82, 5.81)			& (80, 1.89) 		& (81, 0.23) 		& (82, 0.30)	\\
32		& (82, 11.20)		& (77, 1.43) 		& (82, 0.13) 		& (82, 0.21)	\\	
\hline
 \hline
 & \multicolumn{4}{|c|}{DCAP 342-500}\\
 \hline
No. Proc.  & \multicolumn{1}{c}{OOQP} & \multicolumn{1}{c|}{PIPS-IPM} & \multicolumn{1}{c}{SDM-GS1-ALM} & \multicolumn{1}{c|}{SDM-GS5-ALM} \\
\hline
1		& (59, 14.78)			& (71, 12.07) 	& (59, 3.80) 		& (59, 5.57)	\\
8		& (59, 6.03)			& (67, 3.19) 		& (59, 0.53) 		& (59, 0.68)	\\
16		& (59, 5.46)			& (56, 2.77) 		& (59, 0.29) 		& (59, 0.36)	\\
32		& (59, 8.05)			& (62, 2.60) 		& (59, 0.17) 		& (59, 0.21)	\\	
\hline
\end{tabular}
\caption{DCAP: Reporting auxiliary data \label{TabSupp2}}
\end{footnotesize}
\end{table}

\end{document}